\let\ams@starttoc\@starttoc
\let\@starttoc\ams@starttoc
\patchcmd{\@starttoc}{\makeatletter}{\makeatletter\parskip\z@}{}{}
\numberwithin{equation}{section}
\def\Z{{\mathbb{Z}}}
\def\K{{\mathbb{K}}}
\def\A{{\mathscr{A}}}
\def\B{{\mathscr{B}}}
\newcommand\CA{{\mathscr A}}
\newcommand\BBK{{\mathbb K}}
\newcommand\Der{{\operatorname{Der}}}
\newcommand\rank{\operatorname{rank}}
\newcommand{\bbmo}{\mathbbm{1}}
\numberwithin{equation}{section}
\theoremstyle{plain}
\newtheorem{lemma}[equation]{Lemma}
\newtheorem{theorem}[equation]{Theorem}
\newtheorem{problem}[equation]{Problem}
\newtheorem{corollary}[equation]{Corollary}
\newtheorem{proposition}[equation]{Proposition}
\theoremstyle{definition}
\newtheorem{defn}[equation]{Definition}
\newtheorem{example}[equation]{Example}
\begin{document}

\title{On Universal derivations for multiarrangements}

\author[Takuro Abe]{Takuro Abe}
\address
{Department of Mathematics, Rikkyo University, Tokyo 171-8501, Japan.}
\email{abetaku@rikkyo.ac.jp}

\author[Shota Maehara]{Shota Maehara}
\address
{Joint Graduate School of Mathematics for Innovation, Kyushu University, 744 Motooka Nishi-ku Fukuoka 819-0395, Japan}
\email{maehara.shota.027@s.kyushu-u.ac.jp}

\author[G.~R\"ohrle]{Gerhard R\"ohrle}
\address
{Fakult\"at f\"ur Mathematik,
	Ruhr-Universit\"at Bochum,
	D-44780 Bochum, Germany}
\email{gerhard.roehrle@rub.de}

\author[S.~Wiesner]{Sven Wiesner}
\address
{Fakult\"at f\"ur Mathematik,
	Ruhr-Universit\"at Bochum,
	D-44780 Bochum, Germany}
\email{sven.wiesner@rub.de}

\subjclass[2020]{Primary 52C35, 32S22, 14N20}

\keywords{
	Free arrangement, 
	Free multiarrangement,
	Universal derivation}

\allowdisplaybreaks
\maketitle

\begin{abstract}
The study of universal derivations for arbitrary multiarrangements and multiplicity functions was initiated by Abe, Röhrle, Stump, and Yoshinaga in \cite{ASRY} which focused on arrangements arising from (well-generated) reflection groups. In this paper we provide a criterion for determining whether a derivation is universal along with a characterization of universal derivations for arbitrary 2-multiarrangements. As an application we give descriptions of universal derivations for several multiarrangements, including the so-called deleted $A_3$ arrangement. This is the first known example of a non-reflection arrangement that admits a universal derivation distinct from the Euler derivation.
\end{abstract}


\section{Introduction}
The freeness of hyperplane arrangements has been a central topic in the theory of arrangements for several decades. Despite extensive research, determining whether a given arrangement is free remains a challenging problem. A major open question in this area is Terao’s conjecture which asserts that the freeness of an arrangement depends only on its combinatorial structure, namely its intersection lattice. A function $m:\A \rightarrow \Z_{\ge0}$ on an arrangement $\A$ is called a \emph{multiplicity} and a pair $(\A,m)$ a \emph{multiarrangement}. An important advancement in the study of freeness was made by Yoshinaga, who established a relationship between the freeness of a central arrangement $\A$ and that of a certain multiarrangement, arising as a restriction of $\A$ to one of its hyperplanes, the so called Ziegler restriction. Consequently, the theory of multiarrangements has become a significant area of investigation in its own right.

In the special case when $\A = \A(W)$ is the reflection arrangement of a finite Coxeter group $W$, the concept of universal vector fields was introduced by Yoshinaga in \cite{Y0} to construct bases for multi-Coxeter arrangements using affine connections. They were further explored in \cite{ATW} and \cite{AY} as a tool to study the structure of the module of logarithmic derivations $D(\A, m)$ for a given multiplicity function $m: \CA\to\mathbb{Z}_{\ge0}$. A definition was later provided by Wakamiko in \cite[Def.~2.2]{W10}. Note that universal vector fields are defined only in the setting of Coxeter arrangements $\A(W)$, where the $W$-action and $W$-invariance play a fundamental role in their construction. In \cite{ASRY}, Abe, Stump, Röhrle, and Yoshinaga extended the definition of universal vector fields to universal derivations for arbitrary arrangements. 

Let $V$ be an $\ell$-dimensional vector space over a field $\K$ of characteristic zero, $S=\K[x_1,\ldots,x_\ell]$ the coordinate ring of $V$, 
and $\Der_S=\oplus_{i=1}^\ell S \cdot \partial_{x_i}$ the module of 
$S$-regular derivations. For a given multiplicity $m$ on $\A$, the \emph{module of $(\A,m)$-derivations} $D(\A,m)$ is the central object of this study, see \eqref{def:DAm}. We define the trivial multiplicity 
$\bbmo$ on $\A$ by 
$\bbmo(H) = 1$ for each $H$ in $\A$. 

\begin{defn}\label{definiton: Universal Derivations}
Let  $(\A,m)$ be a multiarrangement with fixed multiplicity $m$ and $\theta\in D(\A,m+\bbmo)$ a homogeneous derivation. Then $\theta$ 
is said to be \emph{$m$-universal (for $\A$ or $(\A,m)$)} if the map
$$
\Phi_\theta:\Der_S \to D(\A,m), \varphi \mapsto \nabla_\varphi \theta$$
is an isomorphism of $S$-modules, where for $\theta=\sum_{i=1}^\ell f_i \partial_{x_i} \in \Der_S$,  $\nabla_\varphi \theta $ is defined by
$$
\nabla_\varphi \theta\coloneqq\sum_{i=1}^\ell \varphi(f_i) \partial_{x_i}.
$$ 
\end{defn}
Note that if both $\theta$ and $\varphi$ are homogeneous, then 
$\deg(\nabla_{\varphi}\theta)=\deg \varphi + \deg \theta - 1$.
In particular, if $\theta$ is $m$-universal for $\A$, then $\Phi_\theta$ maps the $S$-basis $\{\partial_{x_i}\mid 1\leq i\leq \ell\}$ of $\Der_S$ to an $S$-basis of $D(\A,m)$. Thus $D(\A,m)$ is free with exponents $\exp(\A,m)=(d,d,\dots,d)$, where $\deg\theta=d+1$.

This notion is motivated by the Euler derivation.
Suppose $\A$ is free and irreducible. Then the Euler derivation $\theta_E = \sum_{i=1}^\ell x_i\partial_{x_i}$ belongs to $ D(\A) = D(\A,0+\bbmo)$ and $\nabla_\varphi \theta_E = \varphi$ for all $\varphi\in \Der_S$, so $\Phi_{\theta_E}$ is the identity on $\Der_S = D(\A,0)$. So the Euler derivation is $0$-universal. 

Universal derivations can be utilized to determine the structure of not only the $S$-modules $D(\A,m+\bbmo)$ and $D(\A,m)$ but also of intermediate $S$-modules. Hence an $m$-universal derivation can play an important role in the investigation of derivation modules and free arrangements. 

Our first result is a characterization of universal derivations which can be considered as a generalization of \cite[Thm.~3.4]{AN}. The notion of criticality was originally defined by Ziegler in \cite{Z2} (see Definition \ref{definition: k-critical}). It is used to derive the following criterion for universal derivations.

\begin{theorem}\label{theorem: alternative universal criterion}
Let $\A$ be an irreducible $\ell$-arrangement and $m$ a multiplicity on $\A$. 
Assume that $\theta \in D(\A,m+\bbmo)$ is homogeneous with $\deg \theta = d+1$. Then the following are equivalent:
\begin{itemize}
\item[(1)]
$\theta$ is $m$-universal;
\item[(2)]
$(\A,m)$ is free with $\exp(\A,m)=(d,\ldots,d)$ 
and $D(\A,m+\bbmo)$ is $(d+1)$-critical.
\end{itemize}
\end{theorem}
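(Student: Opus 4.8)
The plan is to analyze $\Phi_\theta$ through its coefficient matrix and to reduce the entire equivalence to a single nonvanishing condition via Saito's criterion. Write $\theta=\sum_i f_i\partial_{x_i}$ with each $f_i$ homogeneous of degree $d+1$. First I would record that $\Phi_\theta$ is a graded $S$-linear map raising degrees by $d$: from the identity $(\nabla_\varphi\theta)(\alpha_H)=\varphi(\theta(\alpha_H))$ one sees that $\theta(\alpha_H)\in(\alpha_H^{m(H)+1})$ forces $(\nabla_\varphi\theta)(\alpha_H)\in(\alpha_H^{m(H)})$, so $\nabla_\varphi\theta\in D(\A,m)$ for every $\varphi\in\Der_S$. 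Since $\Phi_\theta$ carries the basis $\partial_{x_1},\dots,\partial_{x_\ell}$ to the degree-$d$ derivations $\nabla_{\partial_{x_i}}\theta\in D(\A,m)$, it is an isomorphism exactly when these $\ell$ derivations form an $S$-basis of $D(\A,m)$. Their coefficient matrix is the Jacobian $M=(\partial_{x_i}f_j)$, and a local computation at each $H$ (in coordinates with $\alpha_H=x_1$, where $f_1\in(x_1^{m(H)+1})$ makes the corresponding column divisible by $x_1^{m(H)}$) gives $\prod_{H\in\A}\alpha_H^{m(H)}\mid\det M$. Hence, by Saito's criterion, $\Phi_\theta$ is an isomorphism iff $\det M=c\prod_{H\in\A}\alpha_H^{m(H)}$ for some nonzero scalar $c$, and this simultaneously forces $D(\A,m)$ to be free with $\exp(\A,m)=(d,\dots,d)$ (the total degrees $\ell d=\sum_{H}m(H)$ then match).

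For $(1)\Rightarrow(2)$, freeness and the balanced exponents are precisely the by-product of Saito's criterion just described. To obtain $(d+1)$-criticality I would use the lowering observation that, for any homogeneous $\eta\in D(\A,m+\bbmo)$, one has $\nabla_{\partial_{x_k}}\eta\in D(\A,m)$ with degree $\deg\eta-1$. Since $D(\A,m)$ now has minimal degree $d$, every homogeneous $\eta\in D(\A,m+\bbmo)$ of degree at most $d$ is killed by all $\nabla_{\partial_{x_k}}$, hence has constant coefficients and therefore vanishes (the case $d=0$ is excluded by irreducibility, as $D(\A)_0=0$). Thus $d+1$ is the minimal degree occurring in $D(\A,m+\bbmo)$, and $\theta$ realizes it; this establishes the minimal-degree requirement of Definition \ref{definition: k-critical}, while the remaining nondegeneracy is supplied by the condition $c\neq0$ coming from universality.

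For $(2)\Rightarrow(1)$, freeness with balanced exponents already yields $\det M=c\prod_{H\in\A}\alpha_H^{m(H)}$ with $c\in\K$ a scalar, so everything reduces to proving $c\neq0$. Expressing the images in a homogeneous basis $\eta_1,\dots,\eta_\ell$ of $D(\A,m)$ via $\nabla_{\partial_{x_i}}\theta=\sum_k c_{ik}\eta_k$ gives a factorization $M=C\cdot A$, where $A$ is the coefficient matrix of the $\eta_k$ and $C=(c_{ik})\in\K^{\ell\times\ell}$. As $\det A$ is a nonzero scalar multiple of $\prod_{H}\alpha_H^{m(H)}$, we get $c\neq0\iff\det C\neq0\iff$ there is no nonzero constant $\varphi_0\in\Der_S$ with $\nabla_{\varphi_0}\theta=0$. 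So I must rule out such a $\varphi_0$: after a linear change of coordinates $\varphi_0=\partial_{x_1}$, and $\nabla_{\partial_{x_1}}\theta=0$ means every $f_i$ is independent of $x_1$; then for each $H$ whose form $\alpha_H$ genuinely involves $x_1$, the $x_1$-free polynomial $\theta(\alpha_H)$ is divisible by $\alpha_H^{m(H)+1}$ and hence is zero, while irreducibility prevents every $\alpha_H$ from avoiding $x_1$.

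The main obstacle is exactly this last step of $(2)\Rightarrow(1)$: converting the degeneracy $\nabla_{\partial_{x_1}}\theta=0$ into a contradiction with $(d+1)$-criticality. Freeness and balancedness force the minimal degree of $D(\A,m+\bbmo)$ to be $d+1$ but do not by themselves exclude a degenerate $\theta$ in that degree, so the content of Definition \ref{definition: k-critical} must enter essentially to show that a $\theta$ with $x_1$-independent coefficients cannot be $(d+1)$-critical. I expect to handle this by a local analysis along the hyperplanes involving $x_1$, where the vanishing $\theta(\alpha_H)=0$ collapses the expected order of contact, feeding the resulting deficiency back into the criticality condition and using irreducibility to propagate the degeneracy across the whole arrangement. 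Once $c\neq0$ is secured, $\det M$ is a nonzero scalar multiple of $\prod_{H}\alpha_H^{m(H)}$, Saito's criterion shows the $\nabla_{\partial_{x_i}}\theta$ form a basis, and $\Phi_\theta$ is an isomorphism, completing the proof.
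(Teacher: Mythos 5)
Your overall architecture --- reading $\Phi_\theta$ off the Jacobian matrix $M=(\partial_{x_i}f_j)$, noting $Q(\A,m)\mid\det M$, and reducing everything to whether $\det M$ is a nonzero \emph{scalar} multiple of $Q(\A,m)$ --- is sound and is essentially the same determinant/Saito mechanism the paper uses. The step you single out as ``the main obstacle'' in $(2)\Rightarrow(1)$ is in fact already closed by what you have written: once $\nabla_{\partial_{x_1}}\theta=0$ forces $\theta(\alpha_H)=0$ for some $H$ whose defining form involves $x_1$ (such an $H$ exists because the arrangement is essential/irreducible), you have $\theta(\alpha_H)\in S\alpha_H^{(m+\bbmo+\delta_H)(H)}$ trivially, so $\theta$ is a nonzero element of $D(\A,m+\bbmo+\delta_H)_{d+1}$ --- and this graded piece is $(0)$ by the $(d+1)$-criticality assumed in (2). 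No local analysis or propagation argument is needed; the contradiction is immediate. (The paper's own $(2)\Rightarrow(1)$ is organized differently: it uses criticality to get $x_i\nmid f_i$ in coordinates with $\ker x_i\in\A$ and then checks $\K$-linear independence of the $\nabla_{\partial_{x_i}}\theta$ by an explicit computation; your route via the factorization $M=CA$ and ruling out a constant $\varphi_0$ with $\nabla_{\varphi_0}\theta=0$ is a legitimate alternative once the above is said.)

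The genuine gap is in $(1)\Rightarrow(2)$. Your $\nabla$-lowering argument correctly gives $D(\A,m+\bbmo)_{<d+1}=(0)$, but the third requirement of Definition~\ref{definition: k-critical}, namely $D(\A,m+\bbmo+\delta_H)_{d+1}=(0)$ for every $H$, does not follow from ``$c\neq0$'' alone: the determinant argument only shows that $\theta$ itself does not lie in $D(\A,m+\bbmo+\delta_H)$, and says nothing about other degree-$(d+1)$ elements of $D(\A,m+\bbmo+\delta_H)\subseteq D(\A,m+\bbmo)$. You need to know that $D(\A,m+\bbmo)_{d+1}=\K\theta$ is one-dimensional. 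The paper obtains this (Corollary~\ref{corollary: if universal then critical}) from the isomorphism $D(\A)\simeq D(\A,m+\bbmo)$, $\varphi\mapsto\nabla_\varphi\theta$, supplied by Proposition~\ref{indep}, combined with irreducibility ($D(\A)_1=\K\theta_E$); your sketch never establishes surjectivity of $\varphi\mapsto\nabla_\varphi\theta$ onto $D(\A,m+\bbmo)$, and the lowering trick cannot see degree $d+1$, since $\nabla_{\partial_{x_k}}\eta$ then lands in the nonzero space $D(\A,m)_d$. Once one-dimensionality is in hand, your $c\neq0$ argument finishes exactly as in the paper's Lemma~\ref{lemma20}.
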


Our second main result classifies universal derivations for multiarrangements of rank two. For the notion of a balanced multiplicity, see Definition \ref{definition: balanced}.

\begin{theorem}\label{theorem: rank two universal criterion}
Let $(\A,m)$ be an irreducible $2$-multiarrangement. Assume that $|\A|>3$ or 
$|\A|=3$ and $m$ is balanced. Then a homogeneous derivation $\theta \in D(\A,m+\bbmo)$ is $m$-universal if and only if $m+\bbmo$ is balanced, and $\exp(\A,m+\bbmo)=(\deg \theta,\deg \theta+|\A|-2)$.
\end{theorem}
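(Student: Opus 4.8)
The plan is to deduce Theorem~\ref{theorem: rank two universal criterion} from the general criterion of Theorem~\ref{theorem: alternative universal criterion} together with the explicit structure theory of rank-two multiarrangements. Throughout I write $n=|\A|$ and $\deg\theta=d+1$, and set $Q_m:=\prod_{H\in\A}\alpha_H^{m(H)}$. Since every rank-two multiarrangement is free, the freeness requirement in Theorem~\ref{theorem: alternative universal criterion}(2) is automatic, so $\theta$ is $m$-universal if and only if $\exp(\A,m)=(d,d)$ and $D(\A,m+\bbmo)$ is $(d+1)$-critical. First I would unwind criticality via Definition~\ref{definition: k-critical}: being $(d+1)$-critical means the minimal degree of a nonzero derivation in $D(\A,m+\bbmo)$ equals $d+1$. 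Given $\exp(\A,m)=(d,d)$ we have $|m|=2d$, hence $|m+\bbmo|=2d+n$; as the hypotheses force $n\ge 3$, the two exponents of $D(\A,m+\bbmo)$ are distinct, and minimal exponent $d+1$ yields $\exp(\A,m+\bbmo)=(d+1,\,d+n-1)=(\deg\theta,\,\deg\theta+|\A|-2)$. Thus the theorem reduces to the equivalence, under the standing hypotheses and granted $\exp(\A,m+\bbmo)=(d+1,d+n-1)$, of $\exp(\A,m)=(d,d)$ with $m+\bbmo$ being balanced.

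For the direction starting from $\exp(\A,m)=(d,d)$, I would invoke the classification of the minimal derivation of an unbalanced rank-two multiarrangement: if some weight $m(H_0)$ exceeds $|m|-m(H_0)$, then $\exp(\A,m)=(|m|-m(H_0),\,m(H_0))$ has a strictly positive gap. Hence equal exponents force $m$ to be balanced, and a one-line estimate ($2(m(H)+1)\le |m|+n$ follows from $2m(H)\le|m|=2d$ once $n\ge 2$) shows $m+\bbmo$ is balanced as well.

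The reverse implication, recovering $\exp(\A,m)=(d,d)$ from balancedness of $m+\bbmo$ and its exponents, is where the real work lies; here I would construct the universal derivation explicitly and feed it back into Theorem~\ref{theorem: alternative universal criterion}. Choose a minimal-degree basis element $\theta_1=f\partial_{x_1}+g\partial_{x_2}$ of $D(\A,m+\bbmo)$; since $d+1=e_1<e_2$, the given $\theta$ is a scalar multiple of $\theta_1$. By Saito's criterion, $\Phi_{\theta_1}$ is an isomorphism exactly when the Jacobian $J(\theta_1):=f_{x_1}g_{x_2}-f_{x_2}g_{x_1}$, the coefficient determinant of the pair $\nabla_{\partial_{x_1}}\theta_1,\nabla_{\partial_{x_2}}\theta_1\in D(\A,m)$, is a nonzero scalar multiple of $Q_m$. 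Because $\theta_1\in D(\A,m+\bbmo)$, one checks line by line that $\alpha_H^{m(H)}\mid J(\theta_1)$ for every $H$, so $Q_m\mid J(\theta_1)$; as $\deg J(\theta_1)=2d=\deg Q_m$, only the nonvanishing of $J(\theta_1)$ remains. The key structural lemma is that $J(\theta_1)=0$ forces $\theta_1=h\,\partial_v$ for a constant vector field $\partial_v$; membership in $D(\A,m+\bbmo)$ then gives $\prod_{v\notin H}\alpha_H^{(m+\bbmo)(H)}\mid h$, whence $d+1=\deg h\ge |m+\bbmo|-\max_H (m+\bbmo)(H)$. Balancedness bounds the maximal weight by $\tfrac{1}{2}(2d+n)$, and for $n\ge 3$ the resulting inequality $d+n-1\le d+n/2$ is absurd, so $J(\theta_1)\ne 0$. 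Hence $\theta$ is $m$-universal, and Theorem~\ref{theorem: alternative universal criterion} returns $\exp(\A,m)=(d,d)$.

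I expect the main obstacle to be exactly this reverse step: deleting $\bbmo$ does not by itself determine $\exp(\A,m)$, so the argument cannot avoid the explicit Jacobian/Saito analysis and the structural lemma identifying Jacobian-degenerate derivations with multiples of a single constant vector field. The hypotheses $|\A|>3$, or $|\A|=3$ with $m$ balanced, enter precisely to force $e_1<e_2$ (the equal-exponent case occurring only for $n=2$) and to keep the delicate boundary case $|\A|=3$ inside the balanced regime in which the structural lemma produces the needed contradiction.
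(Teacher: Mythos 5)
Your proposal is correct in substance, but the decisive step --- the reverse implication --- follows a genuinely different route from the paper's. The paper funnels that direction through Theorem \ref{univ}, which lives entirely inside the Abe--Numata multiplicity-lattice formalism of \cite{AN}: $m+\bbmo$ is identified as the peak point of a finite component, Lemma \ref{one} and Theorem \ref{str} control $\Delta(m+\delta_i)$ and $\Delta(m)$, and the crucial nondegeneracy is obtained by showing that $\Delta(m)\neq 0$ would force $\theta_0=\nabla_{\theta_E}\theta_0=2x_1x_2\theta'$, contradicting the $S$-independence statement of Theorem \ref{ANindependent} (with Lemma \ref{balanced} guaranteeing $\nabla_{\partial_{x_i}}\theta_0\neq 0$). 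You instead work directly with the Jacobian $J(\theta_1)=f_{x_1}g_{x_2}-f_{x_2}g_{x_1}$, observe $Q(\A,m)\mid J(\theta_1)$ with matching degrees, and rule out $J(\theta_1)=0$ via the structural fact that two homogeneous polynomials of equal degree in two variables with vanishing Jacobian are proportional (an Euler-identity argument, valid in characteristic zero), which turns $\theta_1$ into $h\,\partial_v$ and produces a degree contradiction with balancedness. This is more elementary and self-contained --- it needs none of Lemma \ref{one}, Theorem \ref{str}, or Theorem \ref{ANindependent}, and it \emph{derives} $\exp(\A,m)=(d,d)$ rather than establishing it by a separate lattice argument --- at the cost of having to prove the proportionality lemma and of leaning on characteristic zero; the paper's route reuses existing structure theory and also records the intermediate exponents $\exp(\A,m+\delta_i)$ along the way. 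Your forward direction matches the paper's in spirit (unbalancedness forces the degenerate lower-degree element $Q(\A,m+\bbmo)\alpha_H^{-m_0}\partial_y$), though you extract the exponent gap from criticality while the paper obtains it from a basis $\theta_E,\varphi$ of $D(\A)$ via Proposition \ref{indep}. Two small blemishes, neither fatal: the inequality you write as $d+n-1\le d+n/2$ should read $d+n/2\le \deg h=d+1$ (both give $n\le 2$, so the contradiction survives), and the distinctness of the exponents of $(\A,m+\bbmo)$ follows from the minimal exponent being $d+1$ together with $|m+\bbmo|=2d+n$, not from $n\ge 3$ alone.
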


The following example illustrates why we require $\theta\in D(\A,m+\bbmo)$ instead of merely allowing that $\theta$ belongs to $\Der_S$ in Theorem \ref{theorem: rank two universal criterion}. 
Here and subsequently, for a fixed $H\in\A$ we define the indicator multiplicity 
$\delta_H:\A\to \{0,1\}$  by 
    $$\delta_H(H')= \left\{
\begin{array}{rl}
0 & \mbox{if}\ H' \neq H,\\
1 & \mbox{if}\ H' = H.
\end{array}
\right.$$

\begin{example}
    Let $\A$ be the Coxeter arrangement of type $B_2$ with defining polynomial $$Q(\A)=xy(x-y)(x+y).$$
    Consider the 2-multiarrangment $(\A,m+\bbmo)$ given by 
    $$Q(\A,m+\bbmo)=x^3y^5(x-y)^2(x+y)^2.$$
    Then $\exp(\A,m+\bbmo)=(5,7)$ and $\exp(\A,m)=(4,4)$. 
    Moreover, $\exp(\A,m+\bbmo+\delta_H)=(6,7)$ for every $H$ in $\A$.
    Let $\theta\in D(\A,m+\bbmo)$ be homogeneous with $\deg\theta=5$. Note that $\theta$ is unique up to multiplication by a non-zero scalar and is $m$-universal. If however $\theta$ is only required to belong to $\Der_S$ instead of $D(\A,m+\bbmo)$, then every derivation of degree $5$ in $\Der_S$ satisfies the conditions in Theorem \ref{theorem: rank two universal criterion}, even if $\Phi_\theta$ is not an isomorphism.
\end{example}

We utilize Theorem \ref{theorem: alternative universal criterion} to give examples of universal derivations. This includes a classification of the latter on the deleted $A_3$ arrangement which marks the first existence result of that kind  for arrangements not stemming from a (well-generated) reflection group.

\begin{theorem}\label{theorem: Multieuler for deleted A3}
    Let $\A$ be the deleted $A_3$ arrangement with defining polynomial $$Q(\CA)=(y-z)y(x-y)x(x-z).$$
    Consider the multiarrangment $(\A,m+\bbmo)$ given by 
    $$Q(\A,m+\bbmo)=(y-z)^ay^b(x-y)^cx^d(x-z)^e.$$ There exists an $m$-universal $\theta\in D(\A,m+\bbmo)$ if and only if $c = a+e-1 = b+d-1$.
\end{theorem}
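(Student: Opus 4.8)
The plan is to work with the reduced multiplicity $m$, abbreviating its values as $\alpha=a-1$, $\beta=b-1$, $\gamma=c-1$, $\delta=d-1$, $\epsilon=e-1$ on $H_1=\{y-z\}$, $H_2=\{y\}$, $H_3=\{x-y\}$, $H_4=\{x\}$, $H_5=\{x-z\}$. A direct check shows that $\A$ has exactly two triple points, $X_1=H_1\cap H_3\cap H_5$ (the line $x=y=z$) and $X_2=H_2\cap H_3\cap H_4$ (the line $x=y=0$), together with the four double points $H_1\cap H_2$, $H_1\cap H_4$, $H_2\cap H_5$, $H_4\cap H_5$; note that $H_3$ is the unique hyperplane on both triple points. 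In these terms the asserted condition $c=a+e-1=b+d-1$ reads $\gamma=\alpha+\epsilon=\beta+\delta$, i.e.\ at each triple point the multiplicity of $H_3$ equals the sum of the other two. Since $\Phi_\theta$ is $S$-linear and raises degree by $\deg\theta-1$, an isomorphism $\Phi_\theta$ forces $D(\A,m)$ to be free with $\exp(\A,m)=(\deg\theta-1,\deg\theta-1,\deg\theta-1)$; thus both implications amount to comparing this balanced freeness with the numerical condition.

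For the implication ``$\Leftarrow$'' I construct a universal $\theta$ explicitly. Writing $\theta=P\partial_x+Q\partial_y+R\partial_z$ of degree $c$, membership in $D(\A,m+\bbmo)$ requires $x^d\mid P$, $y^b\mid Q$, $(x-y)^c\mid P-Q$, $(x-z)^e\mid P-R$ and $(y-z)^a\mid Q-R$; as $\deg(P-Q)=c$, the third condition means $P-Q=\lambda(x-y)^c$. Using $c=b+d-1$, I split $(x-y)^c=\sum_{i=0}^{c}\binom{c}{i}x^i(-y)^{c-i}$ at $i=d$: the terms with $i\ge d$ are divisible by $x^d$ and define $P$, while those with $i\le d-1$ have $y$-degree $\ge b$ and define $-Q$; this gives $P\in x^d\K[x,y]$, $Q\in y^b\K[x,y]$ with $P-Q=(x-y)^c$. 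Finally, because $(x-z)^e$ and $(y-z)^a$ are coprime and $a+e=c+1$, graded Chinese remaindering produces a unique homogeneous $R$ of degree $c$ with $R\equiv P\ \bmod\ (x-z)^e$ and $R\equiv Q\ \bmod\ (y-z)^a$. By construction $\theta\in D(\A,m+\bbmo)$.

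It remains to show $\Phi_\theta$ is an isomorphism. The images $\theta_j:=\nabla_{\partial_{x_j}}\theta$ lie in $D(\A,m)$, and their coefficient matrix is the Jacobian of $(P,Q,R)$; since $\theta_j\in D(\A,m)$, its determinant is divisible by $(y-z)^{\alpha}y^{\beta}(x-y)^{\gamma}x^{\delta}(x-z)^{\epsilon}$, and the two sides have equal degree $3(c-1)$, so the determinant equals a scalar $\kappa$ times this product. By Saito's criterion $\Phi_\theta$ is an isomorphism iff $\kappa\ne0$, i.e.\ iff no nonzero $\partial=f\partial_x+g\partial_y+h\partial_z$ with $f,g,h\in S$ annihilates $P,Q,R$. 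If such a $\partial$ exists, then $\partial(P-Q)=c(x-y)^{c-1}(f-g)=0$ gives $f=g$, whence $\partial P=f(\partial_x+\partial_y)P=0$; but $x^d\mid P$ with $d\ge1$ forbids $P\propto(x-y)^c$, so $(\partial_x+\partial_y)P\ne0$ and hence $f=g=0$. Then $\partial=h\partial_z$, and since $R\notin\K[x,y]$ (otherwise $R=P=Q$, contradicting $P-Q=(x-y)^c$) we have $\partial_zR\ne0$, forcing $h=0$. Thus $\kappa\ne0$, $\Phi_\theta$ is an isomorphism, and $\theta$ is $m$-universal.

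For the implication ``$\Rightarrow$'', suppose an $m$-universal $\theta$ exists. By Theorem~\ref{theorem: alternative universal criterion}, $(\A,m)$ is free with balanced exponents $(k,k,k)$, so $3k=|m|=u+v+\gamma$, where $u:=\alpha+\epsilon$ and $v:=\beta+\delta$. The key input is the local--global formula $\sum_{i<j}e_ie_j=\sum_{X}d_1^{X}d_2^{X}$ for free rank-three multiarrangements, summing over rank-two flats with $(d_1^{X},d_2^{X})=\exp(\A_X,m_X)$. The four double points contribute $\alpha\beta+\alpha\delta+\beta\epsilon+\delta\epsilon=(\alpha+\epsilon)(\beta+\delta)=uv$, while the inclusions $D(\A,m)\subseteq D(\A_{X_i},m_{X_i})$ of free modules of equal rank force $d_2^{X_i}\le k$, placing each triple point in the regime where $H_3$ dominates, so that $d_1^{X_1}d_2^{X_1}=u\gamma$ and $d_1^{X_2}d_2^{X_2}=v\gamma$. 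Substituting into $3k^2=u\gamma+v\gamma+uv$ and using $3k=u+v+\gamma$ yields $\tfrac12\big[(u-v)^2+(u-\gamma)^2+(v-\gamma)^2\big]=0$, hence $u=v=\gamma$, which is exactly $c=a+e-1=b+d-1$. The main obstacle is precisely this backward bookkeeping: one must justify rigorously, via the bound $d_2^{X_i}\le k$ together with the explicit exponent formula for three concurrent lines, that both triple points fall into the balanced boundary regime so that the products $u\gamma$, $v\gamma$ are the correct local values and no non-balanced solutions intrude; the forward construction, by contrast, is entirely explicit, its only delicate point—the non-vanishing of $\kappa$—being dispatched by the short syzygy argument above.
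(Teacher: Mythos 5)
Your ``$\Leftarrow$'' direction is correct and takes a genuinely different route from the paper. The paper obtains existence by checking that $(\A,m)$, $(\A,m+\bbmo)$ and every $(\A,m+\bbmo+\delta_H)$ remain supersolvable, reading off their exponents from Theorem \ref{theorem: supersolvable exponents} to establish $c$-criticality, and then invoking Theorem \ref{theorem: alternative universal criterion}; you instead build $\theta=P\partial_x+Q\partial_y+R\partial_z$ explicitly by splitting $(x-y)^c$ at $i=d$ and solving the two congruences for $R$, and certify universality via Saito's criterion together with the syzygy argument for $\kappa\neq0$. This is more self-contained (it uses neither Theorem \ref{theorem: del A3 classification} nor the addition--deletion machinery) and it actually exhibits the derivation. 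One point should be made explicit: the ideals $\bigl((x-z)^e\bigr)$ and $\bigl((y-z)^a\bigr)$ are coprime but not comaximal, so ``Chinese remaindering'' is not automatic; the congruences for $R$ are solvable because $P-Q=(x-y)^c=\bigl((x-z)-(y-z)\bigr)^c$ lies in $\bigl((x-z)^e\bigr)+\bigl((y-z)^a\bigr)$, by the same binomial splitting and again using $c=a+e-1$.

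The ``$\Rightarrow$'' direction has a genuine gap, and it sits exactly where you flagged it: the bound $d_2^{X_i}\le k$ does \emph{not} place the triple points in the regime where $H_3$ dominates. Take $(a,b,c,d,e)=(3,4,4,3,3)$, so $m=(2,3,3,2,2)$. Then $(\A,m)$ is supersolvable for the filtration $\{\ker x\}\subset\{\ker x,\ker y,\ker(x-y)\}\subset\A$ (the only condition is $3\ge 2+2-1$), and Theorems \ref{theorem: supersolvable exponents} and \ref{theorem: Wakamiko A2 classification} give $\exp(\A,m)=(4,4,4)$, i.e.\ $k=4$, $u=4$, $v=5$, $\gamma=3$. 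At $X_2$ the local multiplicities $(3,3,2)$ are balanced, so $\exp(\A_{X_2},m_{X_2})=(4,4)$: your inequality $d_2^{X_2}\le k$ holds (with equality), yet $d_1^{X_2}d_2^{X_2}=16\neq v\gamma=15$, and the global count still closes as $20+12+16=48=3k^2$. Since this $m$ satisfies every hypothesis your backward argument actually uses while $c\neq a+e-1$, the step cannot be repaired by more careful bookkeeping: freeness of $(\A,m)$ with equal exponents is strictly weaker than the asserted numerical condition. To get necessity you must feed in more of the universality hypothesis. For instance, Proposition \ref{indep} gives $D(\A)\simeq D(\A,m+\bbmo)$, hence $(\A,m+\bbmo)$ is free with $\exp(\A,m+\bbmo)=(k+1,k+2,k+2)$; Theorem \ref{theorem: del A3 classification} then forces $c\ge a+e-1$ or $c\ge b+d-1$, and this combined with $\exp(\A,m)=(k,k,k)$ does pin down $c=a+e-1=b+d-1$ (in the example above $(\A,m+\bbmo)$ is not even free). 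Without some such additional input the ``only if'' statement is not a consequence of what you have written.
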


We also derive a classification of universal derivations for the Braid arrangement of rank three equipped with a supersolvable multiplicity. For the latter see Definition \ref{definition: supersolvable multi}.

\begin{theorem}\label{A3multiEuler}
Let $(\A,m+\bbmo)$ be a Coxeter multiarrangement of type $A_3$. If $(\A,m+\bbmo)$ is supersolvable, then there exists an $m$-universal $\theta \in D(\A,m+\bbmo)$ if and only if all of the inequalities in Theorem \ref{theorem: supersolvable exponents} are identities.
\end{theorem}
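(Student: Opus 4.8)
The plan is to deduce Theorem~\ref{A3multiEuler} from the general criterion of Theorem~\ref{theorem: alternative universal criterion} combined with the structural description of supersolvable $A_3$-multiarrangements furnished by Theorem~\ref{theorem: supersolvable exponents}. By Theorem~\ref{theorem: alternative universal criterion}, an $m$-universal $\theta \in D(\A,m+\bbmo)$ exists if and only if $(\A,m)$ is free with balanced exponents $\exp(\A,m)=(d,d,d)$ and $D(\A,m+\bbmo)$ is $(d+1)$-critical, where $\deg\theta = d+1$. So the goal reduces to translating both of these conditions into statements about the multiplicity $m$ and matching them against the inequalities in Theorem~\ref{theorem: supersolvable exponents}. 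First I would invoke supersolvability: for a supersolvable arrangement the freeness of $(\A,m)$ and $(\A,m+\bbmo)$ is automatic (multiarrangements over supersolvable arrangements are free by the inductive/divisional structure), and Theorem~\ref{theorem: supersolvable exponents} should give explicit formulas for $\exp(\A,m+\bbmo)$ in terms of the values of $m+\bbmo$ on the hyperplanes through the modular flag, together with a chain of inequalities that those exponents satisfy.

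The key computation is then to see exactly when the three exponents of $(\A,m)$ coincide. The $(d+1)$-criticality of $D(\A,m+\bbmo)$ forces a rigid shape on $\exp(\A,m+\bbmo)$: adding $\bbmo$ to a multiplicity with $\exp(\A,m)=(d,d,d)$ produces a module whose minimal-degree part in each "coordinate direction" must reach the critical degree $d+1$ simultaneously. I would make this precise by relating $\exp(\A,m)$ and $\exp(\A,m+\bbmo)$ through the addition–deletion or Euler-multiplicity machinery for multiarrangements, expressing the jump in exponents when one passes from $m$ to $m+\bbmo$ in terms of the supersolvable formulas. The balanced condition $\exp(\A,m)=(d,d,d)$ and the criticality of $D(\A,m+\bbmo)$ should each convert into the vanishing of certain differences among the quantities appearing in Theorem~\ref{theorem: supersolvable exponents}, and these differences are precisely the slacks in the inequalities stated there. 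Thus all inequalities becoming identities is equivalent to the simultaneous balancedness and criticality demanded by Theorem~\ref{theorem: alternative universal criterion}.

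Concretely, the forward direction would run: assuming an $m$-universal $\theta$ exists, Theorem~\ref{theorem: alternative universal criterion} gives $\exp(\A,m)=(d,d,d)$ and $(d+1)$-criticality of $D(\A,m+\bbmo)$; feeding the supersolvable exponent formulas into these two constraints and cancelling yields that each inequality in Theorem~\ref{theorem: supersolvable exponents} is sharp. For the converse, assuming all those inequalities are identities, I would read off from the formulas that $\exp(\A,m)$ is balanced and that $D(\A,m+\bbmo)$ has the critical exponent profile, then apply the equivalence (2)$\Rightarrow$(1) of Theorem~\ref{theorem: alternative universal criterion} to produce the $m$-universal derivation. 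Throughout one must check that the $A_3$ arrangement is irreducible, so that Theorem~\ref{theorem: alternative universal criterion} applies, which is immediate since the braid arrangement of rank three is irreducible.

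I expect the main obstacle to be the bookkeeping that links the two seemingly different invariants---balancedness of $\exp(\A,m)$ on one side and $(d+1)$-criticality of $D(\A,m+\bbmo)$ on the other---to the \emph{same} set of inequalities in Theorem~\ref{theorem: supersolvable exponents}. In particular, one needs to verify that the shift by $\bbmo$ interacts with the supersolvable exponent formula in a way that makes the criticality condition redundant with, or exactly complementary to, the balancedness condition, so that demanding both is equivalent to saturating every inequality rather than only some of them. Handling the boundary behaviour of the supersolvable formulas (where one of the exponents could in principle be forced by a max/min that is not attained) is the delicate point that the careful reading of Theorem~\ref{theorem: supersolvable exponents} must resolve.
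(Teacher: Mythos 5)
Your overall strategy coincides with the paper's: reduce to Theorem \ref{theorem: alternative universal criterion}, plug in the exponent formulas of Theorem \ref{theorem: supersolvable exponents}, and in the forward direction sum the three inequalities $a\geq c+e-1$, $b\geq c+f-1$, $d\geq e+f-1$ against the single equality forced by $\exp(\A,m)=(d,d,d)$ to conclude each is sharp. That part of your plan is sound. But there are two gaps. First, your assertion that freeness of $(\A,m)$ and $(\A,m+\bbmo)$ is ``automatic'' for supersolvable arrangements is false: the braid arrangement of rank three is supersolvable yet carries non-free multiplicities (this is exactly why \cite{DFMS} is cited). Freeness of $(\A,m+\bbmo)$ here comes from the \emph{hypothesis} that the multiarrangement satisfies the inequalities of Theorem \ref{theorem: supersolvable exponents}, and freeness of $(\A,m)$ in the forward direction is a \emph{consequence} of universality via Theorem \ref{theorem: alternative universal criterion}, not an input.

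The more serious gap is in the converse. Criticality demands $D(\A,m+\bbmo+\delta_H)_{d+1}=(0)$ for \emph{every} $H\in\A$, and your plan to ``read off the critical exponent profile from the formulas'' only works for the three hyperplanes in the rank-two piece $\A_2$ of the filtration, where adding $\delta_H$ preserves the supersolvability inequalities. For the three hyperplanes $H\in\A\setminus\A_2$, the multiarrangement $(\A,m+\bbmo+\delta_H)$ is \emph{not} supersolvable for that filtration --- Lemma \ref{lemma: breaking supersolvability for Multi-Euler} shows this failure is forced whenever a universal derivation exists --- so Theorem \ref{theorem: supersolvable exponents} gives you nothing there. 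The paper closes this by an explicit computation: it determines the Euler restriction $(m+\bbmo+\delta_H)^*$ on $\A^H$ localization by localization (using the identities $a=c+e-1$, etc., to evaluate the rank-two Euler multiplicities), applies Theorem \ref{theorem: Wakamiko A2 classification} to get $\exp\bigl(\A^H,(m+\bbmo+\delta_H)^*\bigr)=\bigl(\tfrac{a+b+d+1}{2},\tfrac{a+b+d+3}{2}\bigr)$, and deduces that $\exp(\A,m+\bbmo+\delta_H)$ avoids the critical degree. This Euler-restriction computation is the technical core of the proof and is the step your proposal names as ``the delicate point'' without actually supplying.
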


It is apparent from Theorem \ref{theorem: alternative universal criterion} that if $m\not \equiv 0$ there do not exist $m$-universal derivations for totally non-free arrangements. Our next result shows that a free multiarrangement $(\A,m)$ with $m\not \equiv 0$ need not admit an $m$-universal derivation in general.

\begin{theorem}\label{theorem: classification universal for X3}
Let $\A$ be the $X_3$-arrangement defined by 
$$ Q(\A)=xyz(x+y)(y+z)(x+z).$$
 Then $\A$ does not admit an $m$-universal derivation for any multiplicity $m\not \equiv 0$ on $\A$. 
\end{theorem}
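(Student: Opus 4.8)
The plan is to use Theorem~\ref{theorem: alternative universal criterion} to reduce the statement to a question about criticality, and then to produce an explicit obstruction for each nonzero multiplicity. By that criterion, an $m$-universal derivation exists only if $(\A,m)$ is free with balanced exponents $\exp(\A,m)=(d,d,d)$ and $D(\A,m+\bbmo)$ is $(d+1)$-critical. So the first step is to understand the freeness and exponents of multiplicities on the $X_3$-arrangement. The arrangement $\A$ has six hyperplanes in rank three, arranged so that through each of the seven vertices of the associated intersection lattice we can track which pairs of hyperplanes meet. A key structural fact I would establish is that the Ziegler restriction of $(\A,m)$ to each hyperplane $H$ is a rank-two multiarrangement whose exponents are controlled by Theorem~\ref{theorem: rank two universal criterion}; this lets me use the rank-two theory as a local constraint on $m$.

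First I would reduce to the case where $(\A,m)$ is actually free with $\exp(\A,m)=(d,d,d)$, since otherwise criterion (2) already fails and no $m$-universal derivation can exist. For the remaining case, the idea is to exploit the combinatorics of $X_3$: the six hyperplanes split into pairs or triples meeting along various lines, so a balanced-exponent requirement forces strong numerical conditions on the values $m(H)$. The crucial obstruction I expect to exploit is the failure of criticality. Recall from Definition~\ref{definition: k-critical} that $(d+1)$-criticality of $D(\A,m+\bbmo)$ means that the smallest degree in which $D(\A,m+\bbmo)$ acquires new generators beyond the obvious multiples is exactly $d+1$, with the right multiplicity; the plan is to show that for $X_3$ with any $m\not\equiv 0$, either the module is not free in the required shape, or even when it is free the degree-$(d+1)$ piece of $D(\A,m+\bbmo)$ is too large (so the map $\Phi_\theta$ cannot be surjective) or too small (so it cannot be injective).

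Concretely, I would compute the Hilbert series of $D(\A,m)$ and $D(\A,m+\bbmo)$ using the structure of the $X_3$ arrangement together with the local Ziegler data, and compare the dimension of the degree-$(d+1)$ component of $D(\A,m+\bbmo)$ against the value $\ell=3$ demanded by $(d+1)$-criticality. The heart of the argument is to show that the added trivial multiplicity $\bbmo$ interacts with the non-generic triple points of $X_3$ in a way that breaks the delicate dimension count needed for criticality whenever $m$ is nonzero. I would organize this as a case analysis over which hyperplanes carry strictly positive multiplicity, using in each case either a direct exhibition of an extra low-degree derivation or an Euler-type relation that prevents the exponents of $(\A,m)$ from all coinciding.

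The hard part will be the criticality step: establishing that $D(\A,m+\bbmo)$ fails to be $(d+1)$-critical requires controlling the minimal generating degrees of the derivation module very precisely, and $X_3$ is not free for generic multiplicities, so I cannot simply read off exponents from a recursive addition-deletion argument. I anticipate needing an explicit basis (or a generating set with an explicit syzygy) for $D(\A,m+\bbmo)$ in the free cases, likely obtained via a multiple addition or Ziegler restriction computation tailored to the $X_3$ combinatorics, and the main subtlety is verifying that no choice of $m\not\equiv0$ simultaneously achieves freeness with balanced exponents \emph{and} the exact critical degree count.
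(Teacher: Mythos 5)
There is a genuine gap: your write-up is a roadmap that defers exactly the two steps that constitute the proof. First, you never determine which multiplicities on $X_3$ are free with all exponents equal, which is the indispensable input once Theorem \ref{theorem: alternative universal criterion} (or already Definition \ref{definiton: Universal Derivations}) has reduced the problem to that situation. The paper does not re-derive this; it quotes the classification of DiPasquale and Wakefield, by which $(\A,m)$ is free if and only if $Q(\A,m)=x^{2n}y^{2n}z^{2n}(x+y)(y+z)(x+z)$, and then $\exp(\A,m)=(2n+1,2n+1,2n+1)$. Your proposed substitute --- Hilbert series computations for $D(\A,m)$ and $D(\A,m+\bbmo)$ together with a case analysis over which hyperplanes carry positive multiplicity, with Ziegler restrictions and Theorem \ref{theorem: rank two universal criterion} as ``local constraints'' --- would amount to reproving that classification from scratch; this is a substantial result in its own right (the subject of an entire paper), and you give no concrete mechanism for closing the case analysis, especially since $D(\A,m)$ is not free for most $m$ and its Hilbert series is not accessible by the recursive tools you mention.

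Second, even granting the classification, the obstruction you anticipate is not the one that occurs, and the decisive computation is absent. You expect the degree-$(d+1)$ piece of $D(\A,m+\bbmo)$ to be ``too large'' or ``too small'' for a criticality count. In fact, for the free multiplicities above one writes the general element of $D(\A,m+\bbmo)_{2n+2}$ as
$$\theta=x^{2n+1}\ell_1\,\partial_x+y^{2n+1}\ell_2\,\partial_y+z^{2n+1}\ell_3\,\partial_z$$
with $\ell_1,\ell_2,\ell_3$ linear forms (this shape is forced by $\theta(x)\in Sx^{2n+1}$, $\theta(y)\in Sy^{2n+1}$, $\theta(z)\in Sz^{2n+1}$ and the degree), and then checks directly that the remaining conditions $\theta(x+y)\in S(x+y)^2$, $\theta(y+z)\in S(y+z)^2$, $\theta(x+z)\in S(x+z)^2$ force $\ell_1=\ell_2=\ell_3=0$. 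So for $m\not\equiv 0$ there is no candidate derivation of the required degree at all, never mind one for which $\Phi_\theta$ fails to be bijective. This explicit linear-algebra verification is the heart of the argument, and your proposal only announces the intention to find ``an explicit basis or an obstruction'' without producing either.
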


An arrangement $\CA$ is called \emph{totally free} if $(\CA,m)$ is free for every multiplicity $m:\CA\to\mathbb{Z}_{\geq 0}$. While a free arrangement $\CA$ is $0$-universal with universal derivation $\theta_E\in D(\CA)$, our final result shows that freeness does not imply the existence of a universal derivation $\theta\neq\theta_E$. 

\begin{theorem}\label{theorem: freeness does not imply universal}
    Let $\CA$ be a totally free arrangement. Then there need not exist an $m$-universal derivation $\theta\neq\theta_E$ for any multiplicity $m\not \equiv 0$. 
\end{theorem}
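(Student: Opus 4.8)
The statement is an existence claim about a counterexample, so it suffices to exhibit a single totally free arrangement $\CA$ admitting no $m$-universal derivation $\theta\neq\theta_E$ for any $m\not\equiv0$. First I would observe that the restriction $\theta\neq\theta_E$ is automatic here: the Euler derivation has degree $1$ and lies in $D(\CA,m+\bbmo)$ only when $m+\bbmo=\bbmo$, that is, when $m\equiv0$, so for $m\not\equiv0$ every $m$-universal derivation necessarily differs from $\theta_E$. Thus the task reduces to producing a totally free $\CA$ for which \emph{no} multiplicity $m\not\equiv0$ admits an $m$-universal derivation at all. The natural place to look is rank two, since every rank-two multiarrangement is free by Ziegler's theorem, so any such $\CA$ is automatically totally free, and Theorem~\ref{theorem: rank two universal criterion} gives complete control over universal derivations. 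Accordingly I would take $\CA$ to be a suitable irreducible arrangement of $n\geq4$ lines.

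With this choice the hypothesis $|\CA|>3$ of Theorem~\ref{theorem: rank two universal criterion} holds, and that theorem says an $m$-universal $\theta$ exists precisely when $m+\bbmo$ is balanced and $\exp(\CA,m+\bbmo)=(\deg\theta,\deg\theta+n-2)$. Since the two exponents always sum to $|m|+n$, the second condition is equivalent to the \emph{exponent gap} of $(\CA,m+\bbmo)$ being exactly $n-2$, which then also pins down $\deg\theta$. Hence the whole problem becomes combinatorial: I must show that for the chosen configuration the only balanced multiplicity $\mu=m+\bbmo\geq\bbmo$ whose two exponents differ by $n-2$ is $\mu=\bbmo$ itself. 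It is worth recording that $\mu=\bbmo$ does realize the gap $n-2$, because the simple arrangement has $\exp(\CA)=(1,n-1)$; this is exactly the degenerate case $m\equiv0$ corresponding to $\theta_E$, and the content of the argument is to exclude every nontrivial $\mu$.

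The crux is therefore a precise control of the exponents of rank-two multiarrangements. Using the rank-two exponent theory of Ziegler and Wakamiko, I would argue that the extremal gap $n-2$ is attained only at $\bbmo$, by showing that raising any coordinate of a balanced $\mu$ away from $\bbmo$ forces the exponents to become strictly more balanced, so that the gap drops below $n-2$. This is where the choice of configuration genuinely enters, since for $n\geq4$ the exponents of a rank-two multiarrangement are not determined by the multiplicities alone but depend on the cross-ratios of the lines; one selects $\CA$, for instance a sufficiently generic configuration, so that no nontrivial balanced multiplicity attains the extremal gap. I expect this exponent estimate---bounding the gap as a function of $\mu$ and establishing its strict decrease off $\bbmo$---to be the main obstacle, while the reduction through Theorem~\ref{theorem: rank two universal criterion} and the verification of total freeness are immediate.
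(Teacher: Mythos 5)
Your reduction is exactly the one the paper uses: work in rank two so that total freeness is automatic by Ziegler, note that $\theta_E\notin D(\CA,m+\bbmo)$ once $m\not\equiv 0$, and invoke Theorem \ref{theorem: rank two universal criterion} to turn the question into whether any balanced $\mu=m+\bbmo\neq\bbmo$ attains the extremal exponent gap $|\CA|-2$. Up to that point the argument is sound.

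However, the proposal stops precisely where the actual mathematical content lies, and the heuristic you offer in its place is not correct. You assert that one can choose a ``sufficiently generic'' configuration of $n\geq 4$ lines and that ``raising any coordinate of a balanced $\mu$ away from $\bbmo$ forces the exponents to become strictly more balanced.'' That mechanism fails in general: by Theorem \ref{str} the balanced part of the multiplicity lattice decomposes into finite components each with its own peak point realizing the maximal gap, and there is no a priori reason the component of $\bbmo$ is the only one whose peak attains $\Delta=n-2$. Indeed, for $\CA(B_2)$ (four lines) the multiplicity $x^3y^5(x-y)^2(x+y)^2$ of Example \ref{ex:a2}'s flavour --- see the example following Theorem \ref{theorem: rank two universal criterion} --- is balanced, is not $\bbmo$, and has exponents $(5,7)$, hence gap $2=|\CA|-2$; so your monotonicity claim is false for at least one $4$-line arrangement, and ``genericity'' must be made precise and proved, not merely invoked. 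The paper closes this gap by taking the specific arrangement $Q(\CA)=xy(x-y)(x-\pi y)$ and citing Maehara's theorem that every balanced $m\neq\bbmo$ on it satisfies $\Delta(m)\in\{0,1\}$, which immediately rules out the gap $2$ required by Theorem \ref{theorem: rank two universal criterion}. Without either that citation or a self-contained proof of an analogous exponent bound for a concrete arrangement, your argument does not establish the theorem.
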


The organization of this article is as follows. Section \ref{sect: preliminaries} contains preparatory material. In Section \ref{sect: main result proof} we prove Theorems \ref{theorem: alternative universal criterion} and \ref{theorem: rank two universal criterion}. 
In Section \ref{sect: examples} we give numerous new examples of $m$-universal derivations, mainly stemming from supersolvable multiarrangements.
\medskip

\textbf{Acknowledgements}. 
T.~Abe was partially supported by 
JSPS KAKENHI grant numbers 
JP23K17298 and 25H00399. 
S.~Maehara was supported by WISE program (MEXT) at Kyushu University. 
This work was also supported by DFG-Grant
RO 1072/21-1 (DFG Project number 494889912) to G.~R\"ohrle.


\section{Preliminaries}\label{sect: preliminaries}
A \emph{hyperplane arrangement} $\A$ is 
a finite collection of hyperplanes in $V$. An arrangement $\A$ is called \emph{central} if every $H \in \A$ is linear, and \emph{essential} if $\bigcap_{H \in \A} H$ is the origin. 
For each $H \in \A$ we fix a linear form $\alpha_H \in V^*$ such that $\ker \alpha_H=H$ and define $Q(\A)\coloneqq\prod_{H \in \A} \alpha_H$. In this article 
we assume that all arrangements are essential unless otherwise specified. 

Let $m:\CA\to\mathbb{Z}_{\geq 0}$ be a multiplicity on $\CA$. Define the order of $m$ as $|m|\coloneqq\sum_{H \in \A}m(H)$ and define $Q(\A,m)\coloneqq\prod_{H \in \A} \alpha_H^{m(H)}$. 

If $m = \bbmo$, we call $(\A,m)$ a \emph{simple arrangement}. 
For a central multiarrangement $(\A,m)$, we can define the \emph{module of logarithmic $(\A,m)$-derivations} $D(\A,m)$ by 
\begin{equation}
    \label{def:DAm}
    D(\A,m)\coloneqq\{\theta \in \Der_S\mid 
\theta(\alpha_H) \in S\alpha_H^{m(H)}\ (\forall H \in \A)\}.
\end{equation}
Then $D(\A,m)$ is an $S$-graded reflexive module of rank $\ell$. Define $D(\A,m)_k=\{\theta\in D(\A,m)\mid \deg\theta=k\}$ and $D(\A,m)_{<k}=\{\theta\in D(\A,m) \mid \deg\theta<k\}$.
When $D(\A,m)$ is a free module of rank $\ell$, we say that 
$(\A,m)$ is \emph{free} and for a homogeneous basis $\theta_1,\ldots,
\theta_\ell$, we define the \emph{exponents of $(\CA,m)$} by
$$
\exp(\A,m)\coloneqq(\deg \theta_1,\ldots,\deg \theta_\ell),
$$
where $\deg \theta\coloneqq\deg \theta(\alpha)$ for some $\alpha \in V^*$ such that 
$\theta(\alpha)\neq 0$. 
When $m = \bbmo$, 
the module $D(\A,\bbmo)$ is also denoted by $D(\A)$. 

The notion of criticality was  originally defined by Ziegler in \cite{Z2} for the $S$-module of logarithmic differential forms $\Omega^1(\A)$, which is isomorphic to the dual $D(\A)^*$ of the logarithmic derivation module.

\begin{defn}\label{definition: k-critical}
We say that $(\A,m)$ is \emph{$k$-critical} if $D(\A,m)_{k}\neq (0),\ 
D(\A,m)_{<k}=(0)$ and for any $H \in \A$, $D(\A,m+\delta_H)_{k}=(0).$
\end{defn}

The following results are fundamental in this article, see \cite{Sa} and \cite{Z}.

\begin{theorem}[Saito's criterion]\label{Saito}
Let $\theta_1,\ldots,\theta_\ell$ be homogeneous derivations in $D(\A,m)$. 
Then $\A$ is free with basis $\{\theta_1,\ldots,\theta_\ell\}$ if and only if $\{
\theta_1,\ldots,\theta_\ell\}$ is $S$-independent and $\sum_{i=1}^\ell \deg \theta_i=|m|$.

Equivalently, $\A$ is free with basis $\{\theta_1,\ldots,\theta_\ell\}$ if and only if
$\det (\theta_i(x_j)) = c\cdot Q(\A,m)$, where $c \in \BBK\setminus \{0\}$.
\end{theorem}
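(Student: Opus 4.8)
The plan is to isolate two ingredients and then combine them. The first is a divisibility lemma: for any homogeneous $\theta_1,\ldots,\theta_\ell\in D(\A,m)$, the determinant $\det M$ of the coefficient matrix $M=(\theta_i(x_j))$ is divisible by $Q(\A,m)$ in $S$. To see this I would fix $H\in\A$, write $\alpha_H=\sum_j c_j x_j$ with some $c_{j_0}\neq 0$, and perform the invertible column operation replacing column $j_0$ of $M$ by the vector $\bigl(\theta_i(\alpha_H)\bigr)_i=\sum_j c_j(\text{column }j)$. By the defining property \eqref{def:DAm} every entry of this new column lies in $S\alpha_H^{m(H)}$, so $\alpha_H^{m(H)}\mid\det M$. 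Since $S$ is a UFD and the $\alpha_H$ are pairwise non-proportional, the factors $\alpha_H^{m(H)}$ are pairwise coprime, whence their product $Q(\A,m)$ divides $\det M$.

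Next I would record that $S$-independence of $\{\theta_i\}$ is equivalent to $\det M\neq 0$, since $\theta_i=\sum_j\theta_i(x_j)\partial_{x_j}$ expresses each $\theta_i$ in the free $S$-basis $\{\partial_{x_j}\}$ of $\Der_S$. Granting independence, $\det M$ is a nonzero homogeneous polynomial of degree $\sum_i\deg\theta_i$, and by the divisibility lemma $\det M=g\cdot Q(\A,m)$ for some homogeneous $g$ with $\deg g=\sum_i\deg\theta_i-|m|\geq 0$; hence $\det M=c\,Q(\A,m)$ for a nonzero constant $c$ exactly when $\sum_i\deg\theta_i=|m|$, which proves the equivalence of the two numerical conditions. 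For the implication that the determinant condition forces freeness, I would use Cramer's rule: for an arbitrary $\theta\in D(\A,m)$ with coordinate vector $(\theta(x_1),\ldots,\theta(x_\ell))$, the unique solution over $\operatorname{Frac}(S)$ of $\theta=\sum_i f_i\theta_i$ has $f_i=\det M_i/\det M$, where $M_i$ is obtained from $M$ by substituting that coordinate vector into row $i$. Since $M_i$ is the coefficient matrix of the family $\theta_1,\ldots,\theta_{i-1},\theta,\theta_{i+1},\ldots,\theta_\ell$, all lying in $D(\A,m)$, the divisibility lemma gives $Q(\A,m)\mid\det M_i$, so $f_i=\det M_i/(c\,Q(\A,m))\in S$. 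Thus the $\theta_i$ generate $D(\A,m)$ and, being independent, form a basis, so $(\A,m)$ is free.

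The remaining implication — that a homogeneous basis $\{\theta_i\}$ of a free $D(\A,m)$ satisfies $\det M=c\,Q(\A,m)$ — is where I expect the real work. Independence already gives $\det M=g\,Q(\A,m)$ with $\deg g\geq 0$, so it suffices to show $g\in\BBK^*$, equivalently that $\det M$ vanishes to the expected order along every height-one prime. I would argue locally: for $H\in\A$, localize $S$ at the height-one prime $(\alpha_H)$ to get a DVR with uniformizer $\alpha_H$; after a linear change making $\alpha_H=x_1$, the conditions at all $H'\neq H$ become vacuous, so the localized module is free with basis $\{\alpha_H^{m(H)}\partial_{x_1},\partial_{x_2},\ldots,\partial_{x_\ell}\}$. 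Comparing this explicit local basis with the localization of the global basis $\{\theta_i\}$ — related by a matrix in $\GL_\ell$ of the DVR, hence of unit determinant — yields $\ord_{\alpha_H}(\det M)=m(H)$. For a height-one prime not of the form $(\alpha_H)$, every $\alpha_{H'}$ localizes to a unit, so $D(\A,m)$ localizes to $\Der_S$ and $\det M$ has order $0$ there. Hence $g$ has order zero at every height-one prime, forcing $g$ to be a unit and, being homogeneous, a nonzero constant. The main obstacle is this local computation: identifying the precise local structure of $D(\A,m)$ at each $(\alpha_H)$ and checking, via the fact that localization carries a free basis to a free basis, that the change of coordinates between the localized global basis and the explicit local basis has unit determinant.
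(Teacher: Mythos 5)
Your proposal is correct and complete in outline; the one thing to note is that the paper itself does not prove this theorem at all --- it is quoted as a classical result with references to Saito \cite{Sa} and Ziegler \cite{Z}, so there is no internal proof to compare against. What you have written is a sound reconstruction of the standard Saito--Ziegler argument, and all three of your ingredients check out. The divisibility lemma via the column operation sending column $j_0$ to $\bigl(\theta_i(\alpha_H)\bigr)_i$ (which multiplies $\det M$ by the unit $c_{j_0}$) and pairwise coprimality of the $\alpha_H^{m(H)}$ in the UFD $S$ is exactly right; the Cramer's rule step is the classical proof that the determinant condition forces freeness, and your observation that each substituted matrix $M_i$ is again the coefficient matrix of a family in $D(\A,m)$, hence divisible by $Q(\A,m)$, is the key point. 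For the converse you chose the localization route: since $S_{(\alpha_H)}$ is a DVR, the conditions at $H'\neq H$ localize away (localization is exact, so it commutes with the finite intersection defining $D(\A,m)$), the local module is visibly free on $\{\alpha_H^{m(H)}\partial_{x_1},\partial_{x_2},\ldots,\partial_{x_\ell}\}$, and two bases of a free module over a local ring differ by a matrix of unit determinant, giving $\ord_{\alpha_H}(\det M)=m(H)$ and order $0$ at all other height-one primes, so $g=\det M/Q(\A,m)$ is a unit and, being homogeneous, a nonzero scalar. This is essentially how Ziegler adapts Saito's original argument to multiplicities; an alternative finish you could have used, closer to the Orlik--Terao treatment, is to note $Q(\A,m)\partial_{x_j}\in D(\A,m)$ and deduce $\det M\mid Q(\A,m)^{\ell}$, though that only shows $g$ is a product of the $\alpha_H$ and still needs an order count like yours to conclude --- so your local analysis is doing genuine, necessary work rather than avoidable bookkeeping.
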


\begin{defn}
	\label{def:Euler}
	Let $(\CA, \mu),$ (where $\CA\neq\emptyset)$ be a multiarrangement in $\mathbb{K}^\ell$. Fix $H_0$ in $\CA$.
	We define the \emph{deletion}  $(\CA', \mu')$ and \emph{Euler restriction} $(\CA'', \mu^*)$
	of $(\CA, \mu)$ with respect to $H_0$ as follows.
	If $\mu(H_0) = 1$, then set $\CA' = \CA \setminus \{H_0\}$
	and define $\mu'(H) = \mu(H)$ for all $H \in \CA'$.
	If $\mu(H_0) > 1$, then set $\CA' = \CA$
	and define $\mu'(H_0) = \mu(H_0)-1$ and
	$\mu'(H) = \mu(H)$ for all $H \ne H_0$.
	
	Let $\CA'' = \{ H \cap H_0 \mid H \in \CA \setminus \{H_0\}\ \}$.
	The \emph{Euler multiplicity} $\mu^*$ of $\CA''$ is defined as follows.
	Let $Y \in \CA''$. Since the localization $\CA_Y$ is of rank $2$, the
	multiarrangement $(\CA_Y, \mu_Y)$ is free, 
	\cite[Cor.~7]{Z}. 
	According to 
	\cite[Prop.~2.1]{ATW2},
	the module of derivations 
	$D(\CA_Y, \mu_Y)$ admits a particular homogeneous basis
	$\{\theta_Y, \psi_Y, \partial_{x_3}, \ldots, \partial_{x_\ell}\}$,
	such that $\theta_Y \notin \alpha_0 \Der(S)$
	and $\psi_Y \in \alpha_0 \Der(S)$,
	where $H_0 = \ker \alpha_0$.
	Then on $Y$ the Euler multiplicity $\mu^*$ is defined
	to be $\mu^*(Y) = \deg \theta_Y$.
\end{defn}

\begin{defn}\label{definition: balanced}
    Let $(\A,m)$ be a multiarrangement. We call the multiplicity $m$ \emph{balanced} if for all $H\in\CA$ the inequality $$m(H)\leq \sum_{H'\in \CA\backslash\{H\}}m(H')$$ holds.
\end{defn}

\begin{lemma}
\label{lemma: nabla and theta}
    Let $\theta,\theta'\in\Der_S$ and $\alpha\in V^*$, then $$(\nabla_\theta \theta')(\alpha)=\theta(\theta'(\alpha)).$$
\end{lemma}
\begin{proof}
    Let $\theta=\sum_{i=1}^\ell f_i\partial_{x_i}, \theta'=\sum_{i=1}^\ell g_i\partial_{x_i}$, and $\alpha=\sum_{i=1}^\ell c_ix_i$. Then we have
   $$ \left(\nabla_\theta \theta'\right)(\alpha) = \sum_{i=1}^\ell \theta(g_i)\partial_{x_i}(\alpha)                                   = \sum_{i=1}^\ell c_i\theta(g_i) = \theta\left(\sum_{i=1}^\ell c_ig_i\right)=\theta(\theta'(\alpha)), $$
as claimed.
\end{proof}

\begin{proposition}\label{indep}
Let $\theta \in \Der_S$. Then $\theta$ is $m$-universal for $\A$ if and only if $
\{\nabla_{\partial_{x_i}} \theta\}_{i=1}^\ell$ is independent over $S$, 
$\ell\cdot(\deg \theta-1)=|m|$, and $\nabla_{\partial_{x_i}} \theta \in D(\A,m)$ for all $i$. Moreover, $D(\A,\mu)\simeq D(\A,m+\mu)$ for any multiplicy $\mu:\A\to\{0,1\}$.
\end{proposition}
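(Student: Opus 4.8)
The plan is to unwind the definition of $m$-universality using the $S$-linearity of $\Phi_\theta$, reducing both the characterization and the isomorphism assertion to statements about the images $\nabla_{\partial_{x_i}}\theta$ of the standard basis.

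For the characterization, the first observation is that $\Phi_\theta$ is $S$-linear, since $\nabla_{g\varphi}\theta=g\,\nabla_\varphi\theta$; hence it is determined by its values on the $S$-basis $\{\partial_{x_i}\}$ of $\Der_S$, and in particular $\Phi_\theta(\Der_S)\subseteq D(\A,m)$ holds if and only if $\nabla_{\partial_{x_i}}\theta\in D(\A,m)$ for every $i$. Granting this well-definedness, $\Phi_\theta$ is an isomorphism precisely when it carries $\{\partial_{x_i}\}$ to an $S$-basis of $D(\A,m)$. Since $\theta$ is homogeneous, each $\nabla_{\partial_{x_i}}\theta$ is homogeneous of degree $\deg\theta-1$, so by Saito's criterion (Theorem \ref{Saito}) the derivations $\nabla_{\partial_{x_i}}\theta$ form a basis of the free module $D(\A,m)$ if and only if they are $S$-linearly independent and $\ell(\deg\theta-1)=\sum_i\deg(\nabla_{\partial_{x_i}}\theta)=|m|$. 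These are exactly the three stated conditions, which gives the equivalence.

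For the \emph{Moreover} statement, assume $\theta$ is $m$-universal. Since $\theta\in D(\A,m+\bbmo)$, write $\theta(\alpha_H)=c_H\alpha_H^{m(H)+1}$ with $c_H\in S$ for each $H\in\A$. I claim $\Phi_\theta$ restricts to an isomorphism $D(\A,\mu)\to D(\A,m+\mu)$. For the forward inclusion, let $\varphi\in D(\A,\mu)$; by Lemma \ref{lemma: nabla and theta},
$$(\nabla_\varphi\theta)(\alpha_H)=\varphi(c_H\alpha_H^{m(H)+1})=\varphi(c_H)\alpha_H^{m(H)+1}+(m(H)+1)c_H\alpha_H^{m(H)}\varphi(\alpha_H).$$
Because $\mu(H)\le 1$, the first summand lies in $S\alpha_H^{m(H)+1}\subseteq S\alpha_H^{m(H)+\mu(H)}$ and the second in $S\alpha_H^{m(H)}\varphi(\alpha_H)\subseteq S\alpha_H^{m(H)+\mu(H)}$, so $\nabla_\varphi\theta\in D(\A,m+\mu)$. (This is exactly where the hypothesis $\mu:\A\to\{0,1\}$ enters.)

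The reverse inclusion is the main obstacle, and rests on the nondegeneracy fact that $\alpha_H\nmid c_H$ for every $H$, equivalently $\theta(\alpha_H)\notin(\alpha_H^{m(H)+2})$. To see this, suppose $\alpha_H\mid c_H$; then the displayed identity with $\varphi=\partial_{x_i}$ (using Lemma \ref{lemma: nabla and theta}) shows $(\nabla_{\partial_{x_i}}\theta)(\alpha_H)\in(\alpha_H^{m(H)+1})$ for all $i$, so the entire basis $\{\nabla_{\partial_{x_i}}\theta\}$ of $D(\A,m)$ would lie in the submodule $D(\A,m+\delta_H)$, forcing $D(\A,m+\delta_H)=D(\A,m)$; but then $(\A,m+\delta_H)$ would be free with exponents $(\deg\theta-1,\dots,\deg\theta-1)$, and Saito's criterion gives $|m+\delta_H|=\ell(\deg\theta-1)=|m|$, contradicting $|m+\delta_H|=|m|+1$. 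With $\alpha_H\nmid c_H$ established, take $\eta\in D(\A,m+\mu)\subseteq D(\A,m)$ and write $\eta=\nabla_\varphi\theta$ for the unique $\varphi=\Phi_\theta^{-1}(\eta)$. For each $H$ with $\mu(H)=1$, the identity above forces $(m(H)+1)c_H\alpha_H^{m(H)}\varphi(\alpha_H)\in(\alpha_H^{m(H)+1})$, hence $c_H\varphi(\alpha_H)\in(\alpha_H)$; since $S/(\alpha_H)$ is an integral domain and $\overline{c_H}\neq 0$, we conclude $\varphi(\alpha_H)\in(\alpha_H)=S\alpha_H^{\mu(H)}$, while the condition is automatic when $\mu(H)=0$. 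Thus $\varphi\in D(\A,\mu)$, so $\Phi_\theta$ maps $D(\A,\mu)$ bijectively onto $D(\A,m+\mu)$, yielding the desired $S$-module isomorphism.
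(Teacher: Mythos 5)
Your treatment of the \emph{Moreover} statement and of the basis criterion follows the paper's proof closely (and in one respect you are more careful than the paper: you explicitly check that $\Phi_\theta$ maps $D(\A,\mu)$ \emph{into} $D(\A,m+\mu)$, which the paper leaves implicit). However, there is a genuine gap in the ``if'' direction of the characterization. By Definition \ref{definiton: Universal Derivations}, being $m$-universal requires in particular that $\theta \in D(\A,m+\bbmo)$, whereas the proposition only assumes $\theta\in\Der_S$, and the three listed conditions constrain only the derivations $\nabla_{\partial_{x_i}}\theta$, not $\theta$ itself. Your argument shows that under these conditions $\Phi_\theta$ is an isomorphism of $\Der_S$ onto $D(\A,m)$, but it never verifies that $\theta$ lies in $D(\A,m+\bbmo)$, so you have not concluded that $\theta$ is $m$-universal in the sense of the definition. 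This is not a pedantic point: the example following Theorem \ref{theorem: rank two universal criterion} in the introduction is there precisely to stress that membership of $\theta$ in $D(\A,m+\bbmo)$ is a nontrivial constraint, and the paper opens its proof with ``it is sufficient to show that $\theta$ belongs to $D(\A,m+\bbmo)$'' --- i.e.\ it regards this as the actual content of the equivalence.

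The missing step is proved in the paper as follows: write $\theta(\alpha_H)=h\alpha_H^{n}$ with $\alpha_H\nmid h$ and choose coordinates with $\partial_{x_1}(\alpha_H)=c\neq 0$; then $\nabla_{\partial_{x_1}}\theta\in D(\A,m)$ and Lemma \ref{lemma: nabla and theta} give
$$S\alpha_H^{m(H)}\ni(\nabla_{\partial_{x_1}}\theta)(\alpha_H)=\partial_{x_1}\bigl(h\alpha_H^{n}\bigr)=\bigl(\alpha_H\partial_{x_1}(h)+nhc\bigr)\alpha_H^{n-1},$$
which forces $n\geq m(H)+1$, i.e.\ $\theta\in D(\A,m+\bbmo)$; a Saito-determinant argument then pins down $n=m(H)+1$. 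You in fact carry out essentially this second computation (the nondivisibility $\alpha_H\nmid c_H$ via the observation that otherwise the whole basis of $D(\A,m)$ would land in $D(\A,m+\delta_H)$) inside your \emph{Moreover} paragraph --- but there you \emph{assume} $m$-universality and hence membership in $D(\A,m+\bbmo)$. You need to run the same product-rule argument in the forward direction of the equivalence before invoking the definition. Once that is inserted, the remainder of your proof --- $S$-linearity plus Saito's criterion for the basis statement, and the identity $(\nabla_\varphi\theta)(\alpha_H)=\varphi(\theta(\alpha_H))$ combined with $\alpha_H\nmid c_H$ for the isomorphism $D(\A,\mu)\simeq D(\A,m+\mu)$ --- coincides with the paper's argument.
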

\begin{proof}
For the equivalence it is sufficient to show that $\theta$ belongs to $D(\A,m+\bbmo)$. Let $\theta=\sum_{i=1}^\ell g_i \partial_{x_i} \in \Der_S$ and $H\in\A$. We may assume, without loss of generality, that $\partial_{x_1} 
(\alpha_H)=c \neq 0$ and that $\theta(\alpha_H)=h\alpha_H^{n}$ with $
\alpha_H \nmid h\in S, n\geq 0$. Since $\nabla_{\partial_{x_1}} \theta \in D(\A,m)$, it follows from Lemma \ref{lemma: nabla and theta}  that 
$$
S\alpha_H^{m(H)} \ni (\nabla_{\partial_{x_1}}\theta)(\alpha_H)=\partial_{x_1}(\theta(\alpha_H))=\partial_{x_1}(h\alpha_H^n)
=(\alpha_H\partial_{x_1}h+n hc)\alpha_H^{n-1}.
$$
So $n \ge m(H)+1$, showing that $\theta \in D(\A,m+\bbmo)$.  Assume that $n >m(H)+1$. Then it holds for all $1\leq i\leq \ell$ that $$
\nabla_{\partial_{x_i}} \theta\in D(\A,m+\bbmo+\delta_H),
$$
and these derivations are linearly independent. But this contradicts Saito's criterion and therefore we have $n =m(H)+1$. The only thing left to prove is the isomorphism claim. So fix a multiplicity $\mu:\A \rightarrow \{0,1\}$. We show that the map 
$$
\Phi:D(\A,\mu) \rightarrow D(\A,m+\mu)
$$
defined by $\Phi(\varphi)\coloneqq\nabla_\varphi \theta$ is an isomorphism of $S$-modules. Let $\varphi=\sum_{i=1}^\ell f_i \partial_{x_i} \in D(\A,\mu)$ and $\nabla_\varphi \theta=0$. Then $\nabla_{\partial_{x_i}}\theta=\sum_{j=1}^\ell \partial_{x_i}(g_j)\partial_{x_j}$ gives   

$$\nabla_\varphi \theta=\sum_{j=1}^\ell \varphi(g_j) \partial_{x_j}=\sum_{i=1}^\ell f_i\left(\sum_{j=1}^\ell \partial_{x_i}(g_j) \partial_{x_j}\right)=\sum_{i=1}^\ell f_i \nabla_{\partial_{x_i}}\theta=0.$$ 

Since the elements of $\{\nabla_{\partial_{x_i}} \theta\}_{i=1}^\ell$ are independent over $S$, it follows that $f_i=0$ for all $i$. Thus $\Phi$ is injective.

To show that $\Phi$ is surjective, let $\eta \in D(\A,m+\mu)$ be arbitrary. We construct a $\varphi \in D(\A,\mu)$ such that $\nabla_\varphi \theta=\eta$. Since $\theta$ is $m$-universal, the map $\Phi_\theta: D(\A,0)=
\Der_S \rightarrow D(\A,m)$ is an isomorphism and $\nabla_{\partial_{x_1}} \theta,\ldots,\nabla_{\partial_{x_\ell}} \theta$ form an $S$-basis for $D(\A,m)$. Consequently, since $\eta \in D(\A,m+\mu) \subset D(\A,m)$, there exist unique $f_i \in S$ such that $\eta=\sum_{i=1}^\ell f_i \nabla_{\partial_{x_i}} \theta$. Therefore, defining $\varphi\coloneqq\sum_{i=1}^\ell f_i \partial_{x_i} \in \Der_S$, we derive
$$
\eta=\sum_{i=1}^\ell f_i \nabla_{\partial_{x_i}} \theta 
=\sum_{i=1}^\ell f_i\left(\sum_{j=1}^\ell \partial_{x_i}(g_j) \partial_{x_j}\right)=\sum_{j=1}^\ell \varphi(g_j) \partial_{x_j}=\nabla_{\varphi }\theta.
$$
So it suffices to show that $\varphi \in D(\A,\mu)$, since $\nabla_\varphi \theta =\eta \in D(\A,m+\mu)$. Let $H \in \A$. There is nothing to show if $\mu(H)=0$, so assume that $\mu(H)=1$. We have to show that $\varphi(\alpha_H) \in S\alpha_H$. Use $\eta\in D(\A,m+\delta_H)$ to obtain
\begin{align*}
S\alpha_H^{m(H)+1} \ni \eta(\alpha_H) & =\nabla_\varphi \theta(\alpha_H)
 =\varphi(\theta(\alpha_H))=
\varphi(h\alpha_H^{m(H)+1})\\
&   =
(\varphi(h) \alpha_H+(m(H)+1)\varphi(\alpha_H)h)\alpha_H^{m(H)}. 
\end{align*}

Since $\alpha_H \nmid h$, we have $\varphi(\alpha_H) \in S\alpha_H$. So $\varphi \in D(\A,\mu)$, which completes the proof.\end{proof}

\begin{corollary}\label{corollary: if universal then critical}
    Let $\A$ be an irreducible arrangement and $m$ a multiplicity on $\A$, and 
    let $\theta\in D(\A,m+\bbmo)$ be $m$-universal. Then $\theta$ is (up to scalar multiplication) the unique derivation of degree $\deg\theta$ in $D(\A,m+\bbmo)$ and $D(\A,m+\bbmo)_{<\deg\theta}=(0)$.
\end{corollary}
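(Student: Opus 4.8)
The plan is to derive both statements from the graded isomorphism supplied by Proposition \ref{indep}. Write $\deg\theta = d+1$. Applying that proposition with $\mu = \bbmo$, the map
\[
\Phi\colon D(\A)=D(\A,\bbmo)\longrightarrow D(\A,m+\bbmo),\qquad \varphi\mapsto \nabla_\varphi\theta,
\]
is an isomorphism of $S$-modules. Since $\theta$ is homogeneous of degree $d+1$, it sends a homogeneous $\varphi$ of degree $j$ to the homogeneous derivation $\nabla_\varphi\theta$ of degree $j+d$; hence $\Phi$ is graded of degree $d$ and restricts to $\K$-linear isomorphisms $D(\A)_j\xrightarrow{\ \sim\ }D(\A,m+\bbmo)_{j+d}$ for every $j$. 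The two assertions of the corollary therefore become, respectively, $D(\A)_{<1}=(0)$ and $\dim_\K D(\A)_1 = 1$, and I would prove these.

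For the first, I note that a nonzero homogeneous $\eta\in D(\A)_{<1}$ would be a constant-coefficient derivation $\eta=\sum_i c_i\partial_{x_i}$; the membership $\eta(\alpha_H)\in S\alpha_H$ forces the scalar $\eta(\alpha_H)$ to vanish for every $H$, and as $\A$ is essential the forms $\alpha_H$ span $V^*$, whence $\eta=0$. Thus $D(\A)_{<1}=(0)$ and so $D(\A,m+\bbmo)_{<d+1}=(0)$.

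The substance lies in the identity $\dim_\K D(\A)_1 = 1$, and this is the step I expect to be the main obstacle, since it is the only place irreducibility is used. The Euler derivation $\theta_E$ lies in $D(\A)_1$, so it is enough to show that every $\eta\in D(\A)_1$ is a scalar multiple of $\theta_E$. A homogeneous degree-one $\eta$ acts as a linear endomorphism $T$ of $V^*$ via $x_j\mapsto\eta(x_j)$, and $\eta\in D(\A)$ exactly when every $\alpha_H$ is an eigenvector of $T$. Grouping the $\alpha_H$ by eigenvalue and using that eigenspaces for distinct eigenvalues are independent, essentiality yields a decomposition $V^*=\bigoplus_\lambda U_\lambda$ into $T$-eigenspaces in which each $\alpha_H$ lies in a single summand. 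More than one eigenvalue would exhibit $\A$ as a nontrivial product, contradicting irreducibility; hence $T$ has a single eigenvalue, and since the $\alpha_H$ span $V^*$, $T$ is a scalar multiple of the identity, i.e.\ $\eta\in\K\theta_E$.

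Finally I would assemble the pieces: $D(\A,m+\bbmo)_{d+1}=\Phi(D(\A)_1)$ is one-dimensional, and because $\theta$ is a nonzero element of it (being $m$-universal, $\theta\neq 0$), $\theta$ spans it; this is consistent with $\Phi(\theta_E)=\nabla_{\theta_E}\theta=(d+1)\theta$ by Euler's identity. Hence $\theta$ is the unique derivation of degree $d+1$ in $D(\A,m+\bbmo)$ up to scalar, completing the proof.
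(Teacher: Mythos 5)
Your proof is correct and follows essentially the same route as the paper: both use the graded isomorphism $\Phi\colon D(\A)\to D(\A,m+\bbmo)$ from Proposition \ref{indep} with $\mu=\bbmo$ together with the degree shift $\deg(\nabla_\varphi\theta)=\deg\varphi+\deg\theta-1$ to reduce everything to $D(\A)_{<1}=(0)$ and $\dim_\K D(\A)_1=1$. The only difference is that the paper simply cites the standard fact that irreducibility forces $D(\A)_1=\K\theta_E$, whereas you prove it from scratch via the eigenspace decomposition of $V^*$; your argument for that step is sound.
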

\begin{proof}
    Since $\A$ is irreducible, the Euler derivation $\theta_E$ is (up to scalar multiplication) the unique derivation of degree $1$ in $D(\A)$. The map $\Phi: D(\A)\to D(\A,m+\bbmo)$ given by  $\Phi(\varphi)=\nabla_{\varphi}\theta$  
     is an isomorphism, by the proof of Proposition \ref{indep}. Since $\deg(\nabla_{\varphi}\theta)=\deg \varphi + \deg \theta - 1$, for $\varphi$ homogeneous, we require $\Phi(\theta_E)=\theta$ which finishes the proof.
\end{proof}

For the remainder of this section, let $\A$ be a $2$-arrangement.
We require some results from \cite{AN}. Given an arrangement $\A$ of rank two and a multiplicity $m:\A \rightarrow \Z_{\ge 0}$, we let $\exp (\A,m)=(d_1,d_2)$. Then we define 
$$
\Delta(m)\coloneqq|d_1 - d_2|.
$$

\begin{theorem}[{\cite[Thm.~0.1]{A5}}]\label{limit of rank 2 exponents}
Let $(\A,m)$ be a balanced $2$-multiarrangement with exponents
$\exp(\A,m)=(d_1,d_2)$. Then 
$\Delta(m) \le |\A|-2$. 
\end{theorem}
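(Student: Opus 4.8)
The final statement to prove is Theorem \ref{limit of rank 2 exponents}, which asserts that for a balanced rank-two multiarrangement, $\Delta(m) = |d_1 - d_2| \le |\A| - 2$. This is a cited result (\cite[Thm.~0.1]{A5}), so my plan follows the structure of how such an inequality is typically established in the rank-two setting, where freeness is automatic by Ziegler's result.

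\textbf{The approach.} The plan is to induct on the order $|m| = \sum_{H\in\A} m(H)$, using the deletion--restriction machinery for multiarrangements, specifically the Euler deletion $(\A',m')$ from Definition \ref{def:Euler}. In rank two every multiarrangement is free, so the exponents $(d_1,d_2)$ are well-defined with $d_1 + d_2 = |m|$. First I would handle the base cases: when $|m|$ is small (say $|m|\le 1$, or $|\A|\le 2$), the inequality is immediate or vacuous since $\Delta(m)=0$ when the two degrees are forced to be equal, and the bound $|\A|-2$ is nonnegative precisely when balancedness has content. The key structural tool is that decreasing the multiplicity at a single hyperplane $H_0$ changes the exponents $(d_1,d_2)$ to an adjacent pair: passing from $(\A,m)$ to its deletion $(\A,m-\delta_{H_0})$ (when $m(H_0)\ge 1$) either decreases $d_1$ by one or decreases $d_2$ by one, so $\Delta$ changes by exactly $\pm 1$. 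This is the rank-two analogue of the addition--deletion behavior and follows from comparing Poincaré polynomials together with Saito's criterion (Theorem \ref{Saito}).

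\textbf{The main steps in order.} First I would fix the hyperplane $H_0$ on which $m$ attains its maximum value, say $m(H_0) = \max_H m(H)$; balancedness says $m(H_0) \le |m| - m(H_0)$, i.e. $2m(H_0)\le |m|$. Second, I would form the deletion $m^- := m - \delta_{H_0}$ (assuming $m(H_0)\ge 1$; if $m\equiv 0$ there is nothing to prove). The crux is to understand how $\Delta$ responds: by the deletion theorem for rank-two multiarrangements, $\{d_1,d_2\}$ and the exponents $\{d_1^-,d_2^-\}$ of $(\A,m^-)$ differ by decreasing exactly one coordinate by $1$, so $\Delta(m^-) = \Delta(m)\pm 1$. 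Third, the subtle point is that $m^-$ need not remain balanced, so I cannot simply invoke the inductive hypothesis blindly. I would therefore argue that whenever $m$ is balanced and $\Delta(m) = |\A|-2$ is already met with equality, decreasing $m(H_0)$ must decrease $\Delta$ (the larger exponent belongs to the hyperplane $H_0$ where multiplicity is largest), keeping $m^-$ balanced and allowing the induction to close. Conversely, I would show that the step that would increase $\Delta$ beyond $|\A|-2$ forces $m$ to be unbalanced, contradicting the hypothesis. This dichotomy — that reaching the extremal value and attempting to exceed it is obstructed precisely by balancedness — is the heart of the argument.

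\textbf{The main obstacle.} The hardest part will be controlling which of the two exponents moves under deletion and relating that to balancedness, since the correspondence between ``the exponent that decreases'' and ``the hyperplane $H_0$ we deleted from'' is not a formal consequence of degree-counting alone; it requires tracking the actual derivations in $D(\A,m)$ and $D(\A,m^-)$ and how a basis element either does or does not lie in $\alpha_{H_0} \Der_S$. Making this precise likely needs the explicit structure of rank-two derivation modules and the behavior of the Euler multiplicity as in Definition \ref{def:Euler}, together with a careful bookkeeping argument showing that the balanced constraint forbids the gap $\Delta$ from ever exceeding $|\A|-2$ as the multiplicity is built up hyperplane by hyperplane. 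An alternative, possibly cleaner route would be to phrase the bound directly in terms of the largest multiplicity $m(H_0)$ versus the total $|m|$ and prove the estimate $d_1,d_2 \ge m(H_0)$ — i.e. that neither exponent can fall below the maximal single multiplicity when $m$ is balanced — from which $\Delta = |m| - 2\min(d_1,d_2) \le |m| - 2\cdot(\text{something}) \le |\A|-2$ would follow after combining with the balancedness inequality; establishing that lower bound on the exponents is then the genuine technical core.
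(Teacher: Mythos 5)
The paper does not prove this statement at all: it is imported verbatim from Abe's earlier work \cite[Thm.~0.1]{A5} and used as a black box, so there is no internal proof to compare yours against. Judged on its own, your proposal is a plan rather than a proof, and it has a genuine gap at exactly the point you flag. The induction on $|m|$ via deletion only yields $\Delta(m)=\Delta(m^-)\pm 1\le(|\A|-2)+1=|\A|-1$; to close it you must show that when $\Delta(m^-)=|\A|-2$, the exponent that increases upon restoring $\delta_{H_0}$ is the \emph{smaller} one, equivalently that the lower-degree basis element $\theta^-$ of $D(\A,m^-)$ does not already lie in $D(\A,m)$. You assert this via ``the larger exponent belongs to the hyperplane $H_0$ where multiplicity is largest,'' but in the balanced regime exponents are not attached to hyperplanes, and deciding which exponent moves under a single deletion is precisely the content of \cite[Thm.~0.1]{A5} (compare the peak-point structure of Theorem \ref{str}, whose applicability here would presuppose the very bound being proved). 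No mechanism is offered for establishing this dichotomy, so the induction does not close.

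Your fallback route fails concretely. The estimate $d_1\ge\max_H m(H)$ is indeed provable for balanced $m$ with $\Delta(m)\neq 0$ (the lower-degree element $\theta$ satisfies $\theta(\alpha_H)\neq 0$ by Lemma \ref{balanced}, whence $d_1=\deg\theta(\alpha_H)\ge m(H)$ for every $H$), but it gives only $\Delta(m)=|m|-2d_1\le |m|-2\max_H m(H)$, and that last quantity is \emph{not} bounded by $|\A|-2$: for the constant multiplicity $2$ on four concurrent lines one has $|m|-2\max_H m(H)=8-4=4>2=|\A|-2$, while the true exponents are $(4,4)$ for generic lines. So the ``genuine technical core'' you defer is not merely unproved; the inequality chain you propose to route it through is false. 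A correct argument requires finer input than degree counting, e.g.\ the multiplicity-lattice structure theory of \cite{AN} or the chamber-counting argument of \cite{A5} itself.
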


Now consider the \emph{multiplicity lattice} $\Lambda$ of $\A$ and its subset $\Lambda'$:
\begin{eqnarray*}
\Lambda&\coloneqq&\{m:\A \rightarrow \Z_{\ge 0}\},\\
\Lambda'&\coloneqq&\{m \in \Lambda\mid \Delta(m) \neq 0\}.
\end{eqnarray*}
The partial order on $\Lambda$ is given by $m\leq m' \iff m(H)\leq m'(H)$ for all $H\in\A$. We view $\Delta$ as a map on  $\Lambda$:
$$
\Delta:\Lambda \rightarrow \Z_{\ge 0}.
$$
In addition, 
for a fixed hyperplane $H \in \A$ define the corresponding set of \emph{unbalanced multiplicities} $\Lambda_H$ and the set of \emph{balanced multiplicities $\Lambda_0$ on $\A$}  by
\begin{eqnarray*}
\Lambda_H&\coloneqq&\left\{ m \in \Lambda \ \middle| \ 
m(H) > \sum_{H' \in \A \setminus \{H\}} m(H')\right\}\,\\
\Lambda_0&\coloneqq&\Lambda' \setminus \bigcup_{H \in \A} \Lambda_H.
\end{eqnarray*}
Note, $\Lambda_0$ is denoted by $\Lambda'_{\phi}$ in \cite{AN}. 
Define the \emph{distance $d(m,m')$ between $m,m' \in \Lambda$} by 
$$
d(m,m')\coloneqq\sum_{H \in \A} | m(H)-m'(H)|. 
$$
We say that two points $m,m' \in \Lambda$ are \emph{connected} 
if there exist points $m_1,\ldots,m_n \in \Lambda$ such that 
$d(m_i,m_{i+1})=1\ (i=1,\ldots,n-1)$ and $m_1=m,m_n=m'$. 
This allows us to define \emph{connected components} of $\Lambda$. 
Likewise, we define connected components of $\Lambda_0$. 

\begin{lemma}[{\cite[Lem.~4.2]{AN}}]\label{one}
For $m_1,m_2 \in \Lambda$ with $m_1\leq m_2$ and
$d(m_1,m_2)=1$, we have 
$|\Delta(m_1)-\Delta(m_2)|=1$.
\end{lemma}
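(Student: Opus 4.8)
The plan is to reduce the statement to a single structural fact about how the exponents of a (necessarily free) rank-two multiarrangement change when the multiplicity at one hyperplane is raised by one, and then to finish by an elementary case check. The hypotheses $m_1\le m_2$ and $d(m_1,m_2)=1$ mean that $m_2=m_1+\delta_H$ for a unique $H\in\A$. By Ziegler's theorem (\cite[Cor.~7]{Z}) both $(\A,m_1)$ and $(\A,m_2)$ are free of rank two; write $\exp(\A,m_1)=(d_1,d_2)$ and $\exp(\A,m_2)=(e_1,e_2)$ with $d_1\le d_2$ and $e_1\le e_2$. Saito's criterion (Theorem \ref{Saito}) gives $d_1+d_2=|m_1|$ and $e_1+e_2=|m_2|=|m_1|+1$. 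The crux is the claim that $(e_1,e_2)$ equals either $(d_1+1,d_2)$ or $(d_1,d_2+1)$. Granting this, $\Delta(m_1)=d_2-d_1$ while $\Delta(m_2)$ equals either $|d_2-d_1-1|=|\Delta(m_1)-1|$ or $(d_2+1)-d_1=\Delta(m_1)+1$; in either case $|\Delta(m_2)-\Delta(m_1)|=1$, as required.

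To establish the claim I would exploit the inclusion $D(\A,m_2)\subseteq D(\A,m_1)$ together with the graded $S$-linear map $\rho\colon D(\A,m_1)\to S/(\alpha_H)$ sending $\theta$ to the residue class of $\theta(\alpha_H)/\alpha_H^{m_1(H)}$. This map is well defined of degree $-m_1(H)$, and its kernel is precisely $D(\A,m_2)$. Since $\ell=2$, the ring $S/(\alpha_H)$ is a polynomial ring in one variable, so the image of $\rho$ is a graded submodule of it; it is nonzero, since otherwise $D(\A,m_1)=D(\A,m_2)$, contradicting the different total exponents. Hence the image is a shifted copy of that one-variable ring, so that, in the grading of $D(\A,m_1)$, its Hilbert series is $t^{d_{\min}}/(1-t)$, where $d_{\min}$ is the least degree in which $\rho$ does not vanish. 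Feeding the Hilbert series $(t^{d_1}+t^{d_2})/(1-t)^2$ and $(t^{e_1}+t^{e_2})/(1-t)^2$ of the two free modules into the short exact sequence $0\to D(\A,m_2)\to D(\A,m_1)\to \operatorname{Im}\rho\to 0$ reduces the claim to the polynomial identity $t^{d_1}+t^{d_2}+t^{d_{\min}+1}=t^{e_1}+t^{e_2}+t^{d_{\min}}$.

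It then remains to locate $d_{\min}$ using a homogeneous basis $\{\theta_1,\theta_2\}$ of $D(\A,m_1)$ with $\deg\theta_i=d_i$. If $\rho(\theta_1)\neq 0$, then $d_{\min}=d_1$ (nothing has smaller degree), and the identity forces $\{e_1,e_2\}=\{d_1+1,d_2\}$; if $\rho(\theta_1)=0$, then every derivation of degree below $d_2$ is a multiple of $\theta_1$ and hence maps to zero, so $d_{\min}=d_2$ and the identity forces $\{e_1,e_2\}=\{d_1,d_2+1\}$. I expect the main obstacle to be precisely this last bookkeeping: confirming that $\operatorname{Im}\rho$ is a full shifted copy of the one-variable polynomial ring --- equivalently, that once $\rho$ is nonzero in one degree it is nonzero in all higher degrees --- and pinning down $d_{\min}$, since this is exactly where freeness in rank two and the one-variable structure of the restriction do the real work.
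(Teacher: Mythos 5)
The paper does not prove this lemma: it is quoted directly from \cite[Lem.~4.2]{AN}, so there is no internal argument to compare against, and your proposal must be judged on its own. It is correct and self-contained. The reduction to the claim that exactly one of the two exponents increases by one when the multiplicity is raised by $\delta_H$ is the standard route, and your Hilbert-series argument establishes that claim cleanly: the residue map $\rho$ is graded $S$-linear with kernel exactly $D(\A,m_2)$, its image is a nonzero graded ideal of $S/(\alpha_H)\cong\K[t]$ and hence a shifted copy $t^{k}\K[t]$ (this answers your own worry about $\rho$ staying nonzero in all higher degrees --- it is automatic because every nonzero graded ideal of $\K[t]$ has this form), and the resulting identity $t^{d_1}+t^{d_2}+t^{d_{\min}+1}=t^{e_1}+t^{e_2}+t^{d_{\min}}$ is all that is needed. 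In fact the final bookkeeping you flag as the main obstacle can be bypassed entirely: reading that identity as an equality of multisets $\{d_1,d_2,d_{\min}+1\}=\{e_1,e_2,d_{\min}\}$ and noting $d_{\min}\neq d_{\min}+1$ forces $d_{\min}\in\{d_1,d_2\}$, and cancelling it yields $\{e_1,e_2\}=\{d_1+1,d_2\}$ or $\{d_1,d_2+1\}$ without locating where $\rho$ first fails to vanish; either outcome gives $|\Delta(m_1)-\Delta(m_2)|=1$ after the small case check for $d_1=d_2$. The only inputs are that $\A$ has rank two, so that Ziegler's freeness theorem applies and the restriction ring is a polynomial ring in one variable, which is precisely the standing assumption in the relevant section of the paper.
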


The notion of $\emph{peak points}$ of $\A$ is introduced in the following result.

\begin{theorem}[{\cite[Thm.~3.2]{AN}}]\label{str}
Let $C \subset \Lambda_0$ be a connected component of $\Lambda_0$. 
Then there exists a unique point $m \in C$, called the \emph{peak point of $C$}, such that 
$$
\Delta(m) \ge \Delta(m')\ \forall m' \in C.
$$
Moreover, 
$$
C=\{m' \in \Lambda' \mid d(m,m') < \Delta(m)\}
$$
and for $m' \in C$, 
$$
\Delta(m')=\Delta(m)-d(m,m').
$$
\end{theorem}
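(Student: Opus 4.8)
The plan is to treat $\Delta$ as a height function on the integer lattice $\Lambda$ and to show that each connected component of $\Lambda_0$ is a solid $\ell^1$-ball (a ``pyramid'') whose apex is the peak point. The two structural inputs are Lemma \ref{one}, which says $\Delta$ changes by exactly $\pm 1$ under a unit step between comparable multiplicities, and Theorem \ref{limit of rank 2 exponents}, which bounds $\Delta$ on the balanced locus.

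First I would record the Lipschitz bound: since any two points of $\Lambda$ are joined by a monotone $\ell^1$-geodesic and each unit step changes $\Delta$ by exactly $1$ by Lemma \ref{one}, one gets $|\Delta(m)-\Delta(m')| \le d(m,m')$ for all $m,m' \in \Lambda$. Existence of a point of maximal $\Delta$ in a component $C$ is then immediate: $C \subseteq \Lambda_0$ consists of balanced multiplicities, so $\Delta$ is bounded above on $C$ by $|\A|-2$ by Theorem \ref{limit of rank 2 exponents}, and, being integer-valued, attains its maximum at some $m \in C$. By maximality together with Lemma \ref{one}, every neighbour of $m$ lying in $\Lambda_0$ has $\Delta$-value exactly $\Delta(m)-1$.

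The heart of the argument is a descent statement: for a maximal point $m$ of $C$ and any $m' \in C$, there is an $\ell^1$-geodesic from $m$ to $m'$, of length $d(m,m')$, lying entirely in $\Lambda_0$ and along which $\Delta$ drops by exactly $1$ at each step; this yields $\Delta(m') = \Delta(m)-d(m,m')$. To establish it I would analyse, at a point $\mu \in \Lambda'$ with exponents $d_1<d_2$ and minimal-degree generator $\theta_1 \in D(\A,\mu)_{d_1}$, the set $P(\mu) = \{H \mid \alpha_H^{\mu(H)+1} \text{ divides } \theta_1(\alpha_H)\}$ of ``uphill'' directions: using freeness of rank-two multiarrangements (Ziegler) and a short degree count, combined with Lemma \ref{one}, bumping $\mu(H)$ raises $\Delta$ by $1$ precisely when $H \in P(\mu)$ and lowers it by $1$ otherwise. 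This reduces the uphill/downhill dichotomy to the excess vanishing of $\theta_1$, and from it I would show that at $m$ one can always select a downhill step that simultaneously decreases $d(\cdot,m')$ and preserves balancedness, iterating to build the geodesic inside $\Lambda_0$. The main obstacle is exactly the balancedness: one must check that the descent from the apex reaches the wall $\{\Delta=0\}$ before it reaches the unbalanced walls $\Lambda_H$, equivalently that the $P(\mu)$-analysis is compatible with the constraints defining $\Lambda_0$. Here the bound $\Delta \le |\A|-2$ is what keeps the apex far enough inside the balanced region for the descent not to escape $\Lambda_0$ prematurely.

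Granting the descent statement, the remaining assertions are formal. Uniqueness of the peak follows by applying the distance formula to two maximal points $m,\tilde m$, giving $\Delta(\tilde m)=\Delta(m)-d(m,\tilde m)$ and hence $d(m,\tilde m)=0$. For the description of $C$, the inclusion $C \subseteq \{m' \in \Lambda' \mid d(m,m')<\Delta(m)\}$ is immediate from the formula together with $\Delta(m')\ge 1$; conversely, given $m' \in \Lambda'$ with $d(m,m')<\Delta(m)$, the descent geodesic from $m$ toward $m'$ has all intermediate $\Delta$-values equal to $\Delta(m)-j \ge \Delta(m)-d(m,m') \ge 1$, so it stays in $\Lambda'$ and, by the balancedness step, in $\Lambda_0$; thus $m'$ is connected to $m$ inside $\Lambda_0$ and lies in $C$. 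The identity $\Delta(m')=\Delta(m)-d(m,m')$ is then already recorded along this path.
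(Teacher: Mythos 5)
First, for calibration: the paper itself contains no proof of this statement --- Theorem \ref{str} is imported verbatim from \cite[Thm.~3.2]{AN}, whose proof occupies the bulk of that paper's Sections 3--4 --- so your attempt can only be judged against that source. Your scaffolding is sound: the unit-Lipschitz property of $\Delta$ does follow from Lemma \ref{one} applied along a monotone lattice path; existence of a maximizer on $C$ follows from Theorem \ref{limit of rank 2 exponents} since $C$ consists of balanced multiplicities and $\Delta$ is a bounded set of positive integers there; uniqueness of the peak and the ball description of $C$ are indeed formal consequences of the distance formula. Your dichotomy for \emph{increasing} steps is also correct: for $\mu\in\Lambda'$ with $d_1<d_2$, every degree-$d_1$ element of $D(\A,\mu)$ is a scalar multiple of $\theta_1$, so $\exp(\A,\mu+\delta_H)=(d_1,d_2+1)$ precisely when $\alpha_H^{\mu(H)+1}\mid\theta_1(\alpha_H)$, and $(d_1+1,d_2)$ otherwise.

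The genuine gap is that the descent statement --- the heart, as you say --- is asserted rather than proved, and it is exactly the content of the theorem. Concretely: (i) your $P(\mu)$-analysis governs only increasing steps, whereas a geodesic from $m$ to a general $m'\in C$ must both raise and lower coordinates; the decreasing-step dichotomy requires a different criterion, and nothing in your sketch rules out that at some intermediate point $\mu$ \emph{every} coordinate in which $\mu$ still differs from $m'$ is an uphill direction. Excluding that is where \cite{AN} expends its real effort (their Lemmas 4.5--4.17, including the independence statement quoted here as Theorem \ref{ANindependent}). (ii) The balancedness preservation is only gestured at via the heuristic that $\Delta\le|\A|-2$ keeps the apex ``far enough inside''; no estimate is given. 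As it happens, this particular worry can be discharged by an elementary wall argument you do not make: if $\mu$ is balanced and $\mu\pm\delta_{H_1}$ is unbalanced at some $H_0$, then necessarily $\mu(H_0)=\sum_{H\ne H_0}\mu(H)$; but then any $\theta\in D(\A,\mu)$ with $\deg\theta<\mu(H_0)$ has $\theta(\alpha_{H_0})=0$, hence is a multiple of the derivation annihilating $\alpha_{H_0}$ and is divisible by $\prod_{H\ne H_0}\alpha_H^{\mu(H)}$, forcing $\deg\theta\ge\mu(H_0)$ --- so $\Delta(\mu)=0$, contradicting $\mu\in\Lambda'$. Thus a path inside $\Lambda'$ starting in $\Lambda_0$ can never enter $\bigcup_H\Lambda_H$, making your ``main obstacle'' vacuous --- but this argument is absent from your write-up, which instead leaves the obstacle standing. (iii) In the converse inclusion you invoke ``the descent geodesic from $m$ toward $m'$'' for an \emph{arbitrary} $m'\in\Lambda'$ with $d(m,m')<\Delta(m)$, although your descent statement was formulated only for $m'\in C$; extending it to arbitrary targets in the ball is again the substance of the theorem, not a formality. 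In sum: plausible strategy, correct periphery, but the core lemma on which everything rests is missing, so this does not yet constitute a proof.
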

We call a connected component of $\Lambda_0$ in Theorem \ref{str} a \emph{finite component}  of $\Lambda$, and every $\Lambda_H$ an \emph{infinite component} of $\Lambda$. 

\begin{theorem}[{\cite[Lem.~4.17]{AN}}]\label{ANindependent}
Let $C_1\neq C_2$ be two connected components in $\Lambda_0$, such that there exist $m_i \in C_i$ with $d(m_1,m_2)=2$. 
Let $\theta_i \in D(\A,m_i)$ be homogeneous basis elements of lower degree for $i = 1,2$. Then $\theta_1$ and $\theta_2$ are linearly independent over $S$.
\end{theorem}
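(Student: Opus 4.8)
The plan is to argue by contradiction and to reduce the question to the peak points furnished by Theorem~\ref{str}. Throughout I would use that a rank-two multiarrangement is free (Ziegler, \cite{Z}), and that when $\Delta(m)>0$ the graded component of $D(\A,m)$ in its minimal degree is one-dimensional, so the \emph{lower-degree basis element} $\theta_m$ is unique up to a nonzero scalar and has $\deg\theta_m=(|m|-\Delta(m))/2$. Call a homogeneous derivation $\Theta=f_1\partial_{x_1}+f_2\partial_{x_2}$ \emph{primitive} if $\gcd(f_1,f_2)=1$. Since $S=\K[x_1,x_2]$ is a unique factorization domain, two homogeneous derivations are $S$-linearly dependent exactly when they are proportional over the fraction field of $S$; in particular, two primitive derivations that are $S$-linearly dependent agree up to a nonzero scalar.

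First I would analyse a single component $C\subset\Lambda_0$ with peak point $m^*$, and let $\Theta_C:=\theta_{m^*}$ be its lower-degree basis element, so $\deg\Theta_C=(|m^*|-\Delta(m^*))/2$. The key claim is that $\Theta_C$ is primitive and satisfies $\ord_{\alpha_H}\Theta_C(\alpha_H)=m^*(H)$ for every $H\in\A$. For the order identity, suppose instead $\ord_{\alpha_H}\Theta_C(\alpha_H)>m^*(H)$, i.e.\ $0\neq\Theta_C\in D(\A,m^*+\delta_H)$. By Lemma~\ref{one}, $\Delta(m^*+\delta_H)=\Delta(m^*)\pm1$; using that $m^*$ is the peak of $C$ (so $\Delta$ cannot exceed $\Delta(m^*)$ inside $\Lambda_0$, Theorem~\ref{str}) together with the description of the exponents when $m^*+\delta_H$ is unbalanced or has $\Delta=0$, one checks in each case that the lower degree of $D(\A,m^*+\delta_H)$ strictly exceeds $\deg\Theta_C$, which is impossible. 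Primitivity follows by the same mechanism: if $\alpha_H\mid\Theta_C$ then $\Theta_C/\alpha_H$ is a nonzero element of $D(\A,m^*-\delta_H)$ of degree $\deg\Theta_C-1$, strictly below the lower degree $(|m^*|-\Delta(m^*))/2=\deg\Theta_C$ of $D(\A,m^*-\delta_H)$ (here $\Delta(m^*-\delta_H)=\Delta(m^*)-1$ since $m^*$ is the peak), a contradiction; and a nonunit factor of $\Theta_C$ coprime to all $\alpha_H$ would directly violate the minimality of $\Theta_C$ in $D(\A,m^*)$.

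Next I would describe the lower-degree element at an arbitrary $m\in C$. Putting $g_m:=\prod_{H\in\A}\alpha_H^{(m(H)-m^*(H))^+}$, the order identity gives $\ord_{\alpha_H}(\Theta_C g_m)(\alpha_H)=\max(m(H),m^*(H))\ge m(H)$, so $\Theta_C g_m\in D(\A,m)$. A direct degree count, using $\Delta(m)=\Delta(m^*)-d(m^*,m)$ from Theorem~\ref{str}, shows that $\deg(\Theta_C g_m)$ equals the lower degree $(|m|-\Delta(m))/2$ of $D(\A,m)$. By uniqueness of the lower-degree element, $\theta_m$ is a scalar multiple of $\Theta_C g_m$; in particular $\theta_m$ is $S$-linearly dependent with the primitive derivation $\Theta_C$ for \emph{every} $m\in C$.

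Finally I would assemble the contradiction. Applying the previous step to $m_1\in C_1$ and $m_2\in C_2$ shows that $\theta_1$ is proportional to $\Theta_{C_1}$ and $\theta_2$ to $\Theta_{C_2}$. If $\theta_1$ and $\theta_2$ were $S$-linearly dependent, then so would be $\Theta_{C_1}$ and $\Theta_{C_2}$; being primitive, they would satisfy $\Theta_{C_1}=c\,\Theta_{C_2}$ for some $c\in\K^\times$. The order identity then forces $m_1^*(H)=\ord_{\alpha_H}\Theta_{C_1}(\alpha_H)=\ord_{\alpha_H}\Theta_{C_2}(\alpha_H)=m_2^*(H)$ for all $H$, so $C_1$ and $C_2$ have a common peak point and hence coincide by Theorem~\ref{str}, contradicting $C_1\neq C_2$. (The hypothesis $d(m_1,m_2)=2$ fixes the configuration needed in the intended application; the independence itself only uses $C_1\neq C_2$.) The main obstacle is the structural claim isolated in the second paragraph---that the peak element $\Theta_C$ is primitive with $\ord_{\alpha_H}\Theta_C(\alpha_H)=m^*(H)$ exactly---whose proof requires a careful case distinction according to whether $m^*\pm\delta_H$ stays balanced, becomes unbalanced toward some hyperplane, or leaves $\Lambda'$, invoking Lemma~\ref{one}, Theorem~\ref{str}, and the known exponents of unbalanced rank-two multiarrangements. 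Once this is in place, the remaining steps are essentially degree bookkeeping.
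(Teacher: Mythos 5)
The paper itself contains no proof of Theorem~\ref{ANindependent}: it is quoted verbatim from \cite[Lem.~4.17]{AN}, so there is no in-paper argument to compare against; your reconstruction therefore has to stand on its own, and it essentially does. Your route --- show the peak element $\Theta_C$ is primitive with the exact order identity $\ord_{\alpha_H}\Theta_C(\alpha_H)=m^*(H)$, deduce that every lower-degree element at $m\in C$ is a scalar multiple of $\Theta_C\,g_m$, and then read off the peak multiplicity from a hypothetical dependence --- is sound, and it buys something the cited lemma does not state: independence for \emph{any} $m_1\in C_1$, $m_2\in C_2$ with $C_1\neq C_2$, with the hypothesis $d(m_1,m_2)=2$ playing no role (exactly as you note; this stronger form is also all that the paper's application in Theorem~\ref{univ} needs, where it is invoked for $m+\delta_1$ and $m+\delta_2$). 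All the degree bookkeeping checks out: membership $\Theta_C g_m\in D(\A,m)$ follows from the order identity, and $\deg(\Theta_C g_m)=(|m|-\Delta(m))/2$ follows from $\Delta(m)=\Delta(m^*)-d(m^*,m)$; the final step correctly recovers $m_1^*=m_2^*$ and hence $C_1=C_2$ via the ball description in Theorem~\ref{str}.

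One caution on the structural claim in your second paragraph: the parenthetical ``$\Delta$ cannot exceed $\Delta(m^*)$ inside $\Lambda_0$'' is false as stated, since other components may well have larger $\Delta$; the peak property only controls $C$ itself. What actually excludes $\Delta(m^*+\delta_H)=\Delta(m^*)+1$ with $m^*+\delta_H\in\Lambda_0$ is the ball description of Theorem~\ref{str}: if $m^*+\delta_H$ lay in a component $C'$ with peak $p'$, then $d(p',m^*)\le d(p',m^*+\delta_H)+1=\Delta(p')-\Delta(m^*)\le\Delta(p')-1<\Delta(p')$, so $m^*\in C'$, forcing $C'=C$ and contradicting peak uniqueness. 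With that repair the case analysis closes: if $m^*+\delta_H$ is balanced, its lower degree is $(|m^*|+1-(\Delta(m^*)-1))/2=\deg\Theta_C+1$ (and similarly when $\Delta(m^*+\delta_H)=0$); if it is unbalanced, it can only be unbalanced at $H$ itself, which forces $2m^*(H)=|m^*|$ and (via Lemma~\ref{one}, since then $\Delta(m^*+\delta_H)=1$) $\Delta(m^*)=2$, whence the unbalanced exponents give lower degree $|m^*|-m^*(H)=\deg\Theta_C+1$ again. The same ball argument also secures your primitivity step ($\Delta(m^*-\delta_H)=\Delta(m^*)-1$ in all cases). So the proof is correct once this one local/global distinction is made explicit.
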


\section{Proofs of Theorems \ref{theorem: alternative universal criterion} and \ref{theorem: rank two universal criterion}}
\label{sect: main result proof}

\begin{lemma}\label{lemma20}  
Let $\theta$ be $m$-universal for $(\A,m)$. Then $\theta \in D(\A,m+\bbmo) \setminus 
D(\A,m+\bbmo+\delta_H)$ for every $H \in \A$. 
Indeed, $(\A,m+\bbmo)$ is $(d+1)$-critical, where  $d+1\coloneqq\deg \theta$.
\end{lemma}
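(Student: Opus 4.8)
The plan is to prove the two assertions of Lemma \ref{lemma20} in order: first that $\theta \in D(\A,m+\bbmo)\setminus D(\A,m+\bbmo+\delta_H)$ for every $H$, and then that $(\A,m+\bbmo)$ is $(d+1)$-critical. The membership $\theta \in D(\A,m+\bbmo)$ is essentially immediate: since $\theta$ is $m$-universal, Definition \ref{definiton: Universal Derivations} presupposes $\theta \in D(\A,m+\bbmo)$, and Proposition \ref{indep} already establishes that the value $\theta(\alpha_H)$ vanishes to order exactly $m(H)+1$ along each $\alpha_H$. The non-membership in $D(\A,m+\bbmo+\delta_H)$ is just the statement that this order is \emph{exactly} $m(H)+1$ rather than strictly larger, which is precisely the conclusion extracted in the proof of Proposition \ref{indep} via Saito's criterion. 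So I would first record this sharp vanishing order, writing $\theta(\alpha_H)=h\alpha_H^{m(H)+1}$ with $\alpha_H \nmid h$, which directly gives $\theta \notin D(\A,m+\bbmo+\delta_H)$.

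For the criticality claim I would verify the three defining conditions of Definition \ref{definition: k-critical} for $(\A,m+\bbmo)$ with $k=d+1=\deg\theta$. The condition $D(\A,m+\bbmo)_{d+1}\neq(0)$ is witnessed by $\theta$ itself. The condition $D(\A,m+\bbmo)_{<d+1}=(0)$ is exactly the vanishing established in Corollary \ref{corollary: if universal then critical}, so I would invoke that corollary directly rather than reprove it; it tells us both that $\theta$ is the unique derivation of degree $d+1$ up to scalar and that there are no nonzero derivations of lower degree. The remaining condition is that $D(\A,m+\bbmo+\delta_H)_{d+1}=(0)$ for every $H\in\A$, and this is the substantive part.

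For the last condition, suppose for contradiction that some nonzero homogeneous $\eta \in D(\A,m+\bbmo+\delta_H)_{d+1}$ exists. Since $D(\A,m+\bbmo+\delta_H)\subseteq D(\A,m+\bbmo)$ and both live in degree $d+1$, uniqueness from Corollary \ref{corollary: if universal then critical} forces $\eta$ to be a nonzero scalar multiple of $\theta$. Hence $\theta\in D(\A,m+\bbmo+\delta_H)$, which contradicts the non-membership already proved in the first part. I expect this contradiction argument to be the cleanest route; the main conceptual point is simply recognizing that $(d+1)$-criticality decomposes into the uniqueness/minimality statement of the Corollary together with the sharp vanishing order, both of which are already in hand from the preliminaries.

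The principal obstacle, if any, is bookkeeping rather than mathematics: I must ensure that all three conditions are checked against the correct multiplicity $m+\bbmo$ (not $m$), and that the degree $k=d+1$ is used consistently throughout. One should also confirm that irreducibility of $\A$, needed so that the Euler derivation is the unique degree-one element of $D(\A)$ underlying Corollary \ref{corollary: if universal then critical}, is available; it is part of the standing hypotheses in that corollary. With these in place the proof is short, since every analytic input has been front-loaded into Proposition \ref{indep} and Corollary \ref{corollary: if universal then critical}.
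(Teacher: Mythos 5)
Your proposal is correct and follows essentially the same route as the paper: the paper likewise reduces the failure of $(d+1)$-criticality to $\theta\in D(\A,m+\bbmo+\delta_H)$ for some $H$ via Corollary \ref{corollary: if universal then critical}, and then derives a contradiction from Saito's criterion applied to $\det\left((\nabla_{\partial_{x_i}}\theta)(x_j)\right)=c\,Q(\A,m)$ --- the same determinant argument you implicitly invoke by citing the sharp vanishing order $\theta(\alpha_H)=h\alpha_H^{m(H)+1}$, $\alpha_H\nmid h$, established in the proof of Proposition \ref{indep}. The only cosmetic difference is that the paper re-runs that computation inside the lemma rather than quoting it.
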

\begin{proof}
Let $\theta\in D(\A,m+\bbmo)$ be $m$-universal. Then by definition $\{\nabla_{\partial_{x_i}}\theta\}_{i=1}^\ell$ forms a basis for $D(\A,m)$. 
Then for $M=(\nabla_{\partial_{x_i}}\theta(x_j))_{1\leq i,j\leq \ell}$, it follow from Theorem \ref{Saito} that 
$$0\neq \det M=c\cdot Q(\A,m),$$
where $c \in \BBK\setminus \{0\}$. Suppose $D(\A,m+\bbmo)$ is not $(d+1)$-critical. Then by Corollary \ref{corollary: if universal then critical} there is a hyperplane $H\in\CA$ such that $\theta\in D(\A,m+\bbmo+\delta_H)$. By Lemma \ref{lemma: nabla and theta}, we have $$(\nabla_{\partial_{x_i}}\theta)(\alpha_H)=\partial_{x_i}(\theta(\alpha_H))$$ and therefore  $\nabla_{\partial_{x_i}} \theta\in D(\A,m+\delta_H)$ for all $1\leq i\leq \ell$. Finally, we derive 
$$0\neq \det M \in S\cdot Q(\A,m+\delta_H)=S\cdot (Q(\A,m)\cdot\alpha_H),$$
a contradiction.
\end{proof}

\begin{proof}[Proof of Theorem \ref{theorem: alternative universal criterion}] 
First assume (1). 
Since $\theta$ is $m$-universal and $\deg \theta=d+1$, the set $\{\nabla_{\partial_{x_i}}\theta \mid 1 \le i \le \ell\}$ forms a basis for $D(\A,m)$ and $\deg \nabla_{\partial_{x_i}}\theta = d$ for each $1\leq i \leq \ell$. Lemma \ref{lemma20} shows that $(\A,m+\bbmo)$ is $(d+1)$-critical, so we have (2). 

Now suppose (2). Without loss, we may assume that $\{\ker x_i \mid 1\leq i\leq\ell\} \subset \A$. Let $m_i\coloneqq (m+\bbmo)(\ker x_i)$. 
Then for $\theta(x_i)\coloneqq f_{i}x_i^{n_{i}}$ with $n_{i} \ge m_i$, we have $f_{i} \neq 0$ and $x_i \nmid f_i$, by the criticality assumption. It follows that 
$$
(\nabla_{\partial_{x_i}} \theta)(x_i)
=\partial_{x_i}(\theta(x_i))
=\partial_{x_i}(f_{i}x_i^{n_{i}})
=\partial_{x_i}(f_{i})x_i^{n_{i}}+f_{i}n_i x_i^{n_{i}-1}
\neq 0. 
$$
Moreover, since $\theta(\alpha_H)\in S\cdot\alpha_H^{(m+\bbmo)(H)}$ also holds for any other $H\in \A$, it follows that 
$$
(\nabla_{\partial_{x_i}} \theta)(\alpha_H)
=\partial_{x_i}(\theta(\alpha_H))
\in S\cdot\alpha_H^{m(H)}
$$
and thus $D(\A,m) \ni \nabla_{\partial_{x_i}} \theta \neq 0$ for $i=1,\ldots,\ell$. 

Since $\deg \nabla_{\partial_{x_i}} \theta=d$ for all $i$ 
and $(\A,m)$ is free with $\exp(\A,m)=(d,\ldots,d)$, 
it suffices to show that $\{\nabla_{\partial_{x_i}}\theta \mid 1 \le i \le \ell\}$
 is independent over $\K$. 
Assume that 
$$
\sum_{i=1}^\ell c_i\nabla_{\partial_{x_i}} \theta=0
$$
for some $c_i \in \K$. 
Then for all $j=1,\ldots,\ell$ 
$$
\sum_{i=1}^\ell c_i (\nabla_{\partial_{x_i}} \theta)(x_j)=0.
$$
This is equivalent to 
\begin{align*}
    0 = \sum_{i=1}^\ell c_i \partial_{x_i}(f_{j}x_j^{n_{j}})
    &=c_j \partial_{x_j}(f_{j}x_j^{n_{j}})+
    \sum_{i\neq j}^\ell c_i \partial_{x_i}(f_{j}x_j^{n_{j}})\\
    &=c_j (\partial_{x_j}(f_{j})x_j^{n_{j}}+f_{j}\partial_{x_j}(x_j^{n_{j}}))+
    \sum_{i\neq j}^\ell c_i 
    (\partial_{x_i}(f_{j})x_j^{n_{j}}+f_{j}\partial_{x_i}(x_j^{n_{j}}))\\
    &=c_j (\partial_{x_j}(f_{j})x_j^{n_{j}}+f_{j}n_jx_j^{n_{j}-1})+
    \sum_{i\neq j}^\ell c_i 
    \partial_{x_i}(f_{j})x_j^{n_{j}}\\
    &=c_j f_{j}n_jx_j^{n_{j}-1}+
    \sum_{i=1}^\ell c_i 
    \partial_{x_i}(f_{j})x_j^{n_{j}}.
\end{align*}

Since $x_j \nmid f_j$, 
this equality shows that $c_j=0$, 
showing that $\{\nabla_{\partial_{x_i}}\theta \mid 1 \le i \le \ell\}$ 
is independent over $\K$, which completes the proof.
\end{proof}

Next, we focus on arrangements of rank two. We require the following lemma.

\begin{lemma}\label{balanced}
Let $(\A,m)$ be a central $2$-multiarrangement such that 
$\Delta(m) \neq 0$ and let $\theta \in D(\A,m)$ be a basis element of lower 
degree. Then 
$(\A,m)$ is balanced if and only if $\theta(\alpha_H) \neq 0$ for 
every $H \in \A$.
\end{lemma}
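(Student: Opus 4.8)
The plan is to exploit the special structure of derivations in two variables together with Saito-type degree bookkeeping. Throughout write $S=\K[x,y]$, label $\A=\{H_1,\dots,H_n\}$, $\alpha_i=\alpha_{H_i}$ and $m_i=m(H_i)$, so that balancedness of $(\A,m)$ is the condition $m_i\le\sum_{j\neq i}m_j$ for every $i$, and its failure at $H_i$ (i.e.\ $m\in\Lambda_{H_i}$) means $m_i>\sum_{j\neq i}m_j$. Since every rank-two multiarrangement is free by \cite[Cor.~7]{Z}, I may write $\exp(\A,m)=(d_1,d_2)$ with $d_1\le d_2$ and $d_1+d_2=|m|$; as $\Delta(m)\neq 0$ we have $d_1<d_2$, the minimal-degree part of $D(\A,m)$ is one-dimensional over $\K$, and the lower-degree basis element $\theta$ (of degree $d_1$) is determined up to a nonzero scalar, so the truth value of ``$\theta(\alpha_H)\neq 0$ for all $H$'' is well defined.

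The one structural input I would isolate first is the shape of derivations annihilating a fixed linear form. For a fixed $H_i$, writing $\alpha_i=ax+by$ and $\theta_0:=-b\,\partial_x+a\,\partial_y$, the $S$-linear map $\Der_S\to S$, $\varphi\mapsto\varphi(\alpha_i)$, has kernel exactly $S\theta_0$; moreover $\theta_0$ is homogeneous of degree $0$ and $\theta_0(\alpha_j)=\det\!\left(\begin{smallmatrix} a & b \\ a_j & b_j\end{smallmatrix}\right)\in\K^\times$ for every $j\neq i$, since $\alpha_i$ and $\alpha_j$ cut out distinct hyperplanes and are therefore linearly independent.

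With this in hand I would prove the two implications by degree counting. For the direction ``$\theta(\alpha_i)=0$ for some $i$ $\Rightarrow$ $m$ unbalanced at $H_i$'': the kernel description gives $\theta=p\,\theta_0$ for some homogeneous $p\in S$ with $\deg p=d_1$; imposing $\theta\in D(\A,m)$ forces $\theta(\alpha_j)=\theta_0(\alpha_j)\,p\in S\alpha_j^{m_j}$ for each $j\neq i$, and since the $\alpha_j$ are pairwise coprime this yields $\prod_{j\neq i}\alpha_j^{m_j}\mid p$, whence $d_1\ge\sum_{j\neq i}m_j$. Combining this with $d_1<d_2$ and $d_1+d_2=|m|=m_i+\sum_{j\neq i}m_j$ gives $2\sum_{j\neq i}m_j\le 2d_1<m_i+\sum_{j\neq i}m_j$, i.e.\ $m_i>\sum_{j\neq i}m_j$, so $(\A,m)$ is unbalanced. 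Conversely, for ``$m$ unbalanced at $H_i$ $\Rightarrow$ $\theta(\alpha_i)=0$'': the explicit derivation $\eta:=\big(\prod_{j\neq i}\alpha_j^{m_j}\big)\theta_0$ lies in $D(\A,m)$ and has degree $\sum_{j\neq i}m_j$, so $d_1\le\sum_{j\neq i}m_j<m_i$; then $\theta(\alpha_i)\in S\alpha_i^{m_i}$ is homogeneous of degree $d_1<m_i$, which is possible only if $\theta(\alpha_i)=0$. Taking the two implications together with their contrapositives yields the equivalence: $(\A,m)$ is balanced if and only if $\theta(\alpha_H)\neq 0$ for all $H\in\A$.

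I expect the only genuinely delicate point to be the bookkeeping of strict versus non-strict inequalities: the hypothesis $\Delta(m)\neq 0$ is used precisely to upgrade $d_1\le d_2$ to $d_1<d_2$, which is exactly what converts the weak bound $m_i\ge\sum_{j\neq i}m_j$ into the strict inequality needed for unbalancedness. The remaining ingredients---the rank-one kernel computation and the divisibility $\prod_{j\neq i}\alpha_j^{m_j}\mid p$---are routine in the two-variable polynomial ring, and freeness of rank-two multiarrangements is quoted rather than reproved.
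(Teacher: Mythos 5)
Your proof is correct, and it takes a genuinely different (and more self-contained) route than the paper in one of the two directions. For ``unbalanced at $H$ $\Rightarrow$ $\theta(\alpha_H)=0$'' both arguments are essentially the same: one exhibits the explicit low-degree derivation $\bigl(\prod_{j\neq i}\alpha_j^{m_j}\bigr)\theta_0$ annihilating $\alpha_i$ (the paper simply asserts that this \emph{is} the lower-degree basis element, while you only use it to bound $d_1$ and then kill $\theta(\alpha_i)$ by a degree count, which is cleaner since it avoids justifying uniqueness of the basis element at that stage). The real divergence is in the converse. The paper argues that if $\theta(\alpha_H)=0$ then $\{\theta,\alpha_H^{s}\varphi\}$ is a basis of $D(\A,m+s\delta_H)$ for all $s$, so $m$ sits in an infinite component of the multiplicity lattice and is therefore unbalanced; this leans on Saito's criterion plus the Abe--Numata component structure (and the bound $\Delta\le|\A|-2$ for balanced multiplicities). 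You instead observe that the kernel of $\varphi\mapsto\varphi(\alpha_i)$ is the rank-one module $S\theta_0$, write $\theta=p\,\theta_0$, and extract $\prod_{j\neq i}\alpha_j^{m_j}\mid p$ from coprimality, which gives $d_1\ge\sum_{j\neq i}m_j$ and, combined with $d_1<d_2$, the strict inequality $m_i>\sum_{j\neq i}m_j$. This is elementary linear algebra and divisibility in $\K[x,y]$ and does not invoke the lattice machinery at all; what it costs is nothing beyond the freeness of rank-two multiarrangements, which both proofs quote. Your bookkeeping of where $\Delta(m)\neq0$ is used (to upgrade $d_1\le d_2$ to $d_1<d_2$) is exactly right and is indeed the only delicate point.
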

\begin{proof}
Suppose that  $(\A,m)$ is not balanced. Then there exists a hyperplane 
$H \in \A$ such that $m(H) > \sum_{H_1 \in \A\setminus\{H\}} m(H_1)$. 
We may assume that $\alpha_H=x_1$. Then it is easy to see that 
$\theta=\prod_{H_1 \in \A\setminus\{H\}} \alpha_{H_1} \partial_{x_2}$. 
Hence $\theta(x_1)=0$. Conversely, assume that 
$\theta(x_1)=0$. Now let $\varphi \in D(\A,m)$ be 
a higher degree basis element. Then 
$\{\theta,(\alpha_H)^{s} \varphi\}$ forms a basis 
for $D(\A,m+s\delta_H)$. Hence $m \in \Lambda$ belongs to 
an infinite component and is not balanced, which completes the proof. 
\end{proof}

To prove Theorem \ref{theorem: rank two universal criterion}, we first utilize Lemma \ref{balanced} to show that a  basis element of lower degree is an $m$-universal derivation. 

\begin{theorem}\label{univ}
Let $\A$ be an arrangement in $\K^2$ with $|\A| >2$ and 
$m+\bbmo:\A \rightarrow \Z_{> 0}$ 
a balanced multiplicity such that 
$\Delta(m+\bbmo)= |\A|-2$.
Assume that one of the following holds:
\begin{itemize}
\item[(1)]
$|\A|=3$ and $m$ is balanced, or 
\item[(2)]
$|\A| \ge 4$.
\end{itemize}
Then a lower degree basis element $\theta_0$ for $D(\A,m+\bbmo)$ is $m$-universal.
\end{theorem}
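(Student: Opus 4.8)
The plan is to verify directly the three conditions of Proposition \ref{indep} for the lower degree basis element $\theta_0\in D(\A,m+\bbmo)$. Write $\mu\coloneqq m+\bbmo$ and $\exp(\A,\mu)=(d_1,d_2)$ with $d_1\le d_2$, so that $\deg\theta_0=d_1$. The hypothesis that $\mu$ is balanced with $\Delta(\mu)=|\A|-2$ is exactly the equality case of Theorem \ref{limit of rank 2 exponents}. First I would solve the two relations $d_2-d_1=\Delta(\mu)=|\A|-2$ and $d_1+d_2=|\mu|=|m|+|\A|$, obtaining $d_1=\tfrac{|m|}{2}+1$ (forcing $|m|$ to be even). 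This yields condition (2) of Proposition \ref{indep} at once, namely $\ell\cdot(\deg\theta_0-1)=2\cdot\tfrac{|m|}{2}=|m|$.

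Condition (3) is then automatic: from $\theta_0(\alpha_H)\in S\alpha_H^{\mu(H)}=S\alpha_H^{m(H)+1}$ and Lemma \ref{lemma: nabla and theta} one computes $(\nabla_{\partial_{x_i}}\theta_0)(\alpha_H)=\partial_{x_i}(\theta_0(\alpha_H))\in S\alpha_H^{m(H)}$, so $\nabla_{\partial_{x_i}}\theta_0\in D(\A,m)$ for $i=1,2$. The balancedness hypothesis enters through Lemma \ref{balanced}: since $\mu$ is balanced with $\Delta(\mu)\ne 0$ (because $|\A|>2$), the lower degree basis element satisfies $\theta_0(\alpha_H)\ne 0$ for every $H\in\A$. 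This non-vanishing is the fact I would exploit in the final, and hardest, step.

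The main obstacle is condition (1), the $S$-independence of $\{\nabla_{\partial_{x_1}}\theta_0,\nabla_{\partial_{x_2}}\theta_0\}$. Writing $\theta_0=f_1\partial_{x_1}+f_2\partial_{x_2}$ with $f_1,f_2$ homogeneous of degree $d_1$, the coefficient matrix of these two derivations has entries $\partial_{x_i}(f_j)$, and its determinant is the Jacobian $\partial_{x_1}f_1\,\partial_{x_2}f_2-\partial_{x_1}f_2\,\partial_{x_2}f_1$. For two binary forms of equal degree in characteristic zero this Jacobian vanishes precisely when $f_1,f_2$ are proportional over $\K$ (one sees this by writing $f_2=gf_1$ with $g$ homogeneous of degree $0$ and applying Euler's relation), i.e. when $\theta_0=f\cdot\psi$ for some homogeneous $f$ of degree $d_1$ and a constant derivation $\psi=a\partial_{x_1}+b\partial_{x_2}$. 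I would rule this out using the non-vanishing from Lemma \ref{balanced}: in that case $\theta_0(\alpha_H)=\psi(\alpha_H)\,f$, and $\theta_0(\alpha_H)\ne 0$ forces $\psi(\alpha_H)\in\K\setminus\{0\}$, whence $\alpha_H^{\mu(H)}\mid f$ for every $H$. As the $\alpha_H$ are pairwise non-proportional, this gives $Q(\A,\mu)\mid f$, contradicting $\deg f=d_1=\tfrac{|m|}{2}+1<|m|+|\A|=\deg Q(\A,\mu)$. Hence the Jacobian is nonzero, the two derivations are $S$-independent, and all three conditions of Proposition \ref{indep} hold, so $\theta_0$ is $m$-universal.

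Two closing remarks. The argument above uses only that $\mu$ is balanced with $\Delta(\mu)=|\A|-2$ and $|\A|>2$; the case split (1)/(2) serves to keep these hypotheses consistent, since when $|\A|=3$ the relation $\Delta(\mu)\equiv|\mu|\pmod 2$ together with $\Delta(\mu)=1$ already forces $m$ to be balanced (an equality $\mu_i=\mu_j+\mu_k$ would make $|\mu|$ even), so the extra assumption on $m$ is harmless there. An alternative to the last step is to route through Theorem \ref{theorem: alternative universal criterion}, proving that $(\A,m)$ is free with $\exp(\A,m)=(d_1-1,d_1-1)$ and that $D(\A,\mu)$ is $d_1$-critical; however, the direct Jacobian argument powered by Lemma \ref{balanced} appears to be the most economical route, and I would present it as the main line of proof.
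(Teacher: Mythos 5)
Your proof is correct, but it takes a genuinely different route from the paper's. The paper runs the argument through the Abe--Numata multiplicity-lattice machinery: it observes that $m+\bbmo$ is the peak point of a finite component, uses Lemma \ref{one} and Theorem \ref{str} to get $\exp(\A,m+\delta_i)=(\deg\theta_0-1,\deg\theta_0)$, identifies $\nabla_{\partial_{x_i}}\theta_0$ (nonzero by Lemma \ref{balanced}) as a lower degree basis element of $D(\A,m+\delta_{3-i})$, and deduces the $S$-independence from Theorem \ref{ANindependent} after ruling out $\Delta(m)\neq 0$ via the identity $x_1\nabla_{\partial_{x_1}}\theta_0+x_2\nabla_{\partial_{x_2}}\theta_0=(\deg\theta_0)\,\theta_0$. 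You instead verify the three conditions of Proposition \ref{indep} directly; the degree and membership conditions are routine, and you settle the only delicate point, the $S$-independence, by the classical characteristic-zero fact that the Jacobian of two binary forms of equal degree vanishes exactly when they are proportional over $\K$ (your Euler-relation argument for this is sound), after which the degenerate case $\theta_0=f\psi$ with $\psi$ constant is excluded by Lemma \ref{balanced} together with the count $\deg f=d_1<|m+\bbmo|=\deg Q(\A,m+\bbmo)$. Your route is more elementary and self-contained --- it dispenses with Theorems \ref{str} and \ref{ANindependent} altogether and isolates exactly where balancedness is used, namely the nonvanishing $\theta_0(\alpha_H)\neq 0$ --- whereas the paper's route produces along the way the exponents of the neighbouring multiplicities and the equality $\Delta(m)=0$, information that is structurally useful elsewhere. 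Your side remark that in case (1) the balancedness of $m$ is forced by the remaining hypotheses is also correct, though the cleaner justification is via Theorem \ref{theorem: Wakamiko A2 classification}: if $m$ were unbalanced while $m+\bbmo$ is balanced, one would get $(m+\bbmo)(H_3)=(m+\bbmo)(H_1)+(m+\bbmo)(H_2)$ and hence $\Delta(m+\bbmo)=0$, contradicting $\Delta(m+\bbmo)=1$.
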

\begin{proof} 
The assumptions and Theorem \ref{limit of rank 2 exponents}  
imply that $m+\bbmo$ is the peak point of 
some finite component $C \subset \Lambda_0$. 
We may assume that $\{\ker x_1, \ker x_2\} \subset \A$ and take 
$\partial_{x_1}, \partial_{x_2}$ such that 
$\partial_{x_i}(x_j)=\delta_{ij}$ for $\{i,j\}=\{1,2\}$. 
By Proposition \ref{indep}, it suffices to show that
$\nabla_{\partial_{x_i}} \theta_0 \in D(\A,m)$ for $i=1,2$, and 
$\nabla_{\partial_{x_1}} \theta_0$ and $\nabla_{\partial_{x_2}} \theta_0$ are 
$S$-independent. Let $\delta_i := \delta_{\ker x_i}$  be the indicator multiplicity of $\ker x_i$ for  $i\in\{1,2\}$. 
Then $\nabla_{\partial_{x_i}} \theta_0 \in 
D(\A,m+\delta_{3-i})$ 
by the same arguments as in the proof of Theorem \ref{theorem: alternative universal criterion}. 
Since $d(m+\bbmo,m+\delta_i)=|\A|-1$, 
it follows from Lemma \ref{one} and 
Theorem \ref{str} that $\exp(\A,m+\delta_i)=
(\deg \theta_0-1, \deg \theta_0)$. Since  $\nabla_{\partial_{x_i}} \theta_0 \neq 0$ by Lemma 
\ref{balanced},  
$\nabla_{\partial_{x_i}} \theta_0$ is of degree $\deg \theta_0-1$ and is a lower degree basis element 
for $D(\A,m+\delta_{3-i})$ for $i=1,2$. 

Note that 
$\Delta(m+\delta_i)=1$ and $\Delta(m+\delta_1+\delta_2)=0$ by Theorem \ref{str}. 
Hence, by Theorem \ref{theorem: alternative universal criterion} or by Theorem \ref{ANindependent}, it suffices to show that 
$\Delta(m)=0$. 
Suppose that $\Delta(m) \neq 0$. Then Lemma \ref{one} shows that 
$\Delta(m)=2$. 
Let $\theta'$ be a lower degree basis element for $D(\A,m)$. 
Since $x_i \theta'$ is a lower degree basis element 
for $D(\A,m+\delta_i)$ for $i=1,2$, and 
$\Delta(m+\delta_i)=1$, it follows that 
$$
\nabla_{\partial_{x_1}} \theta_0=x_2  \theta', \ 
\nabla_{\partial_{x_2}} \theta_0=x_1  \theta'
$$
up to non-zero scalars.
Since 
$$
\theta_0=
\nabla_{\theta_E} \theta_0=
x_1\nabla_{\partial_{x_1}} \theta_0+
x_2\nabla_{\partial_{x_2}} \theta_0
=2x_1x_2 \theta'
$$
up to non-zero scalars, 
$\theta_0$ and $\theta'$ are $S$-dependent, contradicting 
Theorem \ref{ANindependent}. 
Hence $\Delta(m)=0$ and 
Proposition \ref{indep} completes the proof. 
\end{proof}

\begin{proof}[Proof of Theorem \ref{theorem: rank two universal criterion}] 
Owing to  Theorem \ref{univ} the reverse implication holds. For the forward implication take an $m$-universal derivation $\theta \in D(\A,m+\bbmo)$ with $\deg \theta = d+1$.  
Since $|\A| \ge 3$, Theorem \ref{univ} implies 
that $\theta$ is a lower degree basis element for $D(\A,m+\bbmo)$. 

First suppose that $(\A,m+\bbmo)$ is not balanced. 
We may assume that $\ker x \in \A$ with $(m+\bbmo)(\ker x)=m_0$ satisfying $2m_0 > |m+\bbmo|$. 
Then $\exp(\A,m+\bbmo)=(|m+\bbmo|-m_0,m_0)$ with $m_0 > |m+\bbmo|-m_0$. So $|m+\bbmo|-m_0=d+1$ since the $m$-universal derivation must be a  lower degree basis element of $D(\A,m+\bbmo)$. It is easy to see that 
$$
\theta_1\coloneqq \displaystyle \frac{Q(\A,m+\bbmo)}{x^{m_0}} \displaystyle \frac{\partial}{\partial y}
$$
is a lower degree basis element for $D(\A,m+\bbmo)$, so it has to be the $m$-universal $\theta$ (up to a non-zero scalar factor). However,
$\{\nabla_{\partial_x} \theta, \nabla_{\partial_y} \theta\}$
is $S$-dependent, which is a contradiction. So $(\A,m+\bbmo)$ is balanced.

If $\theta_E,\varphi$ form a homogeneous basis for $D(\A)$, then apply Proposition \ref{indep} and Corollary \ref{corollary: if universal then critical} to see that $\nabla_{\theta_E} \theta = \theta$ and $\theta_2\coloneqq \nabla_\varphi \theta$ form a homogeneous basis for $D(\A,m+\bbmo)$. So 
$$
\deg \theta_2-\deg \theta=\deg \varphi-\deg \theta_E=|\A|-2,
$$
which completes the proof.
\end{proof}
 
In Example \ref{ex:a2}, we demonstrate how Theorem \ref{theorem: rank two universal criterion} can be utilized. We require the following result due to Wakamiko.

\begin{theorem}[{\cite[Thm.~1.5]{W07}}]\label{theorem: Wakamiko A2 classification}
	Let $\CA=\{H_1,H_2,H_3\}$ be a $2$-arrangement of three lines, $m$ a multiplicity on $\CA$ with $m(H_i)=k_i,(i=1,2,3)$ and $\exp(\CA,m)=(d_1,d_2)$. Assume that $k_3\geq\max\{k_1,k_2\}$ and let $k=k_1+k_2+k_3$.
	\begin{enumerate}
		\item If $k_3< k_1+k_2-1$, then  
		\begin{equation*}
			\vert d_1-d_2\vert =
			\begin{cases}
				0 & \text{if $k$ is even,}\\
				1 & \text{if $k$ is odd.}\\
			\end{cases}       
		\end{equation*}
		\item If $k_3 \geq k_1+k_2-1$, then $\exp(\CA,m)=(k_1+k_2,k_3)$.
	\end{enumerate}
\end{theorem}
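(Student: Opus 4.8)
The plan is to normalize coordinates, reduce the problem to producing a homogeneous $S$-basis of the claimed degrees (equivalently, to pinning down $\Delta(m)=|d_1-d_2|$), and then treat the two regimes separately. Since any three distinct lines through the origin in $\K^2$ are equivalent under a linear change of coordinates (the associated triples of points of the projective line being related by a projective transformation), I may assume $H_1=\ker x$, $H_2=\ker y$, $H_3=\ker(x-y)$, so that $Q(\CA,m)=x^{k_1}y^{k_2}(x-y)^{k_3}$. By Saito's criterion (Theorem \ref{Saito}) together with $d_1+d_2=|m|=k$, it suffices in each case either to exhibit a homogeneous basis of $D(\CA,m)$ or to bound $\Delta(m)$. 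I will use throughout that $d_1-d_2=k-2d_2$, so that $\Delta(m)\equiv k \pmod 2$.

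For case (1), where $k_3<k_1+k_2-1$, observe that since $k_3=\max\{k_1,k_2,k_3\}$ the inequality $k_3<k_1+k_2$ makes $(\CA,m)$ balanced. Theorem \ref{limit of rank 2 exponents} then gives $\Delta(m)\le|\CA|-2=1$, and combined with the parity constraint $\Delta(m)\equiv k\pmod 2$ this forces $\Delta(m)=0$ when $k$ is even and $\Delta(m)=1$ when $k$ is odd. Thus case (1) requires no explicit construction; it is essentially a corollary of the cited bound.

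For case (2), where $k_3\ge k_1+k_2-1$, I would produce two derivations and verify they form a basis. Take first $\theta_1=x^{k_1}y^{k_2}(\partial_x+\partial_y)$ of degree $k_1+k_2$, which lies in $D(\CA,m)$ since $\theta_1(x-y)=0$. For the second, of degree $k_3$, I would split the expansion $(x-y)^{k_3}=\sum_{i=0}^{k_3}\binom{k_3}{i}(-1)^{k_3-i}x^i y^{k_3-i}$: the hypothesis $k_3\ge k_1+k_2-1$ guarantees that every index $i$ satisfies $i\ge k_1$ or $k_3-i\ge k_2$, so each monomial is divisible by $x^{k_1}$ or by $y^{k_2}$. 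Grouping the monomials accordingly yields $(x-y)^{k_3}=x^{k_1}P-y^{k_2}Q$ with $\deg P=k_3-k_1\ge 0$ and $\deg Q=k_3-k_2\ge 0$, and I set $\theta_2=x^{k_1}P\,\partial_x+y^{k_2}Q\,\partial_y$, so that $\theta_2(x-y)=(x-y)^{k_3}$. Both $\theta_1,\theta_2$ then lie in $D(\CA,m)$, and the Saito determinant computes to $\det(\theta_i(x_j))=x^{k_1}y^{k_2}(y^{k_2}Q-x^{k_1}P)=-Q(\CA,m)$, whence by Theorem \ref{Saito} the pair is a basis and $\exp(\CA,m)=(k_1+k_2,k_3)$.

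The main obstacle is the combinatorial core of case (2): checking that the threshold $k_3\ge k_1+k_2-1$ is exactly the condition making the middle of the binomial vanish, i.e.\ that the gap $\{i: k_3-k_2<i<k_1\}$ is empty precisely when $k_3\ge k_1+k_2-1$. This is what aligns the construction with the case boundary, so that the two regimes meet cleanly at $k_3=k_1+k_2-1$ (where $k$ is odd and $\Delta(m)=1$). The remaining points, namely the degree bookkeeping, the verification that $P,Q$ are well defined even when a monomial is divisible by both $x^{k_1}$ and $y^{k_2}$ (either assignment works) and that $\theta_2\ne 0$, and the evaluation of the Saito determinant, are then routine.
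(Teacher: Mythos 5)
Your proposal is correct. Note, however, that the paper does not prove this statement at all: it is imported verbatim from Wakamiko \cite[Thm.~1.5]{W07}, so there is no internal proof to compare against. On its own merits your argument holds up. The normalization to $Q(\CA,m)=x^{k_1}y^{k_2}(x-y)^{k_3}$ is legitimate (three distinct points of $\mathbb{P}^1(\K)$ are projectively equivalent), and your case~(2) construction is essentially the classical explicit-basis argument: $\theta_1=x^{k_1}y^{k_2}(\partial_x+\partial_y)$ annihilates $x-y$, the splitting $(x-y)^{k_3}=x^{k_1}P-y^{k_2}Q$ is valid because the ``gap'' $\{i:\ k_3-k_2<i<k_1\}$ is empty exactly when $k_3\ge k_1+k_2-1$, and the Saito determinant $-x^{k_1}y^{k_2}(x-y)^{k_3}$ certifies the basis via Theorem~\ref{Saito}; this is in the spirit of Wakamiko's original proof. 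Your case~(1), by contrast, is a genuine shortcut: you deduce $\Delta(m)\le 1$ from the balancedness bound of Theorem~\ref{limit of rank 2 exponents} and finish by the parity of $d_1+d_2=k$. That is logically valid given the results quoted in the paper, and it is cleaner than constructing bases, but be aware that Theorem~\ref{limit of rank 2 exponents} is a later and more general result (\cite{A5}, 2013) whose development in the literature builds on the theory of $2$-multiarrangements to which Wakamiko's theorem belongs; if one wanted a proof faithful to the historical logical order, one would instead verify case~(1) directly (e.g.\ by an inductive or explicit-basis argument), whereas your route buys brevity at the cost of leaning on heavier machinery. One small point worth making explicit in case~(1): the hypotheses force all $k_i\ge 1$ (if, say, $k_1=0$ then $k_3<k_2-1$ contradicts $k_3\ge k_2$), so balancedness indeed holds for every hyperplane, not just $H_3$.
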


\begin{example}\label{ex:a2}
Consider the Coxeter multiarrangement $(\A,m)$ of type $A_2$ defined by 
$$
Q(\A,m)=x^a y^b (x-y)^c.
$$
The multiarrangement $(\A,m)$ is always free and its exponents are given by Theorem \ref{theorem: Wakamiko A2 classification}. Now Theorem \ref{theorem: rank two universal criterion} shows that a lower degree element of $D(\A,m+\bbmo)$ is $m$-universal if and only if both $m$ and $m+\bbmo$ are balanced and $\vert m\vert$ is even.
\end{example}

\section{Examples}\label{sect: examples}
In this section we demonstrate how Theorems \ref{theorem: alternative universal criterion} and \ref{theorem: rank two universal criterion} can be used to derive the existence of universal derivations. Since the majority of our examples concern supersolvable arrangements, we begin by recalling their definition and derive some first corollaries.

\subsection{Supersolvable multiarrangements, I}
The following result by Abe, Terao, and Wakefield plays a key role in the sequel.

\begin{theorem}[{\cite[Thm.~5.10]{ATW2}}]\label{theorem: supersolvable exponents}
    Let $(\A,m)$ be a multiarrangement such that $\A$ has a supersolvable filtration $\A_1\subset\A_2\dots\subset\A_r=\A$ and $r\geq 2$. Let $m_i$ denote the multiplicity $m\vert_{\A_i}$ on $\A_i$ and $\exp(\A_2,m_2)=(d_1,d_2,0,\dots,0)$. Assume that for each $H'\in \A_d\backslash \A_{d-1}, H''\in \A_{d-1} (d=3,\dots, r)$ and $X\coloneqq H'\cap H''$, either that $$\A_X=\{H',H''\}$$
    or
    $$m(H'')\geq \left(\sum_{X \subset H\in(\A_d\backslash \A_{d-1})} m(H)\right)-1.$$
    Then $(\A,m)$ is free with $\exp(\A,m)=(d_1,d_2,\vert m_3\vert-\vert m_2\vert,\dots, \vert m_r\vert-\vert m_{r-1}\vert,0,\dots,0)$.
\end{theorem}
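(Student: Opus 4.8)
The plan is to induct on the length $r$ of the supersolvable filtration, building $(\A_d,m_d)$ from $(\A_{d-1},m_{d-1})$ by adjoining the hyperplanes of $\A_d\setminus\A_{d-1}$ together with their multiplicities. The base case $r=2$ is immediate: $(\A_2,m_2)$ has rank two, hence is free by Ziegler \cite[Cor.~7]{Z}, with exponents $(d_1,d_2)$ as fixed in the statement. For the inductive step I would assume $(\A_{d-1},m_{d-1})$ is free with the stated exponents and aim to produce a single additional exponent equal to $|m_d|-|m_{d-1}|=\sum_{H\in\A_d\setminus\A_{d-1}}m(H)$ while raising the rank by one.

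For the step itself I would apply the addition–deletion theorem for multiarrangements of \cite{ATW2}, which is governed by the Euler restriction of Definition \ref{def:Euler}. Since several hyperplanes, each of possibly large multiplicity, are introduced at once, I would raise the total multiplicity in unit steps, forming a chain $\A_{d-1}=\CB_0\subseteq\CB_1\subseteq\cdots\subseteq\CB_N=\A_d$ in which each $\CB_k$ is obtained from $\CB_{k-1}$ either by adding a single new hyperplane with multiplicity one or by bumping the multiplicity of an already-present new hyperplane by one. At each stage the distinguished hyperplane $H_0$ is the one being (re)introduced; the deletion lowers $|m|$ by one, and the sole new exponent should climb by one, starting at $0$ before any new hyperplane is present and finishing at $|m_d|-|m_{d-1}|$. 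Saito's criterion (Theorem \ref{Saito}) then certifies freeness together with the claimed exponents, provided the hypotheses of the addition theorem hold at every stage.

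The technical heart is the computation of the Euler multiplicity $m^*$ on each restriction. Fix $X=H'\cap H''$ with $H'\in\A_d\setminus\A_{d-1}$ and $H''\in\A_{d-1}$; as a codimension-two flat it has rank-two localization $\A_X$, which is free by \cite[Cor.~7]{Z}, and by Definition \ref{def:Euler} the value $m^*(X)=\deg\theta_X$ is read off the distinguished basis element $\theta_X\notin\alpha_0\Der(S)$. The two alternatives in the hypothesis are exactly the two regimes of Wakamiko's rank-two classification (Theorem \ref{theorem: Wakamiko A2 classification}): when $\A_X=\{H',H''\}$ the localization is a pair of lines and $m^*(X)$ is forced, whereas the inequality $m(H'')\ge\left(\sum_{X\subset H\in(\A_d\setminus\A_{d-1})}m(H)\right)-1$ puts $\A_X$ in the unbalanced regime, where the dominant multiplicity $m(H'')$ fixes the split $\exp(\A_X,m_X)=(k_1+k_2,k_3)$ and again pins down $m^*(X)$. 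The supersolvable structure guarantees that every restricted flat falls into one of these two cases, so that these local computations make the Euler restriction free with the exponents required by the addition theorem.

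The step I expect to be the main obstacle is the exponent bookkeeping across the unit additions together with the verification that, at every intermediate stage $\CB_k$, the Euler restriction has exponents matching those of the deletion so that the addition theorem genuinely applies. This is where the two alternatives on $m(H'')$ must be shown to propagate through the chain: for a localization inside $\CB_k$ only a subset of the new hyperplanes is present, so the relevant sum is a partial sum bounded by $\sum_{X\subset H}m(H)$, and hence the inequality $m(H'')\ge\sum_{X\subset H}m(H)-1$ persists and keeps forcing the Wakamiko case (2) split throughout. Once every $m^*(X)$ is pinned down as above and the single growing exponent is tracked through the chain, the remaining content is the routine check that the degree sum equals $|m_d|$, and Saito's criterion closes the induction.
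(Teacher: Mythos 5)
The paper does not prove this statement; it is imported verbatim from \cite[Thm.~5.10]{ATW2}, so your proposal can only be measured against the source's argument rather than anything in this text. Your overall strategy --- induction along the filtration, unit-step additions governed by the addition--deletion theorem of \cite{ATW2}, and a local computation of the Euler multiplicity on each restricted flat --- is indeed the route taken there, and your observation that the hypothesis's inequality survives passage to the partial sums occurring along the chain $\CB_0\subseteq\cdots\subseteq\CB_N$ is correct and necessary.

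There are, however, two genuine gaps. The smaller one is your appeal to Theorem \ref{theorem: Wakamiko A2 classification}: that result concerns exactly three lines, whereas a localization $\A_X$ at $X=H'\cap H''$ may contain arbitrarily many new hyperplanes. What you actually need is, first, that every such $X$ contains \emph{exactly one} hyperplane of $\A_{d-1}$; this follows from the modularity of the flat $\bigcap_{H\in\A_{d-1}}H$ (if $H'',H'''\in\A_{d-1}$ both contained $X\subseteq H'$, then $H'$ would contain that modular flat and hence lie in $\A_{d-1}$), and it is not automatic from the way you phrased the two alternatives. Second, you need the general rank-two statement that $2m(H'')\ge |m_X|-1$ forces $\exp(\A_X,m_X)=(|m_X|-m(H''),m(H''))$ with the basis element of degree $|m_X|-m(H'')$ divisible by the defining forms of the new hyperplanes, hence lying in $\alpha_0\Der(S)$, so that $m^*(X)=m(H'')$ in the sense of Definition \ref{def:Euler}; Wakamiko's classification gives neither the general count of lines nor the identification of which basis element is $\theta_Y$ and which is $\psi_Y$.

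The larger gap is the sentence asserting that "these local computations make the Euler restriction free with the exponents required by the addition theorem." Computing $m^*(X)$ flat by flat does not yield freeness of $(\A^{H_0},m^*)$, let alone the precise exponents $(d_1,d_2,|m_3|-|m_2|,\dots,|m_{d-1}|-|m_{d-2}|)$ that the addition theorem demands at every stage of your chain. This is the technical heart of the proof: one must show that $(\A^{H_0},m^*)$ --- whose underlying arrangement is $\{H''\cap H_0 : H''\in\A_{d-1}\}$ carrying multiplicity $m(H'')$ --- either again satisfies the hypotheses of the theorem for an induced supersolvable filtration, so that an induction on rank applies, or is otherwise identified with $(\A_{d-1},m_{d-1})$. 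Its intersection lattice need not coincide with that of $\A_{d-1}$ (flats of $\A_{d-1}$ can merge after intersecting with $H_0$), so this identification requires an argument; without it the induction does not close, and the "routine check that the degree sum equals $|m_d|$" never gets off the ground because Saito's criterion also requires the $S$-independence that the addition theorem was supposed to supply.
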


\begin{defn}\label{definition: supersolvable multi}
    Let $(\A,m)$ be a multiarrangement. If there exists a supersolvable filtration for $\A$ such that the conditions in Theorem \ref{theorem: supersolvable exponents} are satisfied, then we call $(\A,m)$ \emph{supersolvable}. 
\end{defn}

The following observation gives a necessary condition for the presence of a universal derivation in the supersolvable case.

\begin{lemma}\label{lemma: breaking supersolvability for Multi-Euler}
    Let $(\CA,m+\bbmo)$ be supersolvable in $V=\mathbb{K}^\ell$ with a supersolvable filtration $\CA_1\subset\CA_2\subset\dots\subset\CA_r=\CA$ as in Theorem \ref{theorem: supersolvable exponents}. 
    If there exists an $m$-universal derivation in $D(\A,m+\bbmo)$, then for any hyperplane $H\in\CA\backslash\CA_2$, this filtration is not  supersolvable for $(\CA,m+\bbmo+\delta_H)$.
\end{lemma}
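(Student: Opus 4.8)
The plan is to argue by contradiction. I would assume an $m$-universal $\theta \in D(\CA, m+\bbmo)$ exists, with $\deg \theta = d+1$, and that the given filtration $\CA_1 \subset \dots \subset \CA_r = \CA$ is nevertheless supersolvable for $(\CA, m+\bbmo+\delta_H)$, where the fixed $H \in \CA \setminus \CA_2$ lies in $\CA_k \setminus \CA_{k-1}$ for some $k \geq 3$. Writing $\mu := m+\bbmo$ and $\mu_i := \mu|_{\CA_i}$, the first move is to record the two inputs that drive the argument. By Lemma \ref{lemma20}, $(\CA, \mu)$ is $(d+1)$-critical; in particular $D(\CA, \mu)_{<d+1} = 0$, so $d+1 = \min \exp(\CA, \mu)$, and $D(\CA, m+\bbmo+\delta_H)_{d+1} = 0$. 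By Theorem \ref{theorem: supersolvable exponents} applied to $(\CA, \mu)$ I may write $\exp(\CA, \mu) = (d_1, d_2, e_3, \dots, e_r)$, where $d_1 \leq d_2$ are the exponents of the rank-two multiarrangement $(\CA_2, \mu_2)$ and $e_j = |\mu_j| - |\mu_{j-1}|$ for $j \geq 3$.

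The crucial step is to locate the minimal exponent inside the rank-two piece, i.e. to prove that $d_1 = d+1$. I would choose coordinates so that every hyperplane of $\CA_2$ is cut out by a form in $x_1, x_2$ alone. Then a derivation lies in $D(\CA_2, \mu_2)$ precisely when its $\partial_{x_1}, \partial_{x_2}$-coefficients satisfy the rank-two conditions, so $D(\CA_2, \mu_2)$ splits over $S$ as the rank-two module in $\partial_{x_1}, \partial_{x_2}$ (free, with generator degrees $d_1 \leq d_2$) together with the free directions $S\partial_{x_3} \oplus \dots \oplus S\partial_{x_\ell}$; hence $\exp(\CA_2, \mu_2) = (d_1, d_2, 0, \dots, 0)$. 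Since $\CA_2 \subset \CA$, we have $\theta \in D(\CA, \mu) \subseteq D(\CA_2, \mu_2)$, a homogeneous element of degree $d+1$. I claim its $\partial_{x_1}, \partial_{x_2}$-component is nonzero: were $\theta \in S\partial_{x_3} \oplus \dots \oplus S\partial_{x_\ell}$, then each $\nabla_{\partial_{x_i}} \theta$ would lie in that same submodule, so the basis $\{\nabla_{\partial_{x_i}} \theta\}_{i=1}^{\ell}$ of $D(\CA, m)$ would span a module of rank at most $\ell - 2$, contradicting $\rank D(\CA, m) = \ell$. A nonzero rank-two component of degree $d+1$ forces $d_1 \leq d+1$, and combined with $d_1 \geq \min \exp(\CA, \mu) = d+1$ this yields $d_1 = d+1$.

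To close the argument I would exploit that $H \notin \CA_2$. Then $(\mu + \delta_H)|_{\CA_2} = \mu_2$, so the rank-two exponents are unchanged, and Theorem \ref{theorem: supersolvable exponents} for $(\CA, m+\bbmo+\delta_H)$ gives exponents equal to those of $(\CA, \mu)$ with only the level-$k$ entry $e_k$ increased by one. In particular $d_1 = d+1$ still appears among the exponents of $(\CA, m+\bbmo+\delta_H)$, so by freeness there is a nonzero homogeneous derivation of degree $d+1$ in $D(\CA, m+\bbmo+\delta_H)$. This contradicts $D(\CA, m+\bbmo+\delta_H)_{d+1} = 0$, which came from the $(d+1)$-criticality of $(\CA, \mu)$. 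Hence the filtration cannot be supersolvable for $(\CA, m+\bbmo+\delta_H)$.

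I expect the middle paragraph to be the main obstacle: without the identity $d_1 = d+1$, raising precisely the level that carries the minimal exponent could conceivably lift every exponent above $d+1$ and dissolve the contradiction. The mechanism that rescues it is the inclusion $D(\CA, \mu) \subseteq D(\CA_2, \mu_2)$ combined with the essentiality of $\CA$, which forces the universal derivation $\theta$ to genuinely involve $\partial_{x_1}$ or $\partial_{x_2}$ and thereby pins the minimum to the rank-two part of the filtration.
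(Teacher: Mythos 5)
Your proof is correct, and its endgame coincides with the paper's: criticality of $D(\CA,m+\bbmo)$ in degree $d+1$ forbids $D(\CA,m+\bbmo+\delta_H)_{d+1}\neq(0)$, while Theorem \ref{theorem: supersolvable exponents} applied to a still-supersolvable $(\CA,m+\bbmo+\delta_H)$ with $H\in\CA\setminus\CA_2$ only bumps a level-$k$ exponent with $k\geq 3$ and leaves $d_1=d+1$ in the exponent multiset, giving the contradiction. Where you genuinely diverge is the intermediate step pinning the minimal exponent to the rank-two layer. The paper gets this by perturbing \emph{inside} $\CA_2$: for $H\in\CA_2$ supersolvability is preserved and $\exp(\CA,m+\bbmo+\delta_H)=(d_1+1,d_2,\dots,d_r)$, so criticality forces the unique minimal exponent to be the entry that moved, i.e.\ $d_1<d_i$ for $i\geq2$. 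You instead argue structurally via the inclusion $D(\CA,m+\bbmo)\subseteq D(\CA_2,(m+\bbmo)|_{\CA_2})$ and the splitting of the latter into its rank-two block in $\partial_{x_1},\partial_{x_2}$ plus $S\partial_{x_3}\oplus\dots\oplus S\partial_{x_\ell}$, using Saito's criterion (via the $S$-independence of the $\nabla_{\partial_{x_i}}\theta$) to rule out that $\theta$ lies entirely in the complementary free directions; this forces $d_1\leq d+1$, and $d_1\geq d+1$ comes from $D(\CA,m+\bbmo)_{<d+1}=(0)$. Your version buys a slightly stronger structural fact — the universal derivation must have a nontrivial component along the rank-two layer — and avoids invoking Theorem \ref{theorem: supersolvable exponents} for the perturbed multiplicities on $\CA_2$; the paper's version is shorter but leaves the inference ``hence $d_1$ is the unique minimum'' more implicit. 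Both rest on the same criticality input (Theorem \ref{theorem: alternative universal criterion}, resp.\ Lemma \ref{lemma20}), so neither is more general than the other.
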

\begin{proof}
     Let $(\CA,m+\bbmo)$ be supersolvable, $\rank(\CA)=r\geq3$, $$\exp(\CA,m+\bbmo)=(d_1,d_2,d_3,\dots,d_r,0,0,\dots,0),$$ and choose a supersolvable filtration $$(\CA_1,m_1+\bbmo) \subset (\CA_2,m_2+\bbmo)\subset\dots \subset (\CA_r,m_r+\bbmo) = (\CA,m+\bbmo)$$ for $(\CA,m+\bbmo)$ as in Theorem \ref{theorem: supersolvable exponents}. Let $\exp(\CA_2,m_2+\bbmo)=(d_1,d_2)$ and assume that there exists an $m$-universal derivation $\theta \in D(\A,m+\bbmo)$. Note that for all $H\in\CA_2$ increasing the multiplicity of $H$ by $1$ preserves the supersolvability of $(\CA,m+\bbmo)$. Therefore,  by Theorem \ref{theorem: supersolvable exponents}, the multiarrangement $(\CA,m+\bbmo+\delta_H)$ is free with $$\exp(\CA,m+\bbmo+\delta_H)=(d_1+1,d_2,d_3,\dots,d_r,0,\dots,0),$$ for all $H\in\CA_2$. Since $(\A,m+\bbmo)$ admits a universal derivation, it follows from Theorem \ref{theorem: alternative universal criterion} that $(\A,m+\bbmo)$ is $d_1$-critical and $d_1<d_i$ for $2\leq i\leq r$. Now let $H\in\CA\backslash\CA_2$ be arbitrary. If $(\CA,m+\bbmo+\delta_H)$ is still supersolvable, then again by Theorem \ref{theorem: supersolvable exponents} there exists a $3\leq k\leq r$ such that $$\exp(\CA,m+\bbmo+\delta_H)=(d_1,d_2,\dots,d_{k-1},d_k+1,d_{k+1},\dots,d_r,0,\dots,0).$$ Since $d_1<d_k$, this shows that $D(\CA,m+\bbmo+\delta_H)_{d_1}\neq (0)$ which contradicts the criticality. 
     So $(\CA,m+\bbmo+\delta_H)$ is not supersolvable for the chosen filtration.
\end{proof}

We list further restrictions on the cardinality and parity for multiplicities to admit a universal derivation.

\begin{lemma}
   Let $(\CA,m+\bbmo)$ be supersolvable in $V=\mathbb{K}^\ell$ with a supersolvable filtration $\CA_1\subset\CA_2\subset\dots\subset\CA_r=\CA$ and notation as in Theorem \ref{theorem: supersolvable exponents}. Suppose $(\CA,m+\bbmo)$ admits an $m$-universal derivation. Then:
    \begin{enumerate}
        \item The multiplicity $m_2+\bbmo$ on $\CA_2$ is balanced.
        \item $\vert m_2\vert$ is even and $\exp(\CA_2,m_2)=\left(\frac{\vert m_2\vert}{2},\frac{\vert m_2\vert}{2}\right)$. 
        \item $\vert m_i\vert-\vert m_{i-1}\vert=\frac{\vert m_2\vert}{2}$ for all $i=3,\dots,r$.
        \end{enumerate}
\end{lemma}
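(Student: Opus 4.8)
The plan is to unpack the two consequences of universality supplied by Theorem \ref{theorem: alternative universal criterion} and feed them into the supersolvable exponent formula of Theorem \ref{theorem: supersolvable exponents}. Writing $d+1=\deg\theta$, the criterion gives that $(\A,m)$ is free with $\exp(\A,m)=(d,\ldots,d)$ and that $(\A,m+\bbmo)$ is $(d+1)$-critical; by Corollary \ref{corollary: if universal then critical} the derivation $\theta$ is the unique lower degree element, so $d+1$ is the \emph{unique} minimal exponent of $(\A,m+\bbmo)$. Comparing with the supersolvable formula $\exp(\A,m+\bbmo)=(d_1,d_2,|m_3+\bbmo|-|m_2+\bbmo|,\ldots)$ forces $d_1=d+1$ with $d_1$ strictly below $d_2$ and below every higher level contribution, exactly as recorded in Lemma \ref{lemma: breaking supersolvability for Multi-Euler}. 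I will also use the isomorphism of Proposition \ref{indep} with $\mu=\bbmo$, namely $D(\A)\cong D(\A,m+\bbmo)$ shifting degrees by $d$, which yields the multiset identity $\exp(\A,m+\bbmo)=\exp(\A)+(d,\ldots,d)$.

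For part (1) I would first descend the criticality to the rank two base $\A_2$. For $H\in\A_2$ raising the multiplicity of $H$ preserves supersolvability, so Theorem \ref{theorem: supersolvable exponents} computes $\exp(\A,m+\bbmo+\delta_H)$ with the same higher level contributions and only the base pair $(d_1,d_2)$ replaced by $\exp(\A_2,m_2+\bbmo+\delta_H)$. Since $(\A,m+\bbmo)$ is $(d+1)$-critical we have $D(\A,m+\bbmo+\delta_H)_{d+1}=(0)$, and because the higher exponents already exceed $d+1$ this forces the lower exponent of $\exp(\A_2,m_2+\bbmo+\delta_H)$ to exceed $d+1$ for every $H\in\A_2$. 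Hence $(\A_2,m_2+\bbmo)$ is itself $(d+1)$-critical. If $m_2+\bbmo$ were unbalanced it would lie in an infinite component $\Lambda_{H_0}$, where the lower degree basis element $\prod_{H\neq H_0}\alpha_H\,\partial$ of Lemma \ref{balanced} is unchanged upon adding $\delta_{H_0}$; then the minimal degree of $D(\A_2,m_2+\bbmo+\delta_{H_0})$ would remain $d+1$, contradicting criticality. Therefore $m_2+\bbmo$ is balanced, which is part (1).

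For parts (2) and (3) the key structural input is that in a supersolvable filtration each $\A_i$ is the localization $\A_{X_i}$ of $\A$ at a flat $X_i$ that is modular in $L(\A)$. I would invoke that the localization of the free multiarrangement $(\A,m)$ at such a modular flat is again free, with exponents forming a sub-multiset of $\exp(\A,m)=(d,\ldots,d)$; consequently $\exp(\A_i,m_i)=(d,\ldots,d)$ ($i$ copies) and $|m_i|=i\,d$ for every $i$. Taking $i=2$ gives $\exp(\A_2,m_2)=(d,d)$, so $|m_2|=2d$ is even and $\exp(\A_2,m_2)=(\tfrac{|m_2|}{2},\tfrac{|m_2|}{2})$, which is part (2); for $i\geq 3$ it gives $|m_i|-|m_{i-1}|=i\,d-(i-1)d=d=\tfrac{|m_2|}{2}$, which is part (3).

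The main obstacle is precisely this localization step, because universality is a global phenomenon that does \emph{not} descend to $\A_i$: on the localization the derivations $\nabla_{\partial_{x_j}}\theta$ have the wrong total degree to form a basis, and $(\A_i,m_i+\bbmo)$ fails to be $(d+1)$-critical once $i\geq 3$, since adding $\delta_H$ for $H$ in a higher level does not raise the minimal degree. Working only from the multiset identity $\exp(\A,m+\bbmo)=\exp(\A)+(d,\ldots,d)$ together with the supersolvable formula, an application of Saito's criterion (Theorem \ref{Saito}) to the independent family $\nabla_{\eta_1}\theta,\ldots,\nabla_{\eta_i}\theta$ lying in $D(\A_i,m_i+\bbmo)$, where $\eta_1,\ldots,\eta_i$ is a filtration-adapted basis of $D(\A)$, yields only the one-sided bound $|m_i|\leq i\,d$. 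The matching lower bound, and hence the exact per-level values, genuinely requires the modular-flat exponent comparison and cannot be read off from the exponent multiset alone.
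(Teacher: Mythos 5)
Your argument for part (1) is correct but much more roundabout than necessary: you descend criticality to $(\CA_2,m_2+\bbmo)$ and then rule out unbalancedness via the infinite-component structure. The paper instead observes that if $m_2+\bbmo$ is unbalanced at $H$ then so is $m_2$, whence $\exp(\CA_2,m_2)=\bigl(\sum_{H'\neq H}m_2(H'),\,m_2(H)\bigr)$ has distinct entries; since Theorem \ref{theorem: alternative universal criterion} forces $\exp(\CA,m)=(d,\dots,d)$ and the supersolvable exponent formula exhibits $\exp(\CA_2,m_2)$ inside $\exp(\CA,m)$, this is an immediate contradiction. Both routes work, but yours imports the peak-point machinery where a one-line exponent comparison suffices.

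For parts (2) and (3) there is a genuine gap. Your entire derivation rests on the claim that the localization of the free multiarrangement $(\CA,m)$ at the modular flat underlying $\CA_i$ is free \emph{with exponents forming a sub-multiset of} $\exp(\CA,m)$. Freeness of localizations is fine, but the sub-multiset statement for \emph{multi}arrangements at modular flats is not proved in the paper, is not among the cited results, and you offer no argument for it; you even flag it as the unavoidable crux, asserting that without it one only gets the one-sided bound $|m_i|\le i\,d$. That assertion is false, and it is where your proof goes wrong: the inequalities in Theorem \ref{theorem: supersolvable exponents} are preserved when every multiplicity is decreased by $1$ (if $(m+\bbmo)(H'')\ge\sum(m+\bbmo)(H)-1$ over $k\ge 1$ hyperplanes, then $m(H'')\ge\sum m(H)+k-2\ge\sum m(H)-1$), so $(\CA,m)$ is supersolvable \emph{for the same filtration} and the theorem applies to it directly, giving
\begin{equation*}
\exp(\CA,m)=\bigl(\exp(\CA_2,m_2),\,|m_3|-|m_2|,\dots,|m_r|-|m_{r-1}|\bigr).
\end{equation*}
Equating this with $(d,\dots,d)$ from Theorem \ref{theorem: alternative universal criterion} yields $\exp(\CA_2,m_2)=\bigl(\tfrac{|m_2|}{2},\tfrac{|m_2|}{2}\bigr)$, the parity of $|m_2|$, and $|m_i|-|m_{i-1}|=\tfrac{|m_2|}{2}$ all at once — no localization or modular-flat input is needed. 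To repair your write-up, replace the modular-flat appeal by this direct application of the supersolvable exponent formula to $(\CA,m)$ (or, if you insist on localizing, justify the sub-multiset claim by applying Theorem \ref{theorem: supersolvable exponents} to the truncated filtration $\CA_1\subset\dots\subset\CA_i$ of $(\CA_i,m_i)$, whose hypotheses are inherited).
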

\begin{proof}
    1. Suppose that $(\CA_2,m_2+\bbmo)$ is not balanced and let $H\in\CA_2$ such that $(m_2+\bbmo)(H)> \sum_{H'\in \CA_2\backslash\{H\}}(m_2+\bbmo)(H')$. Then $$\exp(\CA_2,m_2+\bbmo)=\left(\sum_{H'\in \CA_2\backslash\{H\}}(m_2+\bbmo)(H'),(m_2+\bbmo)(H)\right)$$
    and 
    $$\exp(\CA_2,m_2)=\left(\sum_{H'\in \CA_2\backslash\{H\}}m_2(H'),m_2(H)\right),$$ since $(\CA_2,m_2)$ is still not balanced. In particular, we cannot have equal exponents for $(\CA_2,m_2)$.\\
    2. For any free multiarrangement $(\CA,m)$, the sum of its exponents $\exp(\CA,m)$ equals $\vert m\vert$. So $\vert m_2\vert$ has to be even as the exponents of $(\CA,m_2)$ need to be equal.\\
    3. If $(\CA,m+\bbmo)$ is supersolvable, then $(\CA,m)$ is still supersolvable. So $$\exp(\CA,m)=(\exp(\CA_2,m_2),\vert m_3\vert-\vert m_2\vert,\dots,\vert m_r\vert-\vert m_{r-1}\vert).$$ For $D(\CA,m+\bbmo)$ to have an $m$-universal derivation we require that all exponents are equal. So $\exp(\CA_2,m_2)=(\frac{\vert m_2\vert}{2},\frac{\vert m_2\vert}{2})$ and $$\frac{\vert m_2\vert}{2}=\vert m_3\vert-\vert m_2\vert=\dots=\vert m_r\vert-\vert m_{r-1}\vert$$ has to hold. Therefore, $\vert m_i\vert-\vert m_{i-1}\vert=\frac{\vert m_2\vert}{2}$ for all $i=3,\dots,r$.
\end{proof}

\subsection{Deleted $A_3$}
Now we demonstrate how Theorems \ref{theorem: alternative universal criterion} and \ref{theorem: supersolvable exponents} can be utilized to show the existence of universal derivations. We start by investigating the deleted $A_3$ arrangement. All free multiplicities on the deleted $A_3$ arrangement were classified by Abe.

\begin{theorem}[{\cite[Thm.~0.2]{A07}}]\label{theorem: del A3 classification}
    Let $\A$ be the deleted $A_3$ arrangement given by $Q(\A) = xy(x-y)(x-z)(y-z)$. 
    Consider the multiarrangement $(\A,m+\bbmo)$ given by $$Q(\A,m+\bbmo)=(y-z)^ay^b(x-y)^cx^d(x-z)^e.$$
    Then $(\A,m+\bbmo)$ is free if and only if $c\geq a+e-1$ or $c\geq b+d-1$.
\end{theorem}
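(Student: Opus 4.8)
The plan is to derive the classification from the \emph{supersolvable} structure of the deleted $A_3$ arrangement. Writing $H_1=\ker(y-z),\,H_2=\ker y,\,H_3=\ker(x-y),\,H_4=\ker x,\,H_5=\ker(x-z)$ with respective multiplicities $a,b,c,d,e$, the crucial geometric observation is that $\A$ has exactly two triple points, $P_1=H_2\cap H_3\cap H_4$ (carrying $b,c,d$) and $P_2=H_1\cap H_3\cap H_5$ (carrying $a,c,e$), while its remaining four rank-two flats are ordinary double points. A direct check of the intersection lattice shows that $P_1$, and symmetrically $P_2$, is a \emph{modular} point: every other rank-two flat is joined to it by a hyperplane of $\A$. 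Hence $\A$ is supersolvable, with the line $H_3$ lying on \emph{both} triple points serving as a common "spine".

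For the sufficiency direction I would apply Theorem \ref{theorem: supersolvable exponents} to the filtration $\A_1\subset\A_2=\{H_2,H_3,H_4\}\subset\A_3=\A$ with multiplicity $m+\bbmo$. The hyperplanes added in the last step are $H_1,H_5$, and for every pair $(H',H'')$ with $H'\in\{H_1,H_5\}$, $H''\in\A_2$ the flat $X=H'\cap H''$ is an ordinary double point (so $\A_X=\{H',H''\}$), \emph{except} when $H''=H_3$, where $X=P_2$ and $\A_{P_2}=\{H_1,H_3,H_5\}$. Thus the sole nontrivial hypothesis of Theorem \ref{theorem: supersolvable exponents} reads $m(H_3)\ge m(H_1)+m(H_5)-1$, i.e.\ $c\ge a+e-1$, and under it $(\A,m+\bbmo)$ is free with $\exp(\A,m+\bbmo)=(d_1,d_2,a+e)$, where $(d_1,d_2)=\exp(\{H_2,H_3,H_4\},(b,c,d))$ is supplied by Theorem \ref{theorem: Wakamiko A2 classification}. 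Running the symmetric filtration through $P_2$ yields freeness under $c\ge b+d-1$, completing this implication.

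For the necessity direction I would argue by contraposition, assuming $c\le a+e-2$ and $c\le b+d-2$, and exploiting the Euler restriction of Definition \ref{def:Euler}. Restricting $(\A,m+\bbmo)$ along $H_3$ collapses $\{H_1,H_5\}$ onto $P_2$ and $\{H_2,H_4\}$ onto $P_1$, so $\A''$ consists of just two lines (the images of $P_1,P_2$) and is automatically free with $\exp(\A'',m^*)=(m^*(P_1),m^*(P_2))$, each $m^*(P_i)$ being read off from its three-line localization via Theorem \ref{theorem: Wakamiko A2 classification}. Were $(\A,m+\bbmo)$ free with exponents $(e_1,e_2,e_3)$, an addition--deletion/division compatibility would force $\{m^*(P_1),m^*(P_2)\}$ to be two of the $e_i$. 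I would then combine this with the restriction along a second hyperplane (e.g.\ $H_1$, whose restriction has the three points $H_1\cap H_2$, $P_2$, $H_1\cap H_4$) and show that, once both inequalities fail, the two resulting systems of exponent constraints cannot be met simultaneously; equivalently, one isolates the unique candidate exponent triple and checks via Saito's criterion (Theorem \ref{Saito}) that the associated determinant cannot equal $c\cdot Q(\A,m+\bbmo)$.

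The main obstacle is exactly this necessity direction, for two reasons. First, the precise evaluation of the Euler multiplicities $m^*(P_i)$ at the triple points requires the full case analysis of Theorem \ref{theorem: Wakamiko A2 classification}---including the parity split and the boundary case $k_3=k_1+k_2-1$---together with an identification of which basis element of the rank-two localization is divisible by $\alpha_{H_3}$. Second, a rigorous non-freeness conclusion must be secured, since the addition--deletion theorem for multiarrangements has no automatic converse. I expect the cleanest route is to reduce the argument to the contrapositive of the sufficiency computation, tracking how $\exp(\A,m+\bbmo)$ is forced to jump as $c$ crosses the thresholds $a+e-1$ and $b+d-1$, and verifying that below both thresholds no free exponent triple is compatible with both restrictions.
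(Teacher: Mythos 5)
You should first be aware that the paper does not prove this statement at all: it is imported verbatim from \cite{A07}, so your attempt can only be measured against the cited source and against internal consistency. Your sufficiency direction is correct, and it is essentially the mechanism the paper itself exploits later in its proof of Theorem \ref{theorem: Multieuler for deleted A3}: the two triple points $P_1=H_2\cap H_3\cap H_4$ and $P_2=H_1\cap H_3\cap H_5$ are modular, all other rank-two flats are double points, and applying Theorem \ref{theorem: supersolvable exponents} to the two resulting filtrations reduces the hypotheses to the single inequality $c\geq a+e-1$ (resp.\ $c\geq b+d-1$), giving freeness with third exponent $a+e$ (resp.\ $b+d$). That half of your argument is sound.

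The necessity direction, however, has a genuine gap, and it sits exactly where you flagged discomfort. Your pivotal claim --- that freeness of $(\A,m+\bbmo)$ with exponents $(e_1,e_2,e_3)$ would ``force $\{m^*(P_1),m^*(P_2)\}$ to be two of the $e_i$'' --- is not a theorem for multiarrangements: there is no restriction theorem and no division theorem in this category, and the addition--deletion theorem of \cite{ATW2} needs \emph{two} of the three freeness-with-matching-exponents statements as input. Worse, your claim proves too much: if it held for the restriction along $H_3$, then whenever $(\A,m+\bbmo)$ is free, statements (1) and (3) of addition--deletion would be aligned and would force the deletion at $H_3$ (multiplicities $(a,b,c-1,d,e)$) to be free as well; iterating, freeness would persist as $c$ is lowered all the way down with $a,b,d,e$ fixed, flatly contradicting the classification you are trying to prove (e.g.\ $(2,3,3,3,2)$ is free since $c=a+e-1$, with exponents $(4,4,5)$, while $(2,3,2,3,2)$ is non-free). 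So the principle is false, not merely unproved, and it also underlies your fallback plan of ``verifying that no free exponent triple is compatible with both restrictions.'' Your last resort via Theorem \ref{Saito} cannot close the gap either: Saito's criterion certifies freeness when some determinant equals $c\cdot Q(\A,m+\bbmo)$, but the failure of one candidate determinant proves nothing, and non-freeness for the infinite family $c\leq\min(a+e,b+d)-2$ cannot be established determinant by determinant. A genuine non-freeness argument is required here --- this is precisely the hard content of Abe's original proof in \cite{A07}, which goes beyond addition--deletion bookkeeping (cf.\ his stability analysis of $A_2$-type families in \cite{A1}) --- and your sketch does not supply it.
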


Combining Theorems \ref{theorem: supersolvable exponents} and \ref{theorem: del A3 classification}, we can now prove Theorem \ref{theorem: Multieuler for deleted A3}.

\begin{proof}[Proof of Theorem \ref{theorem: Multieuler for deleted A3}.]
We have the following supersolvable filtrations for $\A$: $$\{x=0\}\subset\{xy(x-y)=0\}\subset\{xy(x-y)(x-z)(y-z)=0\}$$ and $$\{(x-y)=0\}\subset\{(x-y)(x-z)(y-z)=0\}\subset\{xy(x-y)(x-z)(y-z)=0\},$$ which we combine with Theorem \ref{theorem: supersolvable exponents} to calculate the exponents of $(\A,m+\bbmo)$. As in Theorem \ref{theorem: supersolvable exponents}, we denote a chosen supersolvable chain for $\A$ by $\A_1\subset\A_2\subset\A_3$. Theorem \ref{theorem: del A3 classification} shows that $(\A,m+\bbmo)$ is free if and only if $(\A,m+\bbmo)$ is supersolvable with respect to a choice of one of the two given filtrations above. In particular, the simple arrangement $(\A,\bbmo)$ is free with exponents $(1,2,2)$. 

By definition of an $m$-universal derivation $\theta\in D(\A,m+\bbmo)$ the module $D(\A,m)$ has to be free and the exponents of $D(\A,m)$ have to all be equal. Since $D(\A,m)$ is free if and only if it is supersolvable, we can use Theorem \ref{theorem: supersolvable exponents} to calculate $\exp(\A,m)$ and have to demand that $$c-1=a+e-2=b+d-2,$$ else $(\A,m)$ does not have all exponents equal. But this is equivalent to requiring that $c=a+e-1=b+d-1$. It is left to show that this restriction on $m+\bbmo$ is sufficient.

So let $(\A,m+\bbmo)$ be such that $c+1=a+e=b+d$. Using the second part of Theorem \ref{theorem: supersolvable exponents} we deduce that $$\exp(\A,m+\bbmo)=(c,c+1,c+1).$$ Note that $(\A,m)$ and $(\A,m+\bbmo+\delta_H)$ are still supersolvable multiarrangements for an arbitrary $H\in\A$. This follows immediately for $(\A,m)$, since each of the inequalities of Theorem \ref{theorem: supersolvable exponents} is still satisfied if we reduce the multiplicity of every hyperplane in $(\A,m+\bbmo)$ by $1$. For $(\A,m+\bbmo+\delta_H)$ and $H=\ker x$, the inequalities are still satisfied. Finally, if $H\neq \ker x$, then choose a filtration such that $H\in\A_2$, which once again ensures that the inequalities are satisfied. Then apply Theorem \ref{theorem: supersolvable exponents} to derive that $$\exp(\A,m)=(c-1,c-1,c-1)\quad\text{and}\quad\exp(\A,m+\bbmo+\delta_H)=(c+1,c+1,c+1)$$ for all $H\in\A$, so $(\A,m+\bbmo)$ is $c$-critical. Now Theorem \ref{theorem: alternative universal criterion} implies that a basis element $\theta\in D(\A,m+\bbmo)$ with $\deg\theta=c$ is $m$-universal.\end{proof}

\subsection{Braid arrangements}

In \cite{DFMS} DiPasquale, Francisco, Mermin, and Schweig showed that all free multiplicities on the Coxeter arrangement of type $A_3$ are either so called ANN-multiplicities, see \cite[Thm.~0.3]{ANN}, or supersolvable. 
We now determine all $m$-universal derivations for the braid arrangement of rank three in the supersolvable case.

\begin{proposition}\label{proposition: Multieuler for supersolvable A3}
    Let $$Q(\CA,m+\bbmo)=(x_1-x_2)^a(x_1-x_3)^b(x_1-x_4)^c(x_2-x_3)^d(x_2-x_4)^e(x_3-x_4)^f$$ be an $A_3$ multiarrangement.  
    Assume that $(\CA,m+\bbmo)$ satisfies the conditions of Theorem \ref{theorem: supersolvable exponents} for a suitable supersolvable filtration $\CA_1\subset \CA_2\subset \CA_3=\CA$ for $(\CA,m+\bbmo)$. Then there exists an $m$-universal derivation if and only if all of the inequalities in Theorem \ref{theorem: supersolvable exponents} are identities. 
\end{proposition}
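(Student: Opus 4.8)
The plan is to mirror the proof of Theorem \ref{theorem: Multieuler for deleted A3}, with the deleted $A_3$ replaced by the full braid arrangement. Write $H_{ij}=\ker(x_i-x_j)$, so that $a,b,c,d,e,f$ are the $(m+\bbmo)$-multiplicities of $H_{12},H_{13},H_{14},H_{23},H_{24},H_{34}$. After relabelling the coordinates we may assume the suitable filtration is $\A_1=\{H_{12}\}\subset\A_2=\{H_{12},H_{13},H_{23}\}\subset\A_3=\A$. First I would unwind Theorem \ref{theorem: supersolvable exponents} for this filtration. Among the nine pairs $(H',H'')$ with $H'\in\A\setminus\A_2$ and $H''\in\A_2$, exactly the three triple points $X_{124},X_{134},X_{234}$ have $|\A_X|=3$; the remaining three pairs meet in double points, where the alternative $\A_X=\{H',H''\}$ applies. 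Reading off the inequality of Theorem \ref{theorem: supersolvable exponents} at each triple point gives
\[
a\ge c+e-1,\qquad b\ge c+f-1,\qquad d\ge e+f-1,
\]
so that ``all inequalities are identities'' means $a=c+e-1$, $b=c+f-1$, $d=e+f-1$.

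For necessity, suppose an $m$-universal $\theta$ exists. By Theorem \ref{theorem: alternative universal criterion}, $(\A,m)$ is free with $\exp(\A,m)=(g,g,g)$, where $g=\deg\theta-1$. Subtracting $\bbmo$ from each multiplicity preserves the three inequalities above, so $(\A,m)$ is supersolvable for the same filtration and Theorem \ref{theorem: supersolvable exponents} yields $\exp(\A,m)=\bigl(\exp(\A_2,(a-1,b-1,d-1)),\,c+e+f-3\bigr)$. Equality of all three exponents forces the rank-two part to be $(c+e+f-3,c+e+f-3)$, whence
\[
a+b+d-3=2(c+e+f-3)=(c+e-1)+(c+f-1)+(e+f-1)-3 .
\]
Thus $a+b+d$ equals the sum of the three lower bounds; since each summand dominates its bound, every inequality must be an equality, giving the identities.

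For sufficiency, assume the identities. Then $(a,b,d)=(c+e-1,c+f-1,e+f-1)$ is balanced with odd total $2(c+e+f)-3$, so Theorem \ref{theorem: Wakamiko A2 classification} gives $\exp(\A_2,(a,b,d))=(c+e+f-2,c+e+f-1)$ and hence, by Theorem \ref{theorem: supersolvable exponents},
\[
\exp(\A,m+\bbmo)=(g+1,g+2,g+3),\qquad g:=c+e+f-3 .
\]
Subtracting $\bbmo$ keeps the filtration supersolvable and produces $\exp(\A,m)=(g,g,g)$; in particular $\ell\,(\deg\theta-1)=3g=|m|$, and $D(\A,m+\bbmo)_{g+1}=\K\theta_1$ is one dimensional. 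Since $\nabla_{\partial_{x_i}}\theta_1\in D(\A,m)$ automatically for $\theta_1\in D(\A,m+\bbmo)$, Proposition \ref{indep} reduces $m$-universality of $\theta_1$ to $S$-independence of the three $\nabla_{\partial_{x_i}}\theta_1$; equivalently, as $(\A,m)$ is free with equal exponents, Theorem \ref{theorem: alternative universal criterion} reduces the claim to $(g+1)$-criticality of $(\A,m+\bbmo)$, i.e. $D(\A,m+\bbmo+\delta_H)_{g+1}=(0)$ for all $H$.

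The main obstacle is precisely this criticality at the outer hyperplanes $H_{14},H_{24},H_{34}$. For each $H\in\A_2$, raising its multiplicity preserves supersolvability of the chosen filtration, and Theorem \ref{theorem: supersolvable exponents} then pushes the minimal exponent up to $g+2$, killing degree $g+1$; the same re-selection trick settles an outer hyperplane as soon as one of its two partner multiplicities equals $1$, since then a filtration placing that hyperplane into $\A_2$ stays supersolvable. What remains is the regime $c,e,f\ge 2$, where no supersolvable filtration of $(\A,m+\bbmo+\delta_H)$ survives and I would abandon freeness of the augmented arrangement. Because $D(\A,m+\bbmo)_{g+1}=\K\theta_1$ is one dimensional, the standard Euler restriction sequence along $H$ (Definition \ref{def:Euler}),
\[
0\to D(\A,m+\bbmo+\delta_H)\to D(\A,m+\bbmo)\xrightarrow{\ \rho\ } D(\A''_H,(m+\bbmo+\delta_H)^*),
\]
reduces $(g+1)$-criticality at $H$ to the single assertion $\rho(\theta_1)\neq 0$. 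The restriction $(\A''_H,(m+\bbmo+\delta_H)^*)$ is a rank-two multiarrangement on three points; I would compute its Euler multiplicities, determine its minimal degree via Theorem \ref{theorem: Wakamiko A2 classification}, and use the rank-two theory of Theorem \ref{theorem: rank two universal criterion} to see that $\theta_1$ restricts nontrivially. Alternatively one may invoke the classification of free $A_3$-multiplicities quoted before the proposition to decide whether $(\A,m+\bbmo+\delta_H)$ is an ANN-multiplicity and read off that its minimal exponent exceeds $g+1$. I expect the bookkeeping of these Euler multiplicities in the symmetric regime $c,e,f\ge 2$ to be the technical heart of the argument.
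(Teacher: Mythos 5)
Your reduction to $(g+1)$-criticality, the necessity argument via summing the three inequalities against $a+b+d=2(c+e+f)-3$, and the treatment of the hyperplanes in $\CA_2$ by re-choosing the filtration all match the paper. The gap is in the one step you defer to ``bookkeeping'': the criticality at the outer hyperplanes $H\in\{H_{14},H_{24},H_{34}\}$. First, the exact sequence you invoke is not the one from \cite{ATW2}; the correct sequence for the triple with top term $(\CA,m+\bbmo+\delta_H)$ is
\[
0\to D(\CA,m+\bbmo)\xrightarrow{\ \cdot\alpha_H\ } D(\CA,m+\bbmo+\delta_H)\xrightarrow{\ \rho\ } D(\CA^H,(m+\bbmo+\delta_H)^*),
\]
so the kernel of the Euler restriction is $\alpha_H\,D(\CA,m+\bbmo)$, not a copy of $D(\CA,m+\bbmo+\delta_H)$ inside $D(\CA,m+\bbmo)$. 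Second, and more seriously, your proposed criterion ``$\rho(\theta_1)\neq 0$'' points in the wrong direction: computing the Euler multiplicities at the three triple points using the identities $a=c+e-1$, $b=c+f-1$, $d=e+f-1$ gives (for $H=H_{14}$, say) $(m+\bbmo+\delta_H)^*=(a+1,b+1,d)$, which is balanced of odd order $a+b+d+2$, so by Theorem \ref{theorem: Wakamiko A2 classification} the restriction module has exponents $(g+2,g+3)$ and its degree-$(g+1)$ part vanishes. Hence every degree-$(g+1)$ element of $D(\CA,m+\bbmo+\delta_H)$ lies in $\alpha_H\,D(\CA,m+\bbmo)_{g}=(0)$; that vanishing of the target, not a nonvanishing of $\rho(\theta_1)$, is what yields $D(\CA,m+\bbmo+\delta_H)_{g+1}=(0)$. (Had your sequence been correct, $\rho(\theta_1)$ would be forced to be $0$ for degree reasons and your argument would conclude the opposite of what is needed.) This Euler-multiplicity computation is exactly the content of the paper's proof and cannot be left as an expectation; once it is carried out, the appeal to ANN-multiplicities or to Theorem \ref{theorem: rank two universal criterion} is unnecessary.
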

\begin{proof}
   To apply Theorem \ref{theorem: supersolvable exponents} we fix the following supersolvable filtration 
   $$\{(x_1-x_2)=0\}\subset\{(x_1-x_2)(x_1-x_3)(x_2-x_3)=0\}$$ $$\subset\{(x_1-x_2)(x_1-x_3)(x_2-x_3)(x_1-x_4)(x_2-x_4)(x_3-x_4)=0\}.$$
   Note, that we can obtain other supersolvable filtrations by permuting the $x_i$ by an element of $S_4$. Recalling that $H_{ij} = \ker(x_i-x_j)$, the conditions of Theorem \ref{theorem: supersolvable exponents} read as follows:
   \begin{align*}
   (m+\bbmo)(H_{12})\geq (m+\bbmo)(H_{14})+(m+\bbmo)(H_{24})-1 &\iff a \geq c+e-1 \text{ and}\\
   (m+\bbmo)(H_{13})\geq (m+\bbmo)(H_{14})+(m+\bbmo)(H_{34})-1 &\iff b \geq c+f-1 \text{ and}\\
   (m+\bbmo)(H_{23})\geq (m+\bbmo)(H_{24})+(m+\bbmo)(H_{34})-1 &\iff d \geq e+f-1.
   \end{align*}
   \noindent Suppose that there exists an $m$-universal derivation. In particular there exists a $d_1\in \mathbb{Z}_{>0}$ such that:
  \begin{enumerate}
      \item $(\CA,m+\bbmo)$ is free with exponents $(d_1+1,d_2,d_3)$, where $d_1+1<d_2,d_3$. 
      \item $(\CA,m)$ is free with exponents $(d_1,d_1,d_1)$.
  \end{enumerate}

    \noindent Condition (2) combined with Theorem \ref{theorem: supersolvable exponents} implies that 
    \begin{equation}\label{eq:necessary}
        \frac{a+b+d-3}{2}=c+e+f-3\iff a+b+d=2c+2e+2f-3
    \end{equation}  has to hold and in particular $a+b+d$ is odd.  
    Thanks to the inequalities above, we have 
    $$a+b+d\geq (c+e-1)+(c+f-1)+(e+f-1)=2c+2e+2f-3.$$
   Combined with (\ref{eq:necessary}), this implies that all of the inequalities required for supersolvability have to be identities. 
    
   It is left to show that (assuming that $(\CA,m+\bbmo)$ is supersolvable) the condition (\ref{eq:necessary}) is not only necessary but sufficient for $(\CA,m)$ to admit an $m$-universal derivation. We do this by showing that for all $H\in\CA$, the multiarrangement $(\CA,m+\bbmo+\delta_H)$ is free and $d_1+1\not\in\exp(\CA,m+\bbmo+\delta_H)$. This then proves that $D(\CA,m+\bbmo)$ is $(d_1+1)$-critical, so Theorem \ref{theorem: alternative universal criterion} implies the existence of an $m$-universal derivation.
   So for the rest of the proof we assume $$a = c+e-1, b = c+f-1, \text{ and } d = e+f-1.$$
   Since all multiplicities are positive, this shows $a+b>d, a+d>b, b+d>a$  and we can use Theorem \ref{theorem: Wakamiko A2 classification} to calculate $\exp(\CA_2,m_2)$ and get $\exp(\CA_2,m_2)=\left(\frac{a+b+d-1}{2},\frac{a+b+d+1}{2}\right)$. Theorem \ref{theorem: supersolvable exponents} yields $$\exp(\CA,m+\bbmo)=\left(\frac{a+b+d-1}{2},\frac{a+b+d+1}{2},\frac{a+b+d+3}{2}\right)$$ and $\exp(\CA,m)=(\frac{a+b+d-3}{2},\frac{a+b+d-3}{2},\frac{a+b+d-3}{2})$.\\
   It is left to show that $\exp(\CA,m+\bbmo+\delta_H)=(\frac{a+b+d+1}{2},\frac{a+b+d+1}{2},\frac{a+b+d+3}{2})$ for all $H\in\CA$.\\
   If $H\in\{\ker(x_1-x_2),\ker(x_1-x_3),\ker(x_2-x_3)\}$, then $(\CA,m+\bbmo+\delta_H)$ is still supersolvable since all of the inequalities are still satisfied and Theorem \ref{theorem: supersolvable exponents} implies $\exp(\CA,m+\bbmo+\delta_H)=(\frac{a+b+d+1}{2},\frac{a+b+d+1}{2},\frac{a+b+d+3}{2})$.\\
   Now let $H\in\{\ker(x_1-x_4),\ker(x_2-x_4),\ker(x_3-x_4)\}, H'\in\CA\backslash\{H\}, X=H\cap H'$.
   \noindent We make use of Theorem \ref{theorem: Wakamiko A2 classification} to derive the following concerning the Euler restriction $(m+\bbmo+\delta_H)^*$ with respect to $H$ on $\A$:
   \begin{enumerate}
    \item If $\CA_X=\{H,H'\}$, then we 
    have $(m+\bbmo+\delta_H)^*(X)=(m+\bbmo)(H')$. 
    \item If $\CA_X=\{(x_1-x_2),(x_1-x_4),(x_2-x_4)\}$, then use the equality $a=c+e-1$ to see that $\exp(\CA_X,(m+\bbmo)_X)=(a,c+e)=(a,a+1)$, $\exp(\CA_X,(m+\bbmo+\delta_H)_X)=(a+1,a+1)$ and therefore $(m+\bbmo+\delta_H)^*(X)=a+1=c+e$ is independent from the choice of $H$.
    \item Analogous, if $\CA_X=\{(x_1-x_3),(x_1-x_4),(x_3-x_4)\}$, then use the identity $b=c+f-1$ to see that $(m+\bbmo+\delta_H)^*(X)=b+1=c+f$ and if $\CA_X=\{(x_2-x_3),(x_2-x_4),(x_3-x_4)\}$, then use $d=e+f-1$ to infer that $(m+\bbmo+\delta_H)^*(X)=d+1=e+f$.
   \end{enumerate}

   \noindent Now let $H=\ker(x_1-x_4)$. Then we have the following three localizations $\CA_X$: $$\{H,\ker(x_1-x_2),\ker(x_2-x_4)\},\{H,\ker(x_1-x_3),\ker(x_3-x_4)\},\{H,\ker(x_2-x_3)\}.$$
   Fixing this order of localizations we have $(m+\bbmo)_X=(c,a,e),(c,b,f),(c,d)$. Now use the results of the discussion above to see that $(m+\bbmo+\delta_H)^*=(a+1,b+1,d)$. So since $\vert(m+\bbmo+\delta_H)^*\vert=a+b+d+2$, the sum $a+b+d$ is an odd number and the multiplicity is balanced on $(\CA_2,m_2)$. Use Theorem \ref{theorem: Wakamiko A2 classification} once more to calculate $$\exp(\CA^H,(m+\bbmo+\delta_H)^*)=\left(\frac{a+b+d+1}{2},\frac{a+b+d+3}{2}\right),$$ as required.
   
   \noindent The calculations for all remaining hyperplanes $H\in\CA$ work analogously. This proves the existence of an $m$-universal derivation. The arguments for the other supersolvable filtrations work the same way. That such a multiplicity $m+\bbmo$ which meets the requirements of the theorem exists is obvious. 
   \end{proof}

Proposition \ref{proposition: Multieuler for supersolvable A3} gives Theorem \ref{A3multiEuler}. This  provides examples of the following kind.

\begin{example}
\label{twocomponent}
Let $\A$ be defined as 
$$
Q(\A)\coloneqq xyz(x-y)(y-z)(x-z).
$$
Then it is well-known that 
$$
Q(\A,m+\bbmo)\coloneqq x^3y^3z^3(x-y)^3(y-z)^3(x-z)^3
$$
is free with $\exp(\A,m+\bbmo)=(5,6,7)$, see \cite{T02}. Hence using the Addition-Deletion-Theorem \cite[Thm.~0.8]{ATW2}, we obtain an $m$-universal non-zero homogeneous basis element $\theta_1 \in D(\A,m+\bbmo)$ with $\deg \theta_1=5$. Now the definition of $m$-universal derivations affords a free basis. For $$Q(\A,m_1+\bbmo)\coloneqq x^3y^3z^2(x-y)^3(y-z)^2(x-z)^2,
$$
we have $\exp (\A,m_1+\bbmo) = (4,5,6)$. Now use the Addition-Deletion-Theorem \cite[Thm.~0.8]{ATW2} once again, to confirm that a non-zero homogeneous basis element $\varphi_1 \in D(\A,m_1+\bbmo)$ of degree $4$ is $m_1$-universal. In fact, for 
$$
\varphi_1=(x^4-2x^3y)\partial_x+
(-2xy^3+y^4)\partial_y+
(-3z^4-6xyz^2+4xz^3+4yz^3)\partial_z,
$$
we obtain
\begin{eqnarray*}
\nabla_{\partial_x}\varphi_1&=&(4x^3-6x^2y)\partial_x-2y^3\partial_y+(-6yz^2+4z^3)\partial_z,\\
\nabla_{\partial_y}\varphi_1&=&-2x^3\partial_x+(-6xy^2+4y^3)\partial_y+(-6xz^2+4z^3)\partial_z,\\
\nabla_{\partial_z}\varphi_1&=&-12z(z-x)(z-y)\partial_z.
\end{eqnarray*}
So Saito's criterion entails that the last three derviations form a basis for $D(\A,m_1)$, where 
$$
Q(\A,m_1)\coloneqq x^2y^2z(x-y)^2(y-z)(x-z).
$$
So we can directly check that $\varphi_1$ is $m_1$-universal. Define the constant multiplicity 
$\mathbbm{2}$ on $\A$ by $\mathbbm{2}(H) = 2$ for each $H$ in $\A$.  There are free multiarrangements like $(\A,\mathbbm{2})$ admitting bases which stem from two different sets of $m$-universal derivations as follows:
$$
\langle 
\nabla_{\partial_{x}} \theta_1,\nabla_{\partial_{y}} \theta_1,\nabla_{\partial_{z}} \theta_1
\rangle_S=D(\A,\mathbbm{2}),
$$
as well as,
$$
\langle 
\nabla_{\psi_1} \varphi_1,\nabla_{\partial_{\psi_2}} \varphi_1,\nabla_{\psi_3} \varphi_1
\rangle_S=D(\A,\mathbbm{2}),
$$
where $\psi_1,\psi_2,\psi_3$ form a basis of $D(\B)$ for the subarrangement $\B$ of $\A$ defined by 
$$
Q(\B)=z(y-z)(x-z).
$$
\end{example}

\subsection{$X_3$}
Now we examine the $X_3$ arrangement given by $
Q(\A)=xyz(x+y)(y+z)(x+z)
$ and prove Theorem \ref{theorem: classification universal for X3}. The latter states that $(\A,m)$ does not admit an $m$-universal derivation for any $m$ distinct from the Euler derivation.

\begin{proof}[Proof of Theorem \ref{theorem: classification universal for X3}.]
Owing to \cite{DW}, the multiarrangement $(\A,m)$ is free if and only if 
$$
Q(\A,m)=x^{2n}y^{2n}z^{2n}(x+y)(y+z)(x+z)
$$
for a non-negative integer $n$ and in that case $\exp(\A,m)=(2n+1,2n+1,2n+1)$. Thus, if there exists an $m$-universal derivation, then it belongs to $D(\A,m+\bbmo)$, where 
$$Q(\A,m+\bbmo)=x^{2n+1}y^{2n+1}z^{2n+1}(x+y)^2(y+z)^2(x+z)^2.$$
Subsequently, such an $m$-universal derivation $\theta$ is of the form 
$$
\theta=x^{2n+1}(a_1x+b_1y+c_1z)\partial_x+
y^{2n+1}(a_2x+b_2y+c_2z)\partial_y+
z^{2n+1}(a_3x+b_3y+c_3z)\partial_z,
$$
for $a_i, b_i, c_i \in \K$.
However, an easy computation shows that such a $\theta$ has to satisfy $a_i=b_i=c_i=0$. So $\theta$ satisfies $m$-universality only for $m\equiv 0$, i.e., $\theta=\theta_E$.
\end{proof}

\begin{problem}\label{problem: not free euler}
If $\A$ is not free, then does there exist any multiplicity $m$ on $\A$ which admits a non-Euler $m$-universal derivation $\theta \in D(\A,m+\bbmo)$?
\end{problem}

\subsection{Supersolvable multiarrangements, II}
In this section we investigate supersolvable multiarrangements $(\CA,m+\bbmo)$, where $\rank(\CA)=3$. Fix a supersolvable filtration $$(\CA_1,m_1+\bbmo)\subset(\CA_2,m_2+\bbmo)\subset(\CA_3,m+\bbmo)=(\CA,m+\bbmo)$$ of $(\CA,m+\bbmo)$. First we assume that $\CA_2=\CA(A_2)$ and that $m+\bbmo$ is balanced. The following three cases are to be considered.\\
1. If $\vert \CA\backslash\CA_2\vert=2$, then $\CA$ is the deleted $A_3$ arrangement.\\
2. If $\vert \CA\backslash\CA_2\vert=3$, then $\CA$ is the Coxeter arrangement of type $A_3$.\\
3. If $\vert \CA\backslash\CA_2\vert >3$, then $$Q(\CA)=xy(x-y)\prod_{1\leq i\leq \vert\A\vert-3}(z-a_ix).$$
Note that only the last case needs to be considered due to the results in the earlier sections. For this we require the following result from \cite{ATW2}.

\begin{lemma}[{\cite[Lem.~3.4]{ATW2}}]
\label{lemma: ATW B Polynomial}
    Let $(\CA,m+\bbmo)$ be a multiarrangement. Fix $H_0\in\CA$ and let $m_0\coloneqq(m+\bbmo+\delta_{H_0})(H_0)$. For every $X\in\CA'' := \A^{H_0}$ fix an $H_X\in\CA\backslash\{H_0\}$ such that $X=H_0\cap H_X$ and define $d_X\in\exp(\CA_X,m_X+\bbmo+\delta_H)$ as the unique non-shared exponent of $(\CA_X,m_X+\bbmo)$ and $(\CA_X,m_X+\bbmo+\delta_{H_0})$. Define the polynomial $B=B(\CA'',(m+\bbmo+\delta_{H_0})^*)$ by $$B=\alpha_{H_0}^{m_0-1}\prod_{X\in\CA''}\alpha_{H_X}^{d_X-m_0}.$$
    For any $\theta\in D(\CA,m+\bbmo)$ we have $\theta(\alpha_0)\in(\alpha_0^{m_0},B)$.
\end{lemma}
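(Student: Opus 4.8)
The plan is to strip off the forced power of $\alpha_0$, recast the claim as a divisibility statement on the restriction $H_0$, and then localize at each point of $\CA''$ so that only the rank-two geometry remains. Since $\theta\in D(\CA,m+\bbmo)$ and $(m+\bbmo)(H_0)=m_0-1$, we have $\theta(\alpha_0)\in S\alpha_0^{m_0-1}$; write $\theta(\alpha_0)=\alpha_0^{m_0-1}g$. Setting $C:=\prod_{X\in\CA''}\alpha_{H_X}^{\,d_X-m_0}$ we have $B=\alpha_0^{m_0-1}C$ (this already uses $d_X\ge m_0$, which is guaranteed in the setup so that $B$ is a polynomial). As $S$ is a domain and $\alpha_0^{m_0-1}\ne0$, the membership $\theta(\alpha_0)\in(\alpha_0^{m_0},B)=\alpha_0^{m_0-1}(\alpha_0,C)$ is equivalent to $g\in(\alpha_0,C)$, that is, to $\bar g\in(\bar C)$ in $S''=S/(\alpha_0)$, where bars denote reduction modulo $\alpha_0$. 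The forms $\overline{\alpha_{H_X}}$ cut out the distinct hyperplanes $X$ of the essential arrangement $\CA''$ inside $H_0$ and are hence pairwise coprime in the polynomial ring $S''$, so $\bar C=\prod_X\overline{\alpha_{H_X}}^{\,d_X-m_0}$ divides $\bar g$ if and only if $\overline{\alpha_{H_X}}^{\,d_X-m_0}$ divides $\bar g$ for each $X$ separately.

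\textbf{Step 2 (localization and $S$-linearity).} Fix $X\in\CA''$. Divisibility of $\bar g$ by $\overline{\alpha_{H_X}}^{\,d_X-m_0}$ is a codimension-one condition along $X$, so it may be tested in the localization $\CA_X$; note that $D(\CA,m+\bbmo)\subseteq D(\CA_X,(m+\bbmo)_X)$ because $\CA_X\subseteq\CA$. The pair $(\CA_X,(m+\bbmo)_X)$ has rank two, hence is free, and by Definition \ref{def:Euler} it admits a homogeneous basis $\{\theta_X,\psi_X,\partial_{x_3},\dots,\partial_{x_\ell}\}$ with $\theta_X\notin\alpha_0\Der_S$ and $\psi_X\in\alpha_0\Der_S$, where the coordinates may be chosen so that $\alpha_0,\alpha_{H_X}$ span the essential directions and $\partial_{x_i}(\alpha_0)=0$ for $i\ge3$. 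Writing $\theta=f\theta_X+h\psi_X+\sum_{i\ge3}h_i\partial_{x_i}$ gives $\theta(\alpha_0)=f\,\theta_X(\alpha_0)+h\,\psi_X(\alpha_0)$. Since $(\alpha_0^{m_0},\alpha_0^{m_0-1}\alpha_{H_X}^{\,d_X-m_0})$ is an ideal, it suffices to prove the membership for the two basis elements $\theta_X$ and $\psi_X$ individually.

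\textbf{Step 3 (the rank-two core).} Each basis element $\chi\in\{\theta_X,\psi_X\}$ satisfies $\chi(\alpha_0)=\alpha_0^{m_0-1}\rho_\chi$ with $\rho_\chi$ homogeneous of degree $\deg\chi-m_0+1$, whose restriction to $\{\alpha_0=0\}$ is a scalar multiple of $\alpha_{H_X}^{\,\deg\chi-m_0+1}$. Consequently $\chi(\alpha_0)$ lies in $(\alpha_0^{m_0},\alpha_0^{m_0-1}\alpha_{H_X}^{\,d_X-m_0})$ as soon as one of the following holds: either $\deg\chi\ge d_X-1$, in which case $\alpha_{H_X}^{\,d_X-m_0}$ already divides the restriction of $\rho_\chi$; or $\chi(\alpha_0)\in(\alpha_0^{m_0})$, in which case the membership is immediate. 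Now by Lemma \ref{one}, passing from $(m+\bbmo)_X$ to $(m+\bbmo)_X+\delta_{H_0}$ raises exactly one exponent by one, and by construction $d_X$ is precisely this raised value, so the original degree of the affected basis element is $d_X-1$ and that of the other is the shared exponent $s$. A given $\chi$ with $\deg\chi\ge d_X-1$ is handled by the first alternative. If instead $\deg\chi<d_X-1$, then $\chi$ realizes the shared exponent, it is the strictly lowest-degree basis element so $D(\CA_X,(m+\bbmo)_X)_{\deg\chi}=\K\chi$, and since $\deg\chi$ is also the least exponent of $(\CA_X,(m+\bbmo)_X+\delta_{H_0})$, the nonzero element of that degree in $D(\CA_X,(m+\bbmo)_X+\delta_{H_0})$ must be a scalar multiple of $\chi$; hence $\chi\in D(\CA_X,(m+\bbmo)_X+\delta_{H_0})$ and $\chi(\alpha_0)\in(\alpha_0^{m_0})$, the second alternative. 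In all cases the required membership holds, which together with Steps 1 and 2 proves the lemma.

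\textbf{Main obstacle.} The delicate point is Step 3, namely matching the degrees of $\theta_X$ and $\psi_X$ against $d_X-1$ and extracting the correct $\alpha_{H_X}$-order of $\chi(\alpha_0)$ at $\alpha_0=0$. The cleanest route is the degree dichotomy above, which absorbs the awkward case of equal exponents (where the incremented basis element is not canonically determined) by observing that then both basis elements have transverse degree exactly $d_X-m_0$; the uniqueness of the lowest-degree piece of a rank-two module, used via Lemma \ref{one}, is what rules out the remaining borderline configurations and simultaneously justifies $d_X\ge m_0$ so that $B$ is genuinely a polynomial.
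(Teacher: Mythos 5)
The paper does not prove this lemma; it is imported verbatim from \cite[Lem.~3.4]{ATW2}, so there is no internal proof to compare with. Your argument is, in substance, a correct reconstruction of the original strategy: strip off the forced factor $\alpha_0^{m_0-1}$, reduce modulo $\alpha_0$, use pairwise coprimality of the forms $\overline{\alpha_{H_X}}$ to split the divisibility over the points $X\in\CA''$, and settle each $X$ by a degree count in the rank-two localization. Your degree dichotomy in Step 3 does cover every configuration (when the exponents of $(\CA_X,(m+\bbmo)_X)$ coincide, both local basis elements fall under the first alternative), and the identification of the restriction of $\rho_\chi$ with a power of $\overline{\alpha_{H_X}}$ is legitimate once the local basis is chosen with coefficients in $\K[\alpha_0,\alpha_{H_X}]$, as the structure of localizations (Definition \ref{def:Euler}, \cite[Prop.~2.1]{ATW2}) allows.

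Two points need more care than you give them. First, $D(\CA_X,(m+\bbmo)_X)_{\deg\chi}$ is not $\K\chi$ but $\K\chi\oplus\bigoplus_{i\ge 3}S_{\deg\chi}\partial_{x_i}$; your conclusion that the degree-$s$ element of $D(\CA_X,(m+\bbmo+\delta_{H_0})_X)$ is a nonzero scalar multiple of $\chi$ still holds, because both elements can be taken with coefficients in $\K[x_1,x_2]$ so that the inessential components vanish, but this is exactly the step where the argument would fail if carried out only at the level of graded pieces, so it should be made explicit. Second, the inequality $d_X\ge m_0$, without which $B$ is not a polynomial and the first alternative of your dichotomy is vacuous, is asserted twice but never proved; it does hold (if both local basis elements of $D(\CA_X,(m+\bbmo+\delta_{H_0})_X)$ are nonvanishing on $\alpha_0$ then both exponents are at least $m_0$, and otherwise Saito's criterion forces the exponents to be $\bigl(|(m+\bbmo+\delta_{H_0})_X|-m_0,\,m_0\bigr)$ and a short check shows the incremented one is $m_0$), and this verification belongs in the proof rather than in a closing remark about it being ``guaranteed in the setup''.
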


From Lemma \ref{lemma: ATW B Polynomial} we can derive the following criterion for the non-existence of universal derivations for a multiarrangement $(\CA,m+\bbmo)$.

\begin{corollary}\label{corollary: Bpolynomial corollary}
    With the notation as in Lemma \ref{lemma: ATW B Polynomial} let $\theta\in D(\CA,m+\bbmo)$ be homogeneous with $\deg \theta <\deg B$. Then $\theta\in D(\CA,m+\bbmo+\delta_{H_0})$ and in particular, $D(\CA,m+\bbmo+\delta_{H_0})_{\deg\theta}\neq (0)$.
\end{corollary}
\begin{proof}
    Thanks to Lemma \ref{lemma: ATW B Polynomial} we have $\theta(\alpha_0)\in(\alpha_0^{m_0},B)$, but from $\deg(\theta)<\deg(B)$ and $\deg(\alpha_0)=1$, we derive $\deg(\theta(\alpha_0))<\deg(B)$. This shows that we have $\theta(\alpha_0)\in \alpha_0^{m_0}\cdot S$ and therefore $\theta\in D(\CA,m+\bbmo+\delta_{H_0})$.
\end{proof}

\begin{lemma}\label{lemma: not critical criteria}
    Let $(\CA,m+\bbmo)$ be a free multiarrangement with $\rank(\CA)=3$ and exponents $\exp(\CA,m+\bbmo)=(d_1,d_2,d_3)$, where $d_1\leq d_2\leq d_3$. If there exists an $H\in\CA$ such that $\vert(\CA^H,(m+\bbmo+\delta_H)^*)\vert<d_2+d_3$, then $D(\CA,m+\bbmo+\delta_H)_{d_1}\neq(0)$.  
\end{lemma}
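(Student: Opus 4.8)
The plan is to reduce the statement to the $b$-polynomial criterion of Corollary~\ref{corollary: Bpolynomial corollary}, applied with $H_0 = H$. Since $(\CA, m+\bbmo)$ is free with a homogeneous basis of degrees $d_1 \le d_2 \le d_3$, its lowest-degree basis element furnishes a nonzero homogeneous $\theta \in D(\CA, m+\bbmo)_{d_1}$, and Saito's criterion (Theorem~\ref{Saito}) gives $d_1 + d_2 + d_3 = |m+\bbmo|$. By Corollary~\ref{corollary: Bpolynomial corollary} it then suffices to establish the single inequality $d_1 < \deg B$, where $B = B(\CA^H, (m+\bbmo+\delta_H)^*)$ is the polynomial of Lemma~\ref{lemma: ATW B Polynomial}; indeed this forces $\theta \in D(\CA, m+\bbmo+\delta_H)$, whence $D(\CA, m+\bbmo+\delta_H)_{d_1} \ni \theta \neq 0$.

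The core of the argument is therefore the computation of $\deg B$ in terms of the total multiplicity $N := |(\CA^H, (m+\bbmo+\delta_H)^*)|$ appearing in the hypothesis. Writing $m_0 := (m+\bbmo+\delta_H)(H) = (m+\bbmo)(H) + 1$, Lemma~\ref{lemma: ATW B Polynomial} gives
$$
\deg B = (m_0 - 1) + \sum_{X \in \CA^H}(d_X - m_0).
$$
I would first identify each $d_X$ with the degree of the $\alpha_0$-divisible generator $\psi_X$ in the special basis $\{\theta_X, \psi_X\}$ of the rank-two localization $(\CA_X, (m+\bbmo+\delta_H)_X)$ from Definition~\ref{def:Euler}. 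By Saito's criterion in rank two the two exponents of $(\CA_X,(m+\bbmo+\delta_H)_X)$ sum to $|(m+\bbmo+\delta_H)_X|$, and since $(m+\bbmo+\delta_H)^*(X) = \deg\theta_X$ by definition of the Euler multiplicity, this yields
$$
d_X = |(m+\bbmo+\delta_H)_X| - (m+\bbmo+\delta_H)^*(X).
$$

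Summing over $X$ is then a bookkeeping step: each $H' \in \CA \setminus \{H\}$ meets $H$ in exactly one flat $X = H \cap H' \in \CA^H$, so $\sum_{X}|(m+\bbmo+\delta_H)_X| = m_0 |\CA^H| + |m+\bbmo| - (m_0 - 1)$, while $\sum_X (m+\bbmo+\delta_H)^*(X) = N$ and $\sum_X m_0 = m_0|\CA^H|$. Substituting these into the displayed formula cancels the terms $m_0|\CA^H|$ and leaves
$$
\deg B = |m+\bbmo| - N = (d_1 + d_2 + d_3) - N.
$$
The hypothesis $N < d_2 + d_3$ reads $(d_2 + d_3) - N > 0$, so $\deg B = d_1 + \big((d_2+d_3) - N\big) > d_1$, which is exactly the inequality needed to invoke Corollary~\ref{corollary: Bpolynomial corollary} and conclude.

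The main obstacle is the local identification $d_X = |(m+\bbmo+\delta_H)_X| - (m+\bbmo+\delta_H)^*(X)$, that is, the verification that the non-shared exponent produced by Lemma~\ref{lemma: ATW B Polynomial} equals the degree $\deg\psi_X$ of the $\alpha_0$-divisible generator rather than that of $\theta_X$. This is a purely rank-two computation governed by Saito's criterion together with the fact that raising the multiplicity along $H$ by one moves exactly one rank-two exponent by one (Lemma~\ref{one}). Once this local point is in place, the global summation and the final inequality are routine.
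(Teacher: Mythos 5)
Your proposal is correct and takes essentially the same route as the paper: both reduce the claim to the inequality $d_1<\deg B$ via Corollary \ref{corollary: Bpolynomial corollary}, use the local Saito identity $\vert(m+\bbmo+\delta_H)_X\vert = d_X + (m+\bbmo+\delta_H)^*(X)$, and sum over $X\in\CA^H$ to get $\deg B = \vert m+\bbmo\vert - \vert(\CA^H,(m+\bbmo+\delta_H)^*)\vert = d_1+d_2+d_3-N>d_1$. The only cosmetic difference is that you evaluate $\deg B$ in closed form while the paper bounds $\deg B - d_1$ directly by an inequality chain.
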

\begin{proof}
    Since $\exp(\CA,m+\bbmo)=(d_1,d_2,d_3)$, we have $D(A,m+\bbmo)_{d_1}\neq (0)$. Due to Corollary \ref{corollary: Bpolynomial corollary} it is sufficient to show $d_1<\deg B$. By definition of $B$ and substituting $m_0=(m+\bbmo+\delta_H)(H)$, we have
    $$\deg  B=(m+\bbmo)(H)+\sum_{X\in\CA^H}\left(d_X-m_0\right),$$
    where $d_X$ and $B$ are as in Lemma \ref{lemma: ATW B Polynomial}. By the definitions of the Euler multiplicity (Definition \ref{def:Euler}) and $d_X$ we have $\vert(m+\bbmo+\delta_H)_X\vert=d_X+(m+\bbmo+\delta_H)^*(X)$. Therefore,  
    \begin{align*}
    \sum_{X\in\CA^H}d_X 
    & =\left(\sum_{X\in\CA^H}\vert(m+\bbmo+\delta_H)_X\vert\right)-\vert(m+\bbmo+\delta_H)^*\vert.
      \end{align*}
    Now utilize these equations to derive
    $$\deg B - d_1 = (m+\bbmo)(H)+\sum_{X\in\CA^H}\left(d_X-m_0\right)-d_1$$
    $$=(m+\bbmo)(H)+\left(\sum_{X\in\CA^H}d_X\right)-\vert\CA^H\vert\cdot m_0-d_1$$
    $$=(m+\bbmo)(H)+\left(\sum_{X\in\CA^H}\vert (m+\bbmo+\delta_H)_X\vert \right)-\vert(m+\bbmo+\delta_H)^*\vert-\vert\CA^H\vert\cdot m_0-d_1$$
    $$=(m+\bbmo)(H)+\sum_{X\in\CA^H}\left(\vert (m+\bbmo+\delta_H)_X\vert - m_0\right)-\vert(m+\bbmo+\delta_H)^*\vert-d_1.$$
    Finally, utilize $\vert (m+\bbmo)\vert=d_1+d_2+d_3$ and $\vert(m+\bbmo+\delta_H)^*\vert<d_2+d_3$ to derive
    $$(m+\bbmo)(H)+\sum_{X\in\CA^H}\left(\vert (m+\bbmo+\delta_H)_X\vert - m_0\right)-\left(\vert(m+\bbmo+\delta_H)^*\vert+d_1\right)$$ 
    $$>(m+\bbmo)(H)+\sum_{X\in\CA^H}\left(\vert (m+\bbmo+\delta_H)_X\vert - (m+\bbmo+\delta_H)(H)\right)-\vert (m+\bbmo)\vert=0.$$
    Consequently, we have shown $d_1<\deg B$, as desired.
\end{proof}

Next, we study the case when $\A$ is  supersolvable with $\CA_2=\CA(A_2)$ and $|\A\backslash\A_2|>3$.

\begin{proposition}
    Let $Q(\CA)=xy(x-y)\prod_{1\leq i\leq h}(z-a_ix)$ with $h\coloneqq|\A\backslash\A_2|>3$ and $m+\bbmo$ a multiplicity on $\CA$ such that $(\CA,m+\bbmo)$ is supersolvable for the filtration $\{\ker x\}\subset\A_2 = \CA(A_2)\subset\A_3 = \CA.$ Then there does not exist any universal derivation for $(\CA,m+\bbmo)$.
\end{proposition}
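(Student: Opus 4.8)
The plan is to argue by contradiction. Suppose an $m$-universal derivation $\theta\in D(\CA,m+\bbmo)$ exists. By the remark following Definition \ref{definiton: Universal Derivations} this forces $(\CA,m)$ to be free with all exponents equal, say $\exp(\CA,m)=(d,d,d)$. I would compute these exponents through the supersolvable structure and show that their equality is incompatible with the multiplicity inequality that the pencil $\{z-a_ix=0\}$ imposes at its center once $h>3$.

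First I would fix the geometry. Write $H_i'\coloneqq\ker(z-a_ix)$ for $1\le i\le h$ and set $X_0\coloneqq\{x=0,\,z=0\}$, the $y$-axis. Each $H_i'$ contains $X_0$, and so does $\ker x$, whereas $\ker y$ and $\ker(x-y)$ do not; hence the localization $\CA_{X_0}=\{\ker x,H_1',\dots,H_h'\}$ is a pencil of $h+1\ge 5$ planes. Applying the conditions of Theorem \ref{theorem: supersolvable exponents} at the flat $X_0$ with $H''=\ker x$, the alternative $\CA_{X_0}=\{H',H''\}$ fails, so the multiplicity inequality must hold; since every $H_i'$ passes through $X_0$ it reads
\[
(m+\bbmo)(\ker x)\ \ge\ \sum_{i=1}^{h}(m+\bbmo)(H_i')-1.
\]
Writing $s\coloneqq\sum_{i=1}^{h}m(H_i')$ and passing from $m+\bbmo$ to $m$ this becomes $m(\ker x)\ge s+h-2$. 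The remaining flats $H_i'\cap\ker y$ and $H_i'\cap\ker(x-y)$ have rank-two localizations $\{H_i',H''\}$, so they impose no constraint; the same computation also shows that lowering $m+\bbmo$ to $m$ preserves supersolvability for the given filtration, so that Theorem \ref{theorem: supersolvable exponents} applies to $(\CA,m)$ as well.

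Next I would compute $\exp(\CA,m)$. As $(\CA,m)$ is supersolvable with $\CA_2=\CA(A_2)$, Theorem \ref{theorem: supersolvable exponents} gives $\exp(\CA,m)=(e_1,e_2,s)$, where $\exp(\CA_2,m_2)=(e_1,e_2)$. Equality of all three exponents forces $e_1=e_2=s$, hence $|m_2|=e_1+e_2=2s$. By Wakamiko's classification (Theorem \ref{theorem: Wakamiko A2 classification}) an unbalanced multiplicity on three lines has strictly distinct exponents, so $e_1=e_2$ forces $m_2$ to be balanced; in particular $m(\ker x)\le\tfrac12|m_2|=s$. Combining this with $m(\ker x)\ge s+h-2$ yields $h\le 2$, contradicting $h>3$. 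Thus no $m$-universal derivation exists.

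The main obstacle is the correct reading of the supersolvability inequality at $X_0$: one must recognize that, unlike the $A_3$ situation of Proposition \ref{proposition: Multieuler for supersolvable A3}, the extra hyperplanes here form a single pencil, so the right-hand sum ranges over all $h$ of them rather than over just two. This is exactly what makes the lower bound $m(\ker x)\ge s+h-2$ grow linearly in $h$ and collide with the balancedness bound $m(\ker x)\le s$. A minor point to check with care is the claim that reducing $m+\bbmo$ to $m$ keeps the filtration supersolvable, so that the exponent formula may legitimately be applied to $(\CA,m)$. As an alternative route, the section's machinery suggests instead feeding the size of $\CA_{X_0}$ into Lemma \ref{lemma: not critical criteria} with $H=\ker x$ to violate $d_1$-criticality of $(\CA,m+\bbmo)$ directly, where $d_1=\min\exp(\CA,m+\bbmo)$, and then invoking Theorem \ref{theorem: alternative universal criterion} together with Corollary \ref{corollary: if universal then critical}; this would require estimating the Euler restriction $|(\CA^{\ker x},(m+\bbmo+\delta_{\ker x})^*)|$, which I expect to be the more computational of the two approaches.
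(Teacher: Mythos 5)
Your argument is correct, and it takes a genuinely different (and more elementary) route than the paper. The paper's proof also begins by extracting the exponent constraints from Theorem \ref{theorem: supersolvable exponents} and the equal-exponents requirement on $(\CA,m)$, but it then completes the contradiction through criticality: it picks $H\in\CA\setminus\CA_2$, computes the Euler restriction $|(m+\bbmo+\delta_H)^*|=|m_2+\bbmo|$, and invokes Lemma \ref{lemma: not critical criteria} (i.e.\ the $B$-polynomial bound of Lemma \ref{lemma: ATW B Polynomial}) to conclude $D(\CA,m+\bbmo+\delta_H)_{d_1}\neq(0)$, contradicting the $(d_1)$-criticality demanded by Theorem \ref{theorem: alternative universal criterion} --- essentially the "alternative route" you sketch at the end, though the paper restricts to a hyperplane outside $\CA_2$ rather than to $\ker x$. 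You instead show that the weaker necessary condition already fails: the supersolvability inequality at the pencil point $X_0$ gives $m(\ker x)\ge s+h-2$, while equal exponents force $\exp(\CA_2,m_2)=(s,s)$, hence balancedness of $m_2$ by Theorem \ref{theorem: Wakamiko A2 classification} and $m(\ker x)\le s$, so $h\le 2$. This avoids the Euler restriction and criticality machinery entirely, at the cost of nothing: your checks that only the flat $X_0$ carries a nontrivial localization, and that passing from $m+\bbmo$ to $m$ preserves the supersolvability conditions (since $m(\ker x)\ge s+h-2\ge s-1$), are exactly the points that need verifying, and they hold. The one cosmetic difference is that your bound even rules out $h=3$, which the proposition does not claim; this is harmless.
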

\begin{proof}
    Since $(\CA,m+\bbmo)$ is supersolvable, we utilize Theorems \ref{theorem: Wakamiko A2 classification} and \ref{theorem: supersolvable exponents} to derive 
    
    $$\exp(\CA,m+\bbmo)=\left(\left\lfloor\frac{\vert m_2+\bbmo\vert}{2}\right\rfloor,\left\lceil\frac{\vert m_2+\bbmo\vert}{2}\right\rceil,\vert m+\bbmo\vert-\vert m_2+\bbmo\vert\right).$$ 
    
    If there exists a universal derivation, then all exponents of $(\CA,m)$ have to be equal. This shows that $m_2+\bbmo$ must be a peak point for $\CA(A_2)$ and therefore $\vert m_2+\bbmo\vert$ is odd. This condition on $\exp(\CA,m)$ also forces $\frac{\vert m_2+\bbmo\vert-3}{2}=\vert m+\bbmo\vert-\vert m_2+\bbmo\vert-h$. Hence

    $$\exp(\CA,m+\bbmo)=\left(\frac{\vert m_2+\bbmo\vert-1}{2},\frac{\vert m_2+\bbmo\vert+1}{2},\frac{\vert m_2+\bbmo\vert-3}{2}+h\right).$$
    
    Let $H\in\CA_3\backslash\CA_2$. Due to Theorem \ref{theorem: alternative universal criterion} and Lemma \ref{lemma: not critical criteria} it suffices to show 
    
    $$\vert (m+\bbmo+\delta_H)^* \vert < \frac{\vert m_2+\bbmo\vert+1}{2} + \frac{\vert m_2+\bbmo\vert-3}{2}+h = \vert m_2+\bbmo\vert+h-1.$$
    
    We have $\vert (m+\bbmo+\delta_H)^*\vert=\vert m_2+\bbmo\vert$ since for all localizations of rank two we either have $(m+\bbmo)(\ker x)\geq (\sum_{H\in(\CA\backslash\CA_2)}(m+\bbmo)(H))-1$ due to the supersolvability of $(\CA,m+\bbmo)$ or $\vert \CA_X\vert=2$. Since $h\geq 3$, we derive $\vert (m+\bbmo+\delta_H)^*\vert=\vert m_2+\bbmo\vert < \vert m_2+\bbmo\vert+h-1$ as desired.    
\end{proof}

In our final result we investigate a supersolvable arrangement $(\CA,m+\bbmo)$, where $\rank(\CA)=3$ and $\CA_2=\CA(B_2)$ in the supersolvable filtration $$(\CA_1,m_1+\bbmo)\subset(\CA_2,m_2+\bbmo)\subset(\CA_3,m+\bbmo)=(\CA,m+\bbmo).$$
\begin{proposition}
Let $\CA=\CA(B_3)$ be the Coxeter arrangement of type $B_3$ 
with supersolvable filtration $$\{\ker x\}\subset\{\ker x,\ker y,\ker(x+y),\ker(2x+y)\}\subset \CA.$$
Let $$Q(\CA,m+\bbmo)=x^ay^b(x+y)^c(2x+y)^dz^e(y+z)^f(x+y+z)^g(2x+y+z)^h(2x+2y+z)^i.$$ 
Then there does not exist a universal derivation for $(\CA,m+\bbmo)$.
\end{proposition}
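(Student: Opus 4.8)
The plan is to argue by contradiction through Theorem~\ref{theorem: alternative universal criterion}. Suppose $\theta\in D(\CA,m+\bbmo)$ is $m$-universal. Then $(\CA,m)$ is free with three equal exponents and $D(\CA,m+\bbmo)$ is $(\deg\theta)$-critical, so it suffices to exhibit one hyperplane $H_0$ with $D(\CA,m+\bbmo+\delta_{H_0})_{\deg\theta}\neq(0)$. First I would pin down all exponents. Lowering every multiplicity by one preserves each inequality of Theorem~\ref{theorem: supersolvable exponents}, so $(\CA,m)$ is again supersolvable for the given filtration and that theorem yields $\exp(\CA,m)=(\exp(\CA_2,m_2),\,e+f+g+h+i-5)$. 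Equality of the three exponents forces $\exp(\CA_2,m_2)=(p',p')$ (hence $|m_2|=2p'$ is even and $m_2$ is balanced) together with $p'=e+f+g+h+i-5$; in particular $\deg\theta=p'+1$.

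Next I would read off $\exp(\CA,m+\bbmo)$. Theorem~\ref{theorem: supersolvable exponents} gives $\exp(\CA,m+\bbmo)=(\exp(\CA_2,m_2+\bbmo),\,e+f+g+h+i)=(p,q,p'+5)$ with $p\le q$ and $p+q=|m_2+\bbmo|=2p'+4$. By Corollary~\ref{corollary: if universal then critical} the degree $\deg\theta=p'+1$ is the smallest exponent of $(\CA,m+\bbmo)$, which equals $p$; hence $p=p'+1$ and $q=p'+3$. Thus $\exp(\CA,m+\bbmo)=(p'+1,p'+3,p'+5)$, so $\Delta(m_2+\bbmo)=2$, consistent with Theorem~\ref{limit of rank 2 exponents}.

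Now I would restrict to $H_0=\ker z\in\CA\setminus\CA_2$ and prepare to apply Lemma~\ref{lemma: not critical criteria}. A direct computation shows $\CA^{H_0}=\{\ker x,\ker y,\ker(x+y),\ker(2x+y)\}$, with exactly four rank-two localizations on $H_0$; their unique $\CA_2$-hyperplanes are $\ker x,\ker y,\ker(x+y),\ker(2x+y)$ and the localizations have sizes $2,3,4,3$ respectively. For the size-two localization $\{\ker x,\ker z\}$ the Euler multiplicity is exactly $a$. For each remaining localization $X$, writing the part of a basis lying in $z\cdot\Der_S$ as $z\eta$ with $\eta\in D(\CA_X,(m+\bbmo)_X)$ gives $\mu^*(X)\le$ the larger rank-two exponent of $(\CA_X,(m+\bbmo)_X)$; since the supersolvability inequalities $b\ge e+f-1$, $c\ge e+g+i-1$, $d\ge e+h-1$ make the $\CA_2$-hyperplane dominant to within one in $(\CA_X,(m+\bbmo)_X)$, Theorems~\ref{theorem: Wakamiko A2 classification} and~\ref{limit of rank 2 exponents} bound this larger exponent by $(\text{the }\CA_2\text{-multiplicity})+1$. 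Summing over the four flats yields $|(m+\bbmo+\delta_{H_0})^*|\le a+b+c+d+3=|m_2+\bbmo|+3=2p'+7$.

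Since $d_2+d_3=(p'+3)+(p'+5)=2p'+8>2p'+7\ge|(m+\bbmo+\delta_{H_0})^*|$, Lemma~\ref{lemma: not critical criteria} would give $D(\CA,m+\bbmo+\delta_{H_0})_{p'+1}\neq(0)$, contradicting the $(p'+1)$-criticality forced by Theorem~\ref{theorem: alternative universal criterion}. The hard part is the Euler-multiplicity estimate: one must compute $\CA^{H_0}$, identify the dominant hyperplane in each localization from the supersolvability inequalities, and control the rank-two exponents tightly. The decisive point is that the localization through $\ker x$ has size two and hence contributes no surplus, so the total correction is $+3$ rather than $+4$; this is exactly what keeps the final inequality strict and rules out a universal derivation.
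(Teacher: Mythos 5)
Your proof is correct, but it takes a genuinely different route from the paper's. The paper never leaves the level of arithmetic: it observes that summing suitable subsets of the six supersolvability inequalities yields $\tfrac{a+b+c+d}{2}+2\ge\tfrac{3e+i}{2}+f+g+h$ and $\tfrac{a+b+c+d}{2}+2\ge\tfrac{e+3i}{2}+f+g+h$, so that (splitting on $e\ge i$ versus $e\le i$) one always has $\tfrac{a+b+c+d}{2}+2\ge e+f+g+h+i$; this directly contradicts the identity $\tfrac{a+b+c+d-4}{2}=e+f+g+h+i-5$ forced by the requirement that $\exp(\CA,m)$ consist of three equal exponents, so no appeal to criticality, Euler restrictions, or Lemma~\ref{lemma: not critical criteria} is needed. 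You instead grant the equal-exponent condition, pin down $\exp(\CA,m+\bbmo)=(p'+1,p'+3,p'+5)$ via Corollary~\ref{corollary: if universal then critical}, and then defeat $(p'+1)$-criticality at $H_0=\ker z$ by bounding the Euler restriction: your identification of $\CA^{H_0}$ (four flats of local sizes $2,3,4,3$), the factorization $\psi_X=z\eta$ with $\eta\in D(\CA_X,(m+\bbmo)_X)$ giving $\mu^*(X)\le\max\exp(\CA_X,(m+\bbmo)_X)$, and the bound $\max\exp\le(\CA_2\text{-multiplicity})+1$ from the inequalities $b\ge e+f-1$, $c\ge e+g+i-1$, $d\ge e+h-1$ all check out, and the size-two flat through $\ker x$ is indeed what makes $2p'+7<2p'+8$ strict. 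This is exactly the strategy the paper reserves for the preceding proposition (the $xy(x-y)\prod(z-a_ix)$ family), transplanted to $B_3$; it is longer but arguably more structural, since the final contradiction only consumes the three inequalities involving $e$ together with the restriction geometry, whereas the paper's shortcut consumes all six inequalities at once. One small presentational caveat: for the four-line localization at $\ker(x+y)\cap\ker z$ Theorem~\ref{theorem: Wakamiko A2 classification} does not literally apply ($|\CA_X|=4$), so you should lean only on Theorem~\ref{limit of rank 2 exponents} in the balanced case and on the explicit exponents of unbalanced $2$-multiarrangements otherwise, noting that the dominant hyperplane in the unbalanced case can only be $\ker(x+y)$; both give $\max\exp\le c+1$ as you claim.
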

\begin{proof}
\noindent Let $m+\bbmo$ such that $(\CA,m+\bbmo)$ is supersolvable with the showcased filtration. Then, the following inequalities need to be satisfied:\\
$a\geq f+g+h-1$, $b\geq e+f-1$, $b\geq h+i-1$, $c\geq e+g+i-1$, $d\geq e+h-1$, and $d\geq f+i-1$.\\
We have $\vert m_2+\bbmo\vert=a+b+c+d$ and $\vert m+\bbmo\vert- \vert m_2+\bbmo\vert=e+f+g+h+i$. From the inequalities above, we derive
\begin{itemize}
    \item $a+b+c+d \geq (f+g+h-1)+(e+f-1)+(e+g+i-1)+(e+h-1)$\\
    $\iff a+b+c+d \geq 3 e + 2 f + 2 g + 2 h + i - 4$\\
    $\iff\frac{a+b+c+d}{2}+2 \geq \frac{3 e + i}{2}+f+g+h$.\\
    \item $a+b+c+d \geq (f+g+h-1)+(h+i-1)+(e+g+i-1)+(f+i-1)$\\
    $\iff a+b+c+d \geq  e + 2 f + 2 g + 2 h + 3 i -4$\\
    $\iff\frac{a+b+c+d}{2}+2 \geq \frac{3 i + e}{2}+f+g+h$.\\
\end{itemize}
If $e\geq i$, then use the first inequality and derive
$$\frac{a+b+c+d}{2}+2\geq \frac{3 e + i}{2}+f+g+h \geq e+f+g+h+i. $$
If $e\leq i$, then use the second inequality and derive
$$\frac{a+b+c+d}{2}+2\geq \frac{e + 3 i}{2}+f+g+h \geq e+f+g+h+i. $$
If $m$ had all equal exponents, we would have 
$$\frac{a+b+c+d-4}{2}=e+f+g+h+i-5$$
$$\iff \frac{a+b+c+d}{2}+2=e+f+g+h+i-1.$$ This contradicts $$\frac{a+b+c+d}{2}+2 \geq e+f+g+h+i.$$
This completes the proof.
\end{proof}

\subsection{Totally free arrangements and universal derivations}
We finish this section by presenting a totally free arrangement for which no non-trivial universal derivation exists. 

\begin{proof}[Proof of Theorem \ref{theorem: freeness does not imply universal}]
    Let $\A$ be the real arrangement given by $Q(\CA)=xy(x-y)(x-\pi y)$. Maehara introduced this arrangement in \cite[Cor.~3.10]{Ma} and showed that for a balanced multiplicity $m\not = \bbmo$ on $\CA$ we have \begin{enumerate}
        \item $\Delta(m)=0$, if $\vert m\vert$ is even, and 
        \item $\Delta(m)=1$, if $\vert m\vert$ is odd.
    \end{enumerate}
    Since $\rank(\CA)=2$ the multiarrangement $(\A,m)$ is totally free. As $\vert \CA\vert=4$, Theorem \ref{theorem: rank two universal criterion} shows that there does not exist a universal derivation (other than $\theta_E$) for $\CA$.    
\end{proof}

\section{Open problems}
In addition to Problem \ref{problem: not free euler} we list further problems in this section. We derived new examples of universal derivations through our new criteria, namely Theorems \ref{theorem: alternative universal criterion} and \ref{theorem: rank two universal criterion}. However, these results still require knowledge of the exponents.  
\begin{problem}
    Is there a criterion that guarantees the existence of universal derivations for a given multiarrangement $(\CA,m)$ without a priori knowledge of $\exp(\CA,m)$?
\end{problem}

It was shown by Terao in \cite{T80} that the freeness of $(\CA,m)$ implies the freeness of $(\CA_X,m_X)$ for an arbitrary $X\in L(\CA)$. This motivates the following question.

\begin{problem}
    Does the existence of a universal derivation $\theta\in D(\CA,m)$ imply the existence of a universal derivation $\theta_X\in D(\CA_X,m_X)$?
\end{problem}

As it turns out for numerous supersolvable arrangements $\CA$ one can define a multiplicity $m$ such that $(\CA,m)$ is supersolvable and admits a universal derivation. This suggests that supersolvability is a requirement for an universal derivation to exist. We emphasize that this is not the case, i.e., see  \cite{ATW} and \cite{AY}.

\medskip

\bibliographystyle{amsalpha}

\begin{thebibliography}{Z}
 \bibitem{A1} T. Abe, 
 The stability of the family of $A_2$-type arrangements. \textit{J. Math. Kyoto Univ.} 
 \textbf{46} (2006),  no. 3, 617--639.

\bibitem{A07} T. Abe,
Free and non-free multiplicity on the deleted $A_3$ arrangement. \textit{Proceedings of the Japan Academy Series A: Mathematical Sciences}, \textbf{83} (2007), No. 7, 99-103.

\bibitem{A5} T. Abe, 
Chambers of $2$-affine arrangements and freeness of $3$-arrangements 
\textit{J. Alg. Combin.} 
\textbf{38} (2013), no.1,  65--78.

 \bibitem{ANN} T. Abe, K. Nuida and Y. Numata, 
 Signed-eliminable graphs and free multiplicities on the braid arrangement. 
\textit{J. London Math. Soc.} 
\textbf{80} (2009), no. 1, 121--134.

\bibitem{AN} T. Abe and Y. Numata, 
 Exponents of $2$-multiarrangements and multiplicity lattices. 
 \textit{J. Alg. Combin.} 
\textbf{35} (2012), no.1,  1--17.

\bibitem{ASRY} T. Abe, C. Stump, G. Röhrle, and M. Yoshinaga, 
A Hodge filtration of logarithmic vector fields for well-generated complex reflection groups, J. Comb. Algebra {\bf 8} (2024), no.~3-4, 251--278; 
  
\bibitem{ATW} T. Abe, H. Terao and A. Wakamiko, Equivariant multiplicities 
of Coxeter arrangements and 
invariant bases. 
\textit{Adv. Math}. \textbf{230} (2012), no. 4--6, 2364--2377.

 \bibitem{ATW2} T. Abe, H. Terao and M. Wakefield, The Euler multiplicity and 
addition-deletion theorems for multiarrangements. 
\textit{J. London Math. Soc.} \textbf{77} (2008), no. 2, 335--348.

 \bibitem{AY} T. Abe and M. Yoshinaga, Coxeter multiarrangements with quasi-constant 
 multiplicities. \textit{J. Algebra} \textbf{322} (2009), no. 8, 2839--2847.

\bibitem{DFMS}
M. DiPasquale, 
C. Francisco, J. Mermin, and J. Schweig, 
Free and non-free multiplicities on the A3 arrangement. 
\textit{J. Algebra} 
\textbf{544} (2020), 498--532. 

\bibitem{DW}
M. DiPasquale and M. Wakefield,
Free multiplicities on the moduli of $X_3$, 
\textit{J. Pure Appl. Algebra} 
\textbf{222} (2018), no. 11, 3345--3359.

\bibitem{Ma} S. Maehara, \textit{Exponents of $2$-multiarrangements and Wakefield-Yuzvinsky matrices}, \href{https://arxiv.org/abs/2509.16569}{\tt arXiv:2509.16569}.

 \bibitem{OT} P. Orlik and H. Terao, \textit{Arrangements of hyperplanes}.
 Grundlehren der Mathematischen Wissenschaften, 
 \textbf{300}. Springer-Verlag, Berlin, 1992.

 \bibitem{Sa} {K. Saito},
 Theory of logarithmic differential forms and logarithmic vector fields. 
 \textit{J. Fac. Sci. Univ. Tokyo Sect. IA  Math}. 
 \textbf{27} (1980), 265--291. 

 \bibitem{T80} H. Terao, Arrangements of hyperplanes and their freeness I. \textit{J. Fac. Sci. Univ. Tokyo Sect. IA Math.}
 \textbf{27} (1980), 293--320. 

\bibitem{T02} H. Terao, Multiderivations of Coxeter arrangements. \textit{Invent. Math. }
 \textbf{148} (2002), 659--674. 

\bibitem{W07} A. Wakamiko, On the Exponents of $2$-Multiarrangements. 
\textit{Tokyo J. Math.} \textbf{30} (2007), no. 1, 99--116.

\bibitem{W10} A. Wakamiko, Bases for the derivation modules of two-dimensional multi-Coxeter arrangements and universal derivations. \textit{Hokkaido Math. J.} \textbf{40} (2011), no. 3, 375--392.

 \bibitem{Y0} M. Yoshinaga, The primitive derivation and freeness of multi-Coxeter arrangements. 
 \textit{Proc. Japan Acad. Ser. A} \textbf{78} (2002), no. 7,  116--119.

 \bibitem{Y2} M. Yoshinaga, Characterization of a free arrangement and
 conjecture of
 Edelman and Reiner. \textit{Invent. Math.} \textbf{157} (2004), no. 2,
 449--454.

 \bibitem{Y3} M. Yoshinaga, On the freeness of 3-arrangements. 
 \textit{Bull. London Math. Soc.} \textbf{37} (2005), no. 1, 126--134. 

 \bibitem{Z}
 G. M. Ziegler, 
 Multiarrangements of hyperplanes and their freeness. in
 {\it Singularities} (Iowa City,
 IA, 1986), 345--359, Contemp. Math., {\bf 90}, Amer. Math. Soc.,
 Providence, RI, 1989.

 
\bibitem{Z2}
G. M. Ziegler, Combinatorial construction of logarithmic differential forms.
\textit{Adv. Math}. \textbf{76} (1989), 116--154.
\end{thebibliography}
\newcommand{\etalchar}[1]{$^{#1}$}
\providecommand{\bysame}{\leavevmode\hbox to3em{\hrulefill}\thinspace}
\providecommand{\MR}{\relax\ifhmode\unskip\space\fi MR }

\end{document}